\newcommand{\N}{\mathbb{N}}
\newcommand{\R}{\mathbb{R}}
\newcommand{\Z}{\mathbb{Z}}
\newcommand{\C}{\mathbb{C}}
\newcommand\ton{\widehat}
\newcommand{\lf}{\left}
\newcommand{\rg}{\right}
\newcommand{\re}{\operatorname{Re}}
\newcommand\tot{\to0 \text{ as } t\to\infty}
\newcommand{\ap}{\mathcal{AP}}
\def\sgn{\operatorname{sgn}}
\newtheorem{Theorem}{Theorem}[section]
\newtheorem{Remark}[Theorem]{Remark}
\newtheorem{Lemma}[Theorem]{Lemma}
\newtheorem{Corollary}[Theorem]{Corollary}
\newtheorem{Proposition}[Theorem]{Proposition}
\newtheorem{Definition}[Theorem]{Definition}
\newtheorem{Example}[Theorem]{Example}
\numberwithin{equation}{section}
\begin{document}

	\title{On a Poincar\'e-Perron problem for high order differential equation}
	\date{\today}
	
	\author[H. Bustos]{H. Bustos}	
	\address{H. Bustos
	\newline\indent Centro de Docencia de Ciencias B\'asicas para Ingenier\'ia
	\newline\indent Facultad de Ciencias de la Ingenier\'ia
	\newline\indent Universidad Austral,
	\newline\indent	Campus Miraflores,
	\newline\indent Valdivia, Chile}
	\email{harold.bustos@uach.cl}
	
\author[P. Figueroa]{P. Figueroa}	
	\address{P. Figueroa
	\newline\indent Instituto de Ciencias F\'isicas y Matem\'aticas
	\newline\indent Facultad de Ciencias
\newline\indent Universidad Austral
\newline\indent Campus Isla Teja
\newline\indent Valdivia, Chile}
\email{pablo.figueroa@uach.cl}
	
	\author[M. Pinto]{M. Pinto.}	
	\address{M. Pinto
	\newline\indent Departamento de Matem\'aticas,
	\newline\indent Universidad de Chile,
\newline\indent	Las Palmeras 3425, Casilla 653, 
\newline\indent Santiago, Chile}
\email{pintoj@uchile.cl}

		\maketitle
	\bigskip
	\bigskip

	
	\begin{abstract}
		We address asymptotic formulae for the classical Poincar\'e-Perron problem of linear differential equations with almost constant coefficients in a half line $[t_0,+\infty)$ for high order equation $n\ge 5$ and some $t_0\in\mathbb{R}$. By using a scalar nonlinear differential equation of Riccati type of order $n-1$, we recover Poincar\'e's and Perron's results and provide asymptotic formulae with the aid of Bell's polynomials. Furthermore, we obtain some weaker versions of Levinson, Hartman-Wintner and Harris-Lutz type Theorems without the usual diagonalization process. For an arbitrary $n\ge 5$, these are corresponding versions to known results for cases $n=2,3$ and $4$.
	\end{abstract}
	
	\section{Introduction}

Higher order differential equations have been intensively and extensively studied during the last decades. They have been attracted great attention due to their possible applications to different areas such as theoretical physics, population dynamics, biology, ecology, etc., see for example \cite{F,Om2,Om3,Ol,PW1, PW2,PhPT}. Consider  $a_0,\dots,a_{n-1}\in\mathbb{C}$ and certain functions $r_0(t),\dots,r_{n-1}(t)$, a classical Poincar\'e-Perron problem of linear differential equations consists in
\begin{equation}\label{poinca}
	 y^{(n)}(t)+\sum_{i=0}^{n-1} (a_i+r_i(t))y^{(i)}(t)=0,\ \ \ \text{in $[t_0,+\infty)$}
\end{equation}
for some $t_0\in\mathbb{R}$, where functions $r_i$'s are small in some sense and $n\ge 2$. For simplicity, given a $(n+1)$-tuple $c=(c_0,\dots,c_{n})$ of complex numbers,  we denote $P(c;x):=\sum_{i=0}^{n}c_ix^{i}$, so that $P(a;x)=x^n+\sum_{i=0}^{n-1} a_i x^{i}$ is the characteristic polynomial of \eqref{poinca} with $r_i\equiv0$, $i=0,\dots,n-1$, where $a=(a_0,\dots ,a_{n-1}, 1)$, $a_i$'s are the complex numbers associated to the equation \eqref{poinca} and $a_n=1$. Precisely, for any $n\ge 2$, Poincar\'e \cite{Po} proved in 1885 the existence of a solution $y$ to \eqref{poinca} such that $y'(t)/y(t)$ converges as $t\to +\infty$, assuming that the roots $\lambda_1,\dots,\lambda_n$ of the polynomial $P(a;x)$ have distinct real part, $r_i$'s are continuous on $[t_0,+\infty)$ and $r_i(t)\to 0$ as $t\to +\infty$ for all $i=0,\dots,n-1$. At the beginning of the twentieth century, Perron \cite{Perr} improved this result, under the same hypothesis, assuring the existence of $n$ solutions $y_1, \dots, y_n$ with the same asymptotic behavior, namely, logarithmic derivative $y_i'(t)/y_i(t)$ converges to $\lambda_i$ as $t\to +\infty$. However, there is not a more precise formula for the asymptotic behavior of solutions in either case.

\medskip
Hartman in \cite[Th. 17.2]{Hartman} gives asymptotic estimates for a fundamental system of solutions to \eqref{poinca} if the functions $|r_i(t)|t^q$ are integrable on $[0,+\infty)$ for some $q\ge0$ and the real part of roots of $P(a;x)$ are distinct. Similar results have been obtained by \u Sim\u sa in \cite{S84,S85} under weaker assumptions on possibly not absolutely convergence of integrals of $r_i(t)t^q$. Trench in \cite{Trench86} proved a similar result to that in \cite{S85} under mild integral smallness condition. Later, \u Sim\u sa in \cite{S88} recovered and extended a Perron's result under weighted integrals involving $r_i$'s assumptions, obtaining estimates similar to \eqref{yjioyi} for a fixed $\lambda_i$, but without showing a more precise formula. Prevatt \cite{Prevat} using exponential dichotomy applies the results to spectral analysis and index theory which is very important in Schr\"odinger equation and similar ones. See also \cite[chapter 1]{Kiguradze} for related results.

\medskip
Another perturbations have been studied, for instance $r_i\in L^p$ for some $p\ge 1$, in systems of linear differential equations, which can be applied to this equation. These results are due to Levinson \cite[Th. 1.3.1]{East} for $p=1$, to Hartman-Wintner \cite[Th. 1.5.1]{East} for $p\in (1,2]$ and to Harris-Lutz \cite{Hl} for $p>2$.

\medskip

On the other hand, linear perturbations of the equation $y^{(n)}=0$, namely, equation \eqref{poinca} with $a_0=a_1=\cdots=a_{n-1}=0$, are studied for example in \cite{Nevai,Prevat} in the context of asymptotic integration. These kind of results depends on the fact that, the related linear system $Y'(t)=A(t)Y(t)$ possesses exponential dichotomy under some assumptions. Besides, in two works \cite{Trench76,Trench84}, Trench obtained the same results as Hartman \cite[Th. 17.1]{Hartman} under weaker assumptions (conditionally integrable): solutions behaves like polynomials. These results have been extended to the case $a_0=a_1=\cdots=a_{n-k-1}=0$ for some fixed $1\le k \le n-1$ in \cite{Trench88}; they extended also \cite[Th. 17.3]{Hartman}.

	 \medskip

	Problem \eqref{poinca} has been also studied for cases $n=2,\; 3$ and $4$ in several works \cite{CHP1,CHP2,FP1,FP2,FP3}, recovering Poincar\'e and Perron's results and providing asymptotic formulae. In order to study \eqref{poinca} a key step is to make the following change of variables $z=y^\prime/y-\lambda$\,, where $\lambda$ denotes a fixed root of $P(a;x)$. Then, the aim is to finding such a function $z$ depending on $r_i$'s with property $z(t)\tot$. This change is equivalent to consider
	\begin{equation}\label{yeilz}
	y(t)=\exp\left(\int_{t_0}^t [\lambda+z(s)]\,ds\right)\,, \quad t\ge t_0\,.
	\end{equation}
	Hence, replacing $y$ into equation \eqref{poinca} an equation for $z$ is obtained. For instance, if $n=2$ then $z$ satisfies a Riccati equation and asymptotic formulae for solutions to \eqref{poinca} were proved in \cite{FP1} . For $n\ge 3$ we say that $z$ satisfies a Riccati type equation and asymptotic formulae for solutions to \eqref{poinca} were proved in \cite{FP2,FP3} when $n=3$ and in \cite{CHP1,CHP2} when $n=4$.  Thus, inspired by these results our aim is to perform the same procedure for an arbitrary $n\ge 5$ and to obtain asymptotic formulae. More precisely, we will prove that $z^{(i)}(t)\tot$ for all $i=0,1,\dots,n-2$ and also, we will express a such $z$ as a sum of known terms. 
With the aid of complete Bell polynomials we will establish the equation for $z$ we will use along this work. Consider complete Bell polynomial $B_i$'s given by the following recursion formula $B_0=1$ and for $i\in \mathbb{N}$
\begin{equation}\label{complebell}
B_{i+1}(x_1,\dots,x_{i+1})=\sum_{j=0}^{i} {i \choose j}B_{i-j}(x_1,\dots,x_{i-j})x_{j+1}\,,
\end{equation}
These type of polynomials and related ones have been useful in combinatorial mathematics.  They also occur in many applications, such as in the Fa\'a di Bruno's formula, and also they are related to Stirling and Bell numbers, see \cite{AndBell,Bell}.

\begin{Theorem}\label{theo1}
Function $y$ given by \eqref{yeilz} is a solution to \eqref{poinca} in $[t_0,+\infty)$ if and only if $z$ is a solution to
\begin{equation}\label{dif}
\mathcal{ D}z(t)+P(r(t);\lambda)+\mathcal{L}(t,z)+\mathcal{ F}(t,z,\dots,z^{(n-2)})=0\,, \ \ \text{in $[t_0,+\infty)$,}
\end{equation}
where $\lambda$ is a fixed root of $P(a;x)$, the differential linear part with constant coefficients is the $n-1$ order differential operator
\begin{equation}\label{linearz}
	\mathcal{D}z = \sum_{j=1}^{n} \frac{1}{j!}\frac{\partial^{j}}{\partial x^j}P(a;\lambda)z^{(j-1)},
\end{equation}
the function depending only on perturbations $r=(r_0,\dots,r_n)$ with $r_n\equiv 0$ is
\begin{equation}\label{prl}
P(r(t);\lambda)=\sum_{k=0}^{n-1} \lambda^k r_k(t)=r_0(t)+\lambda r_1(t) + \lambda^2 r_2(t)+\cdots + \lambda^{n-1} r_{n-1}(t),
\end{equation}
the differential linear part with variable coefficients is
\begin{equation}\label{onedee}
\mathcal{L}(t,z):=\sum_{k=1}^{n-1} \frac{1}{k!}\frac{\partial^{k}}{\partial x^k}[P(r;\lambda)]z^{(k-1)}(t).
\end{equation}
and the nonlinear term is given by
\begin{equation}\label{formf}
	\mathcal{F}(t,z,z',\dots,z^{(n-2)})=\sum_{i=2}^{n}\sum_{j=0}^{n-i}{i+j \choose j}(a_{i+j}+r_{i+j}(t))\lambda^j\big(B_i(z,z,\dots,z^{(i-1)})-z^{(i-1)}\big)\,.
\end{equation}
\end{Theorem}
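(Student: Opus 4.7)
My strategy is to substitute the ansatz \eqref{yeilz} directly into \eqref{poinca}, divide by $y$, and identify the four pieces of the left-hand side of \eqref{dif}. The computational backbone is the identity
\[
\frac{y^{(i)}(t)}{y(t)} = B_i\bigl(\lambda+z(t),\,z'(t),\,\ldots,\,z^{(i-1)}(t)\bigr),\qquad 0\le i\le n,
\]
which follows from iteratively differentiating $y'=(\lambda+z)y$; equivalently, it is Fa\`a di Bruno's formula for $y=e^{\phi}$ with $\phi'=\lambda+z$. To separate the constant shift $\lambda$ in the first slot, I would invoke the exponential generating function $\sum_{i\ge 0}B_i(x_1,x_2,\dots)\tfrac{t^i}{i!} = \exp\bigl(\sum_{k\ge 1}x_k\tfrac{t^k}{k!}\bigr)$. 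Factoring $e^{\lambda t}$ out and comparing Taylor coefficients yields the shift identity
\[
B_i(\lambda+z,z',\ldots,z^{(i-1)}) = \sum_{k=0}^{i}\binom{i}{k}\lambda^{i-k}\,B_k(z,z',\ldots,z^{(k-1)}),
\]
with the convention $B_0\equiv 1$.

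Inserting these into $\tfrac{1}{y}\sum_{i=0}^{n}(a_i+r_i)y^{(i)}=0$ (using $a_n=1$, $r_n\equiv 0$) and swapping the order of summation gives
\[
\sum_{k=0}^{n}c_k(t)\,B_k(z,\ldots,z^{(k-1)}) = 0,\qquad c_k(t):=\sum_{i=k}^{n}\binom{i}{k}(a_i+r_i(t))\lambda^{i-k}.
\]
A direct computation shows $c_k(t) = \tfrac{1}{k!}\,\partial_x^{k}[P(a;x)+P(r(t);x)]\big|_{x=\lambda}$. For $k=0$ this equals $P(a;\lambda)+P(r(t);\lambda)$, and since $\lambda$ is a root of $P(a;x)$ it collapses to the pure-perturbation term $P(r(t);\lambda)$ of \eqref{prl}.

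For $k\ge 1$ I would split $B_k = z^{(k-1)} + (B_k-z^{(k-1)})$; a short induction via \eqref{complebell} shows $B_k-z^{(k-1)}$ is a polynomial of total degree $\ge 2$ in $z,z',\ldots,z^{(k-2)}$ and in particular contains no $z^{(k-1)}$. The linear pieces contributed by $\partial_x^{k}P(a;x)\big|_{x=\lambda}$, summed over $k=1,\ldots,n$, reassemble into $\mathcal{D}z$ as in \eqref{linearz}; those contributed by $\partial_x^{k}P(r(t);x)\big|_{x=\lambda}$ reassemble into $\mathcal{L}(t,z)$ as in \eqref{onedee}, with the upper index truncated at $n-1$ because $r_n\equiv 0$ forces the $k=n$ term to vanish. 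The remaining nonlinear pieces, re-indexed via $m=i+j$ with $k=i$, match formula \eqref{formf} on the nose. Every step above is algebraically reversible (dividing by $y>0$ is always legal), so the stated equivalence follows.

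The only real obstacle is bookkeeping: I must verify that the $B_n$ contribution, which carries the principal term $z^{(n-1)}$, lands cleanly inside $\mathcal{D}z$ with coefficient $1$ (via $\partial_x^{n}P(a;x) = n!\,a_n = n!$) and not inside $\mathcal{L}$ or $\mathcal{F}$, and that the $k=0$ slice (where no $z^{(k-1)}$ exists) is handled separately as the source of $P(r;\lambda)$. The generating-function identity makes the shift in the first argument of $B_i$ transparent; without it, a direct induction from \eqref{complebell} would work but would be notationally heavier.
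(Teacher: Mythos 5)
Your proposal follows essentially the same route as the paper: you compute $y^{(i)}/y = B_i(\lambda+z,z',\dots,z^{(i-1)})$, apply the binomial-type shift identity (the paper's \eqref{reducedz}, which you derive via the exponential generating function rather than cite), swap the order of summation, and then split each $B_k$ into its unique linear term $z^{(k-1)}$ plus a degree-$\ge 2$ remainder to extract $P(r;\lambda)$, $\mathcal{D}z$, $\mathcal{L}(t,z)$, and $\mathcal{F}$. The coefficient packaging $c_k(t)=\tfrac{1}{k!}\partial_x^k[P(a;x)+P(r(t);x)]\big|_{x=\lambda}$ is a clean reformulation of the paper's double-sum expression \eqref{poincazbell}, but the underlying computation is identical.
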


	\medskip 

As in \cite{Perr,Po}, we also assume that $P(a;x)$ has $n$ roots, $\lambda,\lambda_1,\dots,\lambda_{n-1}$, with different real parts, so that the characteristic polynomial of \eqref{linearz} $P_D(a,x)$ (see \eqref{defdpol}) will have roots with different and non zero real parts. Therefore an exponential dichotomy can be consider a scalar exponential dichotomy with scalar Green's functions. Given $\omega\in \C$ we define the function
\begin{equation}\label{defg}
g_{\omega}(t,s):=
\begin{cases}
-{\rm sgn}(\Re{\omega}) e^{\omega(t-s)} & {\rm sgn}(\Re{\omega})(t-s) < 0 \\
0 & \text{otherwise}
\end{cases} \,,
\end{equation}
where $\Re{\omega}$ denotes the real part of $\omega$. Hence, we  define Green's operators acting on locally integrable functions $f:[t_0,+\infty)\to\C$ given by
\begin{equation}\label{defls}
G_{\omega}\big[f\big](t):=\int_{t_0}^{\infty} g_{\omega}(t,s)f(s)\,ds\quad\text{and}\quad	I_{\omega}\big[f\big](t):=\int_{t_0}^{\infty}|g_{\omega}(t,s) f(s)|\,ds\,.
\end{equation}
These operators allows us to define the Green's function $\mathcal{G}$ and Green's operator $G$ for equation $\mathcal{D}z=f$ (see equations \eqref{greenfor} and \eqref{greenop}), so that an integral equation for $z$ is obtained, see \eqref{eiz}. Then, the following asymptotic formula as $t\to \infty$ is deduced
\begin{equation}\label{afintro}
    \begin{split}
        y_\lambda(t)=&\,(1+o(1)) e^{\lambda(t-t_0)}\\
        &\times \exp\left((-1)^n\prod_{j=1}^{n-1}(\lambda_j-\lambda)^{-1}\int_{t_0}^t [P(r;\lambda)(s)+\mathcal{L}(s,z(s))+\mathcal{F}(s,Z(s))]\,ds\right)\,.
    \end{split}
\end{equation}
Some conditions (see \eqref{cl}) insure the existence of $z$ satisfying \begin{equation}\label{estzi}
    z^{(i)}=O\big((I_{\beta}+I_{-\beta})[P(r;\lambda)]\big),
\end{equation}
where $0<\beta<\min\{\Re(\lambda_j - \lambda)\ :\ j=1,\dots,n-1\}$, concluding 
\begin{equation}\label{yjioyi}
    \frac{y_\lambda^{(i)}(t)}{y_\lambda(t)}=\lambda^{i} + O\big((I_{\beta}+I_{-\beta})[P(r;\lambda)]\big)
\end{equation}
for $i=1,\dots,n-1$ generalizing Poincare-Perron results \cite{Perr,Po}. The scalar treatment allows obtain sharp asymptotic formulas and error estimates. As Bencke \cite{Behncke,BehnckeII}  affirm, the error estimates are very important in the applications. All this scalar manipulations are translated in extensions of famous results of Levinson, Hartman-Wintner and Harris-Lutz. In particular, the following formula
$$y_\lambda(t)=\bigg[1+O\left(\int_{t}^{+\infty}(I_{\beta}+I_{-\beta})[P(r;\lambda)](s)\, ds \right)\bigg] e^{\lambda(t-t_0)}$$
has been shown in Theorem \ref{pplev} extending Levinson's Theorem \cite[Th. 1.3.1]{East}. Furthermore, we prove that
\begin{equation*}
    \begin{split}
        y_\lambda(t)=&\, \bigg[1+O\left(\int_{t}^{+\infty}(I_{\beta}+I_{-\beta})[R(\cdot,\theta)](s)\, ds \right)\bigg] e^{\lambda(t-t_0)}\\
        &\,\times \exp\bigg(- \sum_{j=1}^{n-1}{1\over \Gamma_j\gamma_j} G_{\gamma_j}\big[P(r;\lambda)\big](t)  +(-1)^n\prod_{k=1}^{n-1}(\lambda_k-\lambda)^{-1}\int_{t_0}^t P(r;\lambda)(s)\,ds\bigg),
    \end{split}
\end{equation*}
where $\theta=-G[P(r;\lambda)]$, $R(\cdot,\theta)=\mathcal{L}(\cdot,\theta) + \mathcal{F}(\cdot,\theta,\theta',\dots,\theta^{(n-2)})$, $\gamma_j=\lambda_j-\lambda$ and  $\Gamma_j=\prod_{k=1,k\ne i}^{n-1} (\lambda_i-\lambda_k)$, in Theorem \ref{tut} and in Theorem \ref{hllp} we show that
$$y_\lambda(t)=(1+o(1))e^{\lambda(t-t_0)}\exp\left(\int_{t_0}^t\left[ \theta_1(s)+\cdots + \theta_m(s)\right]\,ds\right),$$
where $\theta_1=\theta$ and $\theta_i$'s depends on $r_i$'s and they are recursively defined.

\medskip

The case $n=5$ is exhaustively studied and also a concrete equation of order 5 with diverse integrability conditions are examinated obtaining several nice formulae, with $r_1(t)=t^{-2/3}, r_2(t)=0, r_3(t)=(t^2+1)^{-1/3}$, $\lambda=1$ we obtain that 
$$
  y(t)= (1+O(1/t))ce^{\lambda(t-t_0)} \exp\left(\frac{\sqrt[3]{t}}{9} +\frac{t \sqrt{t^{2}+1}\left(2 t^{2}+5\right)}{48} \right)\left(t+\sqrt{t^2+1} \right)^{1/16}\,.
$$

Concerning possible generalizations, it would be interesting to know if similar results to Theorems \ref{orederzun} and \ref{base} are still valid in the class of almost periodic type functions. In other words, if it is possible to generalize Poincar\'e's and Perron's classical problem of approximation \eqref{poinca} to the class of almost periodic type functions independently of $n$, the order of the equation. The authors in \cite{FP4} have been addressed it for $n=2$. See \cite{BFP} for $n\ge 3$.

\medskip
The rest of the paper is organized as follows. Section \ref{io} is devoted to provide a review of useful results such as integral operators and complete Bell polynomials. Besides, we prove Theorem \ref{theo1} and we study a more general nonlinear differential equation. Main results are presented in section \ref{espp}. Precisely, in subsection 3.1 we have recovered Poincar\'e's and Perron's results and provide asymptotic formulae in both cases (see \eqref{fath1}  and \eqref{fath2}). Furthermore, Levinson and Hartman-Wintner type results and weaker version of them have been obtained under $L^p$-perturbations in subsection 3.2. We end section \ref{espp} with a Harris-Lutz type result, namely, $L^p$-perturbations with $p>2$. Our results improve, for a high order equation \eqref{poinca}, those results in \cite{East,Hl}, reducing the calculus in Theorem \ref{hllp} to $[p]$ summands ($[p]$ is the integer part of $p$). In \cite{East,Hl} the quantity of sumands in Theorem \ref{hllp} is $2^l$ (usually too big), where $2^{l-1}<p\le 2^l$
(see Theorem \ref{hllp} and \cite[Th. 2.8.3]{East} with $\nu=3$). In section 4, we discuss obtained results for $n=5$ and present an illustrative example. Finally, let us emphasize that the used method is scalar \cite{FP1,FP2,FP3}, reducing the order of the equation and avoiding the usual diagonalization process \cite{Co,East,EG1,EG2}.

	\section{Preliminaries}\label{io}

In this section we shall review some facts concerning Green's type operators, complete Bell polynomials, a Riccati type equation and we will show an existence result for a nonlinear equation, which will be applied to the Riccati type equation related with \eqref{poinca}. In order to make a better exposition of these topics, we consider four subsections.

\subsection{Integral operators}

Note that $G_{\omega}[f]$ solve the differential equation $z'-\omega z=f$ in $[t_0,+\infty)$, $I_\omega=I_{\Re\omega}$ and
\begin{equation}\label{cotagl}
	\big|G_\omega\big[f\big](t) \big|\leq  I_{\Re{\omega}}\big[f\big](t).
\end{equation}
In order to state our results, we will need the following facts.

\begin{Lemma}\label{oer}
Let $\gamma\in\C$ with $\Re{\gamma}\neq0$ and let $r$ be a locally
integrable function on $[t_0,\infty)$. Consider the functions
$$r^*(t)=\int_t^{t+1}|r(\tau)|\,d\tau\qquad\text{and}\qquad \overline r(t)=\sup_{s\ge t}(1+s-t)^{-1}\int_t^s|r(\tau)|\,d\tau.$$
Then the following statements are equivalent: ($i$) $r^*(t)\to 0$ as $t\to +\infty$, ($ii$) $\overline
r(t)\to 0$ as $t\to +\infty$ and ($iii$) $I_{\gamma}[r](t)\to 0$ as $t\to +\infty$. Furthermore,
$G_\gamma[r](t)\to 0$ as $t\to +\infty$ holds if $r$ is conditionally integrable in
$[t_0,\infty)$. If $r\in L^p[t_0,\infty)$ for some $p\ge 1$, then
$I_\gamma[r](t)\to 0$ as $t\to +\infty$ and $I_\gamma[r]\in
L^p[t_0,\infty)$.
\end{Lemma}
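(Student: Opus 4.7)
The plan is to dispatch the three equivalences by comparing the three quantities $r^*$, $\overline r$ and $I_\gamma[r]$ to each other through a unit-interval decomposition of the integration domain, then handle the two supplementary claims (conditional integrability and $L^p$) separately. Throughout, I may assume $\alpha:=\Re\gamma>0$, since the case $\Re\gamma<0$ is entirely symmetric (the kernel just becomes supported on $s<t$ instead of $s>t$). Under this reduction, $I_\gamma[r](t)=\int_t^{\infty}e^{-\alpha(s-t)}|r(s)|\,ds$, which is in a convenient ``convolutional'' form.

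For (ii)$\Rightarrow$(i), plug $s=t+1$ into the definition of $\overline r$. For (i)$\Rightarrow$(ii), partition $[t,s]$ into unit pieces $[t+k,t+k+1]$, estimate each by $r^*(t+k)$, and divide by $1+s-t$ to obtain $\overline r(t)\le\sup_{k\ge 0}r^*(t+k)$, which tends to $0$. For (iii)$\Rightarrow$(i), restrict the integral defining $I_\gamma[r](t)$ to $[t,t+1]$, where $e^{-\alpha(s-t)}\ge e^{-\alpha}$, to get $r^*(t)\le e^{\alpha}I_\gamma[r](t)$. For (i)$\Rightarrow$(iii), split the integral again into unit pieces, bounding
\[
I_\gamma[r](t)\;\le\;\sum_{k=0}^{\infty}e^{-\alpha k}r^*(t+k)\;\le\;\frac{\sup_{k\ge 0}r^*(t+k)}{1-e^{-\alpha}}\,,
\]
and the supremum tends to zero with $t$.

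For $G_\gamma[r](t)\to 0$ under conditional integrability, the plan is to integrate by parts. Setting $R(s):=\int_s^{\infty}r(\tau)\,d\tau$, which exists and tends to $0$ by assumption, one obtains $G_\gamma[r](t)=-R(t)+\gamma\int_t^{\infty}e^{\gamma(t-s)}R(s)\,ds$; the first term tends to zero, and the second is controlled in modulus by $|\gamma|\,I_\gamma[R](t)$, which tends to zero by the already-established equivalence (iii) applied to the continuous, vanishing-at-infinity function $R$. For the $L^p$ claims: if $r\in L^p$, H\"older on $[t,t+1]$ gives $r^*(t)\le\bigl(\int_t^{t+1}|r|^p\bigr)^{1/p}\to 0$, so (iii) forces $I_\gamma[r](t)\to 0$; and since $I_\gamma[r]$ is the convolution of $|r|$ with the $L^1$ kernel $e^{-\alpha|\cdot|}\chi_{[0,\infty)}$ (or its reflection), Young's convolution inequality yields $I_\gamma[r]\in L^p$ with $\|I_\gamma[r]\|_p\le\alpha^{-1}\|r\|_p$.

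I expect the only step with a genuine subtlety to be the integration by parts for the conditionally integrable case: one must verify that the boundary term at infinity vanishes (which it does because $|e^{\gamma(t-s)}|\le 1$ for $s\ge t$ while $R(s)\to 0$), and one must know $R$ is continuous and bounded in order to re-apply the equivalence to it. Everything else is routine bookkeeping with unit-interval partitions and Young's inequality.
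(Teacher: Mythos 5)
The paper states Lemma \ref{oer} without supplying a proof (unlike Lemma \ref{lxir} immediately after it, which is explicitly attributed to \cite{FP2}), so there is nothing internal to compare against; I therefore assess the proposal on its own.

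Your argument is correct in substance. Assuming $\alpha:=\Re\gamma>0$, all four implications among (i), (ii), (iii) follow exactly as you describe from the unit-interval partition and the geometric series bound, and the small auxiliary fact that $\sup_{k\ge 0}r^*(t+k)\to 0$ whenever $r^*(t)\to 0$ is elementary. For the conditional-integrability claim, $R(s)=\int_s^\infty r$ is absolutely continuous with $R'=-r$ a.e.\ and $R(s)\to0$, so the integration by parts is licit and the boundary term at $\infty$ vanishes since $|e^{\gamma(t-s)}|=e^{-\alpha(s-t)}\le 1$; the residual integral is $\le|\gamma|\,I_\gamma[R](t)$, and applying (i)$\Leftrightarrow$(iii) to the continuous, bounded, vanishing function $R$ closes the loop. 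The $L^p$ claim via H\"older on unit intervals and Young's convolution inequality with the kernel $e^{-\alpha|\cdot|}\chi_{\{\pm u\ge 0\}}$ is correct, with $\|I_\gamma[r]\|_p\le|\alpha|^{-1}\|r\|_p$.

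One small expository caveat: the reduction to $\Re\gamma>0$ is not quite "entirely symmetric," because $r^*$ and $\overline r$ always look \emph{forward} in $t$ whereas $I_\gamma$ and $G_\gamma$ look \emph{backward} when $\Re\gamma<0$ (the integration is over $[t_0,t]$, not $[t,\infty)$). The estimates still go through with the same ingredients — e.g.\ the lower bound in (iii)$\Rightarrow$(i) becomes $r^*(t)\le e^{|\alpha|}I_\gamma[r](t+1)$, and the integration by parts picks up a boundary term $e^{\gamma(t-t_0)}R(t_0)$ which vanishes since $|e^{\gamma(t-t_0)}|=e^{\alpha(t-t_0)}\to0$ for $\alpha<0$ — but this is a separate, if routine, verification rather than a literal relabelling. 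This is a presentational point, not a gap; the proof is sound.
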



\begin{Lemma}[\cite{FP2}]\label{lxir}
    Let $\gamma\in\C$ with $\Re{\gamma}\neq0$ and $\xi:[t_0,+\infty)\to\R$ be a locally integrable function such that $I_\gamma[\xi]$ is bounded in $[t_0,+\infty)$. If $r(t)\to0$ as $t\to+\infty$ then $I_\gamma[\xi r](t)\to 0$ as $t\to+\infty$. 
\end{Lemma}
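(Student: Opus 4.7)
My plan is to apply the standard $\varepsilon$-split trick tailored to the exponential kernel defining $I_\gamma$. Set $M:=\sup_{t\ge t_0} I_\gamma[\xi](t)<\infty$, which is finite by hypothesis. Given $\varepsilon>0$, use $r(t)\to 0$ to pick $T\ge t_0$ with $|r(s)|\le\varepsilon$ for every $s\ge T$, and decompose
\[
I_\gamma[\xi r](t)=\int_{t_0}^T|g_\gamma(t,s)\,\xi(s)\,r(s)|\,ds+\int_T^\infty|g_\gamma(t,s)\,\xi(s)\,r(s)|\,ds=:A_T(t)+B_T(t).
\]
The tail $B_T(t)$ is immediate: pulling $|r(s)|\le\varepsilon$ outside the integral yields $B_T(t)\le\varepsilon\, I_\gamma[\xi](t)\le\varepsilon M$ uniformly in $t\ge t_0$.

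For the bounded-interval part $A_T(t)$ I would split on the sign of $\Re\gamma$, decoding the kernel via \eqref{defg}. If $\Re\gamma>0$ then $g_\gamma(t,s)=0$ unless $s>t$, so $A_T(t)\equiv 0$ as soon as $t\ge T$. If $\Re\gamma<0$ then $g_\gamma(t,s)=0$ unless $s<t$, with $|g_\gamma(t,s)|=e^{\Re\gamma(t-s)}$; for $t\ge T$ and $s\in[t_0,T]$ the map $s\mapsto e^{\Re\gamma(t-s)}$ is increasing (since $\Re\gamma<0$), so $|g_\gamma(t,s)|\le e^{\Re\gamma(t-T)}$, whence
\[
A_T(t)\le e^{\Re\gamma(t-T)}\int_{t_0}^T|\xi(s)\,r(s)|\,ds,
\]
which tends to $0$ as $t\to\infty$ because the integral is finite and the prefactor decays exponentially.

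Combining the two estimates gives $\limsup_{t\to\infty} I_\gamma[\xi r](t)\le\varepsilon M$, and letting $\varepsilon\to 0^{+}$ finishes the argument. The only point needing a quick sanity check is finiteness of $\int_{t_0}^T|\xi r|$, which follows because $r$ is bounded on the compact interval $[t_0,T]$ (it converges at infinity, hence is eventually bounded, and it is locally well-behaved under the standing assumptions) while $\xi$ is locally integrable. Beyond this routine bookkeeping there is no substantive obstacle: the whole proof reduces to a split-at-a-threshold estimate once the two cases for the sign of $\Re\gamma$ are handled separately.
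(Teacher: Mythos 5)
Your proof is correct. Note, however, that the paper does not actually prove this lemma: it is stated with the citation $[\text{FP2}]$ and used as a known result, so there is no in-paper argument to compare against. The split-at-threshold estimate you give is the natural one and goes through exactly as you describe. Writing $M=\sup_{t\ge t_0}I_\gamma[\xi](t)$, the tail $\int_T^\infty$ is bounded by $\varepsilon M$ once $|r|\le\varepsilon$ on $[T,\infty)$, and the head $\int_{t_0}^T$ is handled correctly in both sign cases: when $\Re\gamma>0$ the kernel $g_\gamma(t,s)$ vanishes for $s\le T\le t$, and when $\Re\gamma<0$ one has $|g_\gamma(t,s)|\le e^{\Re\gamma(t-T)}\to 0$ uniformly over $s\in[t_0,T]$ as $t\to\infty$. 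One small tightening of the bookkeeping: the finiteness of $\int_{t_0}^T|\xi r|$ should not be attributed to ``$r$ converges at infinity, hence is eventually bounded'' --- eventual boundedness says nothing about $[t_0,T]$ --- but rather to the implicit standing requirement that $\xi r$ be locally integrable, which is needed for $I_\gamma[\xi r](t)$ to be defined in the first place; with that observation the sanity check is immediate.
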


Operators $G_\omega$ and $I_\omega$ have been very useful in asymptotic integration, see for instance \cite{Bellm,CHP1,CHP2,East,FP2,FP3,FP4}. They also satisfy the following inequalities, see \cite{FP2} for a proof. Here, we denote ${\rm sgn}$ as the sign function.

\begin{Lemma}\label{techilem}
	Let  $\alpha$ be a non zero real number and  $0<\beta< |\alpha|$\,. Then  
	\begin{enumerate}
	    \item \begin{equation*}
	I_{\alpha}\big[aI_{\rm sgn(\alpha)\beta}[b]\big](t) \leq  I_{\rm sgn(\alpha)\beta}[b](t)I_{\alpha-\rm sgn(\alpha)\beta}[a](t) \,,
	\end{equation*}
	
	\item 
	$$
	I_{\alpha}[a](t)\leq I_{\rm sgn(\alpha)\beta}[a]\,,
	$$
	
	\item
	\begin{equation*}
	I_{\alpha}\big[a\big(I_{\beta}+I_{-\beta}\big)[b]\big)\big](t)  \leq
	2\big(I_{\beta}+I_{-\beta}\big)[b](t)I_{\alpha-{\rm sgn}(\alpha)\beta}[a](t)\,.
	\end{equation*}
	
	\end{enumerate}
	
\end{Lemma}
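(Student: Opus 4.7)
The plan is to reduce each inequality to a one-sided exponential integral and then bound by elementary exponential comparisons. For $\alpha \in \mathbb{R}\setminus\{0\}$ I will use
\[
I_\alpha[f](t) = \int_t^{\infty} e^{\alpha(t-s)} |f(s)|\, ds \quad (\alpha > 0), \qquad I_\alpha[f](t) = \int_{t_0}^{t} e^{\alpha(t-s)} |f(s)|\, ds \quad (\alpha < 0).
\]
Because every claim is symmetric in $\alpha$ with the integration interval reflected about $t$, I will write out only the case $\alpha>0$, where $\operatorname{sgn}(\alpha)\beta=\beta$; the case $\alpha<0$ is verbatim with $[t_0,t]$ in place of $[t,\infty)$. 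Part (2) is then immediate: on $s\ge t$ one has $t-s\le 0$, and since $\beta<\alpha$ we get $e^{\alpha(t-s)}\le e^{\beta(t-s)}$; integrating against $|a(s)|$ gives $I_\alpha[a](t)\le I_\beta[a](t)$.

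For part (1) the central tool is the shift identity
\[
I_\beta[b](s) = e^{\beta(s-t)}\int_s^\infty e^{\beta(t-u)}|b(u)|\,du \leq e^{\beta(s-t)}\,I_\beta[b](t) \qquad (s\ge t),
\]
obtained by enlarging $[s,\infty)$ to $[t,\infty)$. Substituting this into $I_\alpha[aI_\beta[b]](t)$ collapses the double integral to $I_\beta[b](t)\int_t^\infty e^{(\alpha-\beta)(t-s)}|a(s)|\,ds = I_\beta[b](t)\,I_{\alpha-\beta}[a](t)$, which is exactly the claim.

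For part (3) I would split the left-hand side as $I_\alpha[aI_\beta[b]] + I_\alpha[aI_{-\beta}[b]]$ and bound each piece by $(I_\beta+I_{-\beta})[b](t)\,I_{\alpha-\beta}[a](t)$. The first piece is handled by (1) together with the trivial $I_\beta[b]\le(I_\beta+I_{-\beta})[b]$. The mixed-sign piece $I_\alpha[aI_{-\beta}[b]]$ is the main obstacle, because (1) does not apply when the signs of $\alpha$ and $-\beta$ disagree. My plan is to use the symmetric representation $(I_\beta+I_{-\beta})[b](s)=\int_{t_0}^\infty e^{-\beta|s-u|}|b(u)|\,du$ together with the triangle inequality $|s-u|\ge|t-u|-|s-t|$ to derive the shift estimate
\[
(I_\beta+I_{-\beta})[b](s) \le e^{\beta|s-t|}(I_\beta+I_{-\beta})[b](t).
\]
Since $I_{-\beta}[b](s)\le(I_\beta+I_{-\beta})[b](s)$, this substitutes into $I_\alpha[aI_{-\beta}[b]](t)$ exactly as in the proof of (1) and yields the same bound; summing the two contributions produces the factor $2$ in (3).
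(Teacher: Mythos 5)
Your proof is correct, and for part (3) it is genuinely cleaner than the argument the paper alludes to (the paper itself defers the proof to the cited reference \cite{FP2}, and a draft of a computation appears only in a commented-out block of the source). The pointwise shift estimates you use are the right tool throughout. For (2), the paper's draft derives the inequality from (1) applied to the constant function $\mathbf{1}$, using $I_{\pm d}[\mathbf{1}]\le 1/d$ with $d=|\alpha|-\beta$, whereas you give the direct exponential comparison $e^{\alpha(t-s)}\le e^{\operatorname{sgn}(\alpha)\beta(t-s)}$ on the one-sided integration interval; both are fine, yours is more self-contained. For (1), the paper's draft uses a Fubini argument, swapping the order of integration and then enlarging the inner domain to a half-line; your pointwise bound $I_{\operatorname{sgn}(\alpha)\beta}[b](s)\le e^{\operatorname{sgn}(\alpha)\beta(s-t)}I_{\operatorname{sgn}(\alpha)\beta}[b](t)$ collapses the double integral in one step and is equivalent. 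For (3), the mixed-sign term $I_\alpha[aI_{-\operatorname{sgn}(\alpha)\beta}[b]]$ is indeed the crux; the paper's draft handles it by splitting the region of integration into two pieces and estimating each, which produces the factor $2$ somewhat indirectly. Your observation that $(I_\beta+I_{-\beta})[b](s)=\int_{t_0}^\infty e^{-\beta|s-u|}|b(u)|\,du$ together with the reverse triangle inequality $|s-u|\ge|t-u|-|s-t|$ gives the single clean shift bound $(I_\beta+I_{-\beta})[b](s)\le e^{\beta|s-t|}(I_\beta+I_{-\beta})[b](t)$, from which both sign cases fall out uniformly and the factor $2$ appears transparently as the sum of two identical bounds. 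This is a nicer organizing idea and worth recording.
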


\subsection{Complete Bell polynomials}
Direct computations lead us to obtain the complete Bell polynomials using the recursion formula \eqref{complebell}. For example, it follows straightforward that the first 6 polynomials
\begin{equation}\label{bellpols}
	\begin{split}
	B_0 &=1 \\
	B_1(x_1) &=x_1 \\
	B_2(x_1,x_2)&= x_1^2+x_2 \\
	B_3(x_1,x_2,x_3)& = x_1^3+3x_1x_2+x_3 \\
	B_4(x_1,x_2,x_3,x_4)& = x_1^4+6x_1^2x_2+4x_1x_3+3x_2^2+x_4\\
	B_5(x_1,x_2,x_3,x_4,x_5)& = x_1^5+10x_1^3x_2+15x_2^2x_1 + 10x_1^2x_3+10x_3x_2+5x_4x_1+x_5
	\end{split}
\end{equation}
The complete Bell polynomials satisfy the following binomial type relation \cite{Bell} 
\begin{equation}\label{reducedz}
	B_i(x_1+y_1,\dots,x_i+y_i)=\sum_{j=0}^{i}{i \choose j} B_{i-j}(x_1,\dots,x_{i-j})B_j(y_1,\dots,y_j)\,.
\end{equation}
Also, by formula \eqref{complebell}, an induction shows that $B_j(\lambda,0,\dots,0)=\lambda^j $ for all $j\geq 1$\,. By the recursion formula \eqref{complebell}, it follows easily that the unique linear term of the complete Bell polynomial $B_i$ is $x_i$, $i\ge 1$. Moreover, define polynomials $f_i$, $i\ge 0$ by
\begin{equation}\label{bimzi}
    \begin{split}
        f_i(x_1,\dots,x_i)&=B_{i+1}(x_1,\dots,x_{i+1})-x_{i+1}=\sum_{j=0}^{i-1} {i\choose j} B_{i-j}(x_1,\dots,x_{i-j})x_{j+1},
    \end{split}
\end{equation}
in view of \eqref{complebell}. The following characterization will be useful to study \eqref{poinca} with perturbations $r_i$'s satisfying integrable conditions.

\begin{Lemma}
    Polynomials $f_i$, $i\ge 1$ can be expressed as
    \begin{equation}\label{dfi}
    f_i(x_1,\dots,x_i)=\sum_{k=2}^{i+1} h_{k,i}(x_1,\dots,x_i),
    \end{equation}
    where $h_{k,i}$ is a polynomial that contains only terms of degree $k$. 
\end{Lemma}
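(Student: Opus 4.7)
The plan is to prove, by induction on $n$, the following two facts about the complete Bell polynomial $B_n$:
(a) every monomial appearing in $B_n$ has (ordinary) total degree between $1$ and $n$, and
(b) the unique degree-$1$ monomial in $B_n$ is $x_n$. The claim will then follow immediately by splitting $f_i=B_{i+1}-x_{i+1}$ into its homogeneous components of degrees $2,3,\dots,i+1$ and naming these components $h_{k,i}$.

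For the base case, $B_1(x_1)=x_1$, which has only a single monomial of degree $1$, equal to $x_1$. For the inductive step, I would assume the claim up to $B_n$ and use the recursion
\[
B_{n+1}(x_1,\dots,x_{n+1})=\sum_{j=0}^{n}\binom{n}{j}B_{n-j}(x_1,\dots,x_{n-j})\,x_{j+1}.
\]
The term $j=n$ contributes exactly $B_0\cdot x_{n+1}=x_{n+1}$, i.e.\ a single monomial of degree $1$. For $j<n$, the factor $B_{n-j}$ consists, by the inductive hypothesis, of monomials of degrees between $1$ and $n-j$, and multiplying by $x_{j+1}$ shifts each degree up by one; thus the resulting monomials in such a summand all have degree between $2$ and $n-j+1\le n+1$. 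Summing over $j$ shows (a) for $B_{n+1}$ and also that the only contribution of degree $1$ comes from $j=n$ and equals $x_{n+1}$, proving (b).

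With this in hand, writing
\[
f_i(x_1,\dots,x_i)=B_{i+1}(x_1,\dots,x_{i+1})-x_{i+1}
\]
removes exactly the unique linear monomial of $B_{i+1}$, leaving a polynomial whose monomials all have degree between $2$ and $i+1$. Defining $h_{k,i}(x_1,\dots,x_i)$ to be the sum of the monomials in $f_i$ of total degree $k$ (equivalently, the degree-$k$ homogeneous component of $B_{i+1}$, for $2\le k\le i+1$) gives the decomposition \eqref{dfi}. Note that $f_i$ depends only on $x_1,\dots,x_i$ since the variable $x_{i+1}$ appears in $B_{i+1}$ solely through the linear term that was subtracted.

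There is no serious obstacle here; the only thing to be careful with is the bookkeeping in the inductive step, where one must observe that the recursion combines a polynomial of known degree range with a single-variable factor $x_{j+1}$ of degree $1$, and that the single degree-$1$ monomial of $B_{n+1}$ cannot be created by any other summand since $B_{n-j}$ has no constant term for $n-j\ge 1$.
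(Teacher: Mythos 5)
Your proof is correct, and it takes a somewhat different route from the paper's. The paper proves the lemma by inducting directly on the recursion \eqref{fim1} for $f_i$: it splits the sum in \eqref{fim1} into the piece $\sum_j \binom{l+1}{j} x_{l-j+1}x_{j+1}$ (which furnishes $h_{2,l+1}$) and the piece $\sum_j \binom{l+1}{j} f_{l-j} x_{j+1}$, then substitutes the inductive decomposition of $f_{l-j}$ and reindexes to extract explicit recursive formulas for $h_{k,l+1}$, $k\ge 3$. You instead push the induction one level up, onto the complete Bell polynomials themselves: you establish that $B_n$ has monomials only of degrees $1$ through $n$ and that its sole linear monomial is $x_n$, and then you identify $h_{k,i}$ as the degree-$k$ homogeneous component of $f_i=B_{i+1}-x_{i+1}$. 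Both arguments are sound. The paper's approach has the advantage of producing explicit recursive expressions for the $h_{k,i}$, which is the form the authors use later (e.g.\ in \eqref{dfz} and in Section~4); your approach is cleaner and more modular, since it isolates the two structural facts about $B_n$ (degree range and unique linear term) that the paper uses informally elsewhere without separate proof, and it characterizes $h_{k,i}$ canonically as a homogeneous component rather than by a recursion whose output one must then recognize as homogeneous.
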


\begin{proof}
Note that $f_0=0$ and for $i\in\mathbb{N}$ polynomials $f_i$ can be found by using the recursion formula
\begin{equation}\label{fim1}
    \begin{split}
        f_{i+1}(x_1,\dots,x_{i+1})
        &=\sum_{j=0}^{i} {i+1\choose j} \left[f_{i-j}(x_1,\dots,x_{i-j})+x_{i-j+1}\right]x_{j+1}.
    \end{split}
\end{equation}
Direct computations shows that $f_1(x_1)=h_{2,1}(x_1)=x_1^2$, $f_2=h_{2,2}+h_{3,2}$ with $h_{2,2}(x_1,x_2)=3x_1x_2$, $h_{3,2}(x_1,x_2)=x_1^3$ and $f_3=h_{2,3}+h_{3,3}+h_{4,3}$, with $h_{2,3}(x_1,x_2,x_3)=4x_1x_3+3x_2^2$, $h_{3,3}(x_1,x_2,x_3)=6x_1^2x_2$ and $h_{4,3}(x_1,x_2,x_3)=x_1^4$
so that, \eqref{dfi} is true for $i=1,2,3$. By induction assume that \eqref{dfi} is true for $i=l$. So, from \eqref{fim1} for $i=l+1$ we have that
\begin{equation*}
        f_{l+1}(x_1,\dots,x_{l+1})
        =\sum_{j=0}^{l-1} {l+1\choose j} f_{l-j}(x_1,\dots,x_{l-j})x_{j+1} +\sum_{j=0}^{l} {l+1\choose j} x_{l-j+1}x_{j+1},
\end{equation*}
in view of $f_0=0$. Hence, we deduce that for $l\ge 0$
$$h_{2,l+1}(x_1,\dots,x_{l+1})= \sum_{j=0}^{l} {l+1\choose j} x_{l-j+1}x_{j+1},$$
is a polynomial that contains only terms of degree 2. Replacing \eqref{dfi} with $i=l-j$ and changing the order in the sum, we find that
\begin{equation*}
        \sum_{j=0}^{l-1} {l+1\choose j} f_{l-j}(x_1,\dots,x_{l-j})x_{j+1}
        =\sum_{k=2}^{l+1} \sum_{j=0}^{l-k+1} {l+1\choose j} h_{k,l-j}(x_1,\dots,x_{l-j})x_{j+1}
\end{equation*}
and $\displaystyle \sum_{j=0}^{l-k+1} {l+1\choose j} h_{k,l-j}(x_1,\dots,x_{l-j})x_{j+1}$ is a polynomial that contains terms of degree $k+1$. Therefore, defining for $k\ge 3$
$$h_{k,l+1}(x_1,\dots,x_{l+1})= \sum_{j=0}^{l-k+2} {l+1\choose j} h_{k-1,l-j}(x_1,\dots, x_{l-j+})x_{j+1},\quad l\ge 0,$$
the conclusion follows.
\end{proof}

\medskip

\subsection{Riccati type equation}	
	
	\medskip

\begin{proof}[Proof of Theorem \ref{theo1}]
The proof will be done in several steps. We compute $y^{(i)}$ in terms of $z$\,, for $i=0,1,2,\dots,n$\,. To do that, we first fix the notation setting
	$$
	y(t)=e^{g(t)}\,,\quad \mbox{ where } g(t)=\int_{t_0}^t(\lambda+z(s))\,ds\,,
	$$
so that for $i=0,\dots,n$ we have that
$$y^{(i)}=\frac{d^{i}}{dt^{i}}e^{g} \quad \text{ and }\quad 	\frac{y^{(i)}}{y} = e^{-g}\frac{d^{i}}{dt^{i}}e^{g}.$$
Then, denoting by $g_i(t)=g^{(i)}(t)$, $i=0,\dots,n$ and using the Leibniz rule, it follows that
\begin{equation}\label{dlr}
\frac{y^{(i+1)}}{y}=e^{-g}\frac{d^{i}}{dt^{i}}(g_1e^g)= \sum_{j=0}^{i}{i \choose j}e^{-g}\frac{d^{i-j}}{dt^{i-j}}(e^g)\frac{d^{j}}{dt^{j}} g_1= \sum_{j=0}^{i}{i \choose j} \frac{y^{(i-j)}}{y} g_{j+1}.
\end{equation}
Thus, taking into account \eqref{complebell} and \eqref{dlr}, we find that
$$\frac{y^{(i)}}{y}=B_i(g_1,\dots,g_i)= e^{-g}\frac{d^i}{dt^i}e^{g}\,
	$$
then, we verify that
$$\frac{y^{(i+1)}}{y}= B_{i+1}(g_1,\dots,g_{i+1})= \sum_{k=0}^{i}{i \choose j} B_{i-j}(g_1,\dots,g_{i-j})g_{j+1} \,.$$
for $i\in \mathbb{N}$. Returning to our problem, we have that
	\begin{equation}\label{ynb}
	\frac{y^{(i)}(t)}{y(t)}= B_{i}\big(\lambda+z(t),z'(t),\dots,z^{(i-1)}(t)\big)\,,
	\end{equation}
so that, replacing in the equation \eqref{poinca} we get
	\begin{equation}\label{poincaz}
	\sum_{i=0}^{n}(a_i+r_i(t))B_i\big(\lambda+z(t),z'(t),\dots,z^{(i-1)}(t)\big)=0\,,
	\end{equation}
with $a_n=1$ and $r_n(t)\equiv 0$. The Eq. $\!$\eqref{poincaz} is a nonlinear differential equation of order $n-1$. Hence, from \eqref{reducedz} we have that 
\begin{equation}\label{bilz}
    B_i\big(\lambda+z(t),z'(t),\dots,z^{(i-1)}(t)\big)=\sum_{j=0}^{i}{i \choose j} \lambda^{i-j}B_{j}(z(t),\dots,z^{(j-1)}(t))\,.
\end{equation}
Therefore, replacing \eqref{bilz} in \eqref{poincaz} 
and changing the order of the sums, we can rewrite the equation \eqref{poincaz} as
		\begin{equation}\label{poincazbell}
			\sum_{i=0}^{n}\sum_{j=0}^{n-i}{i+j \choose j} (a_{i+j}+r_{i+j}(t))\lambda^jB_{i}\big(z(t),z'(t),\dots,z^{(i-1)}(t)\big)=0\,.
		\end{equation}
It follows that equation \eqref{poincaz} has an independent part of $z$ and its  derivatives given by \eqref{prl}, 
since $B_0=1$ and $\lambda$ is a root of $P(a;x)$. Concerning the linear part of \eqref{poincaz} we have that 
by the recursion formula \eqref{complebell}, it follows easily that the unique linear term of the complete Bell polynomial $B_i$ is $x_i$\,. Hence,  the differential linear part with constant coefficients of equation \eqref{poincaz} is 
$$
\sum_{i=1}^{n}\sum_{j=1}^{i}{i \choose j} a_i\lambda^{i-j} z^{(j-1)}(t)  = \sum_{j=1}^{n} \sum_{i=j}^{n} {i \choose j} a_i\lambda^{i-j} z^{(j-1)}(t).
$$
Equality \eqref{linearz} follows just comparing the previous expression with the well known formula
$$
\frac{1}{j!}\frac{\partial^{j}}{\partial x^j}P(a;\lambda)=\sum_{i=j}^{n} a_i{i \choose j} \lambda^{i-j}\,.
$$

Similarly, note that equation \eqref{poincaz} also contains a variable linear part given by \eqref{onedee}, in view of $r_n\equiv 0$, so that $\dfrac{\partial^n}{\partial x^n}P(r;\lambda)\equiv 0$.

To end the proof of Theorem \ref{theo1}, we now focus in the non linear part of Eq. \eqref{poincaz}. Taking into account the linear differential part of \eqref{poincaz} and \eqref{onedee} follows  that the non linear part of equation \eqref{poincaz} corresponds to  
\begin{equation}\label{deff}
\begin{split}
\mathcal{ F}(t,z,\dots,z^{(n-2)}) := & \sum_{i=0}^{n}(a_i+r_i(t))B_{i}\big((\lambda+z(t)),z'(t),\dots,z^{(i-1)}(t)\big) \\
& \hspace{2cm} - \mathcal{ D}z(t) -P(r(t);\lambda)- \mathcal{ L}(t,z).
\end{split}	
\end{equation}
Note that, $\mathcal{F}$ is a polynomial in the derivatives of $z$ with monomials of degree bigger than 1. Moreover, by the equation \eqref{poincaz} it follows that $\mathcal{ F}$ is a polynomial of degree $n$. In fact, one has the following result.
\begin{Lemma}\label{nonlinearp}
	The non linear part of equation \eqref{poincaz}, i. e., the map $\mathcal{ F}$ defined in eq. \eqref{deff} can be written by \eqref{formf}. Moreover, $	\mathcal{F}$ does not depend of $z^{(n-1)}$\,, i.e., is a polynomial in $n-1$ variables.
\end{Lemma}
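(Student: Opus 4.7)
The plan is to start from the fully expanded form \eqref{poincazbell} of the equation for $z$ and peel off the three pieces $P(r(t);\lambda)$, $\mathcal{D}z$, and $\mathcal{L}(t,z)$ one at a time, identifying the leftover as \eqref{formf}. Everything then reduces to careful bookkeeping with the indices, with the only non-trivial input being the fact (already noted in the text after \eqref{reducedz}) that the unique linear monomial in $B_i(x_1,\dots,x_i)$ is $x_i$.

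First I would isolate the $i=0$ contribution to \eqref{poincazbell}. Since $B_0=1$, this contribution equals $\sum_{j=0}^{n}(a_j+r_j(t))\lambda^j$; using $P(a;\lambda)=0$ and $r_n\equiv 0$ it collapses to $P(r(t);\lambda)$ as in \eqref{prl}. Next, for each $i\ge 1$ I would split $B_i(z,\dots,z^{(i-1)})=z^{(i-1)}+\bigl(B_i(z,\dots,z^{(i-1)})-z^{(i-1)}\bigr)$, the first summand being the unique linear piece and the second a polynomial with all monomials of degree $\ge 2$. Summing the linear parts over \eqref{poincazbell} and exchanging the order of summation gives
$$\sum_{k=1}^{n} z^{(k-1)} \sum_{m=k}^{n}\binom{m}{k}(a_m+r_m(t))\lambda^{m-k}.$$
By the Taylor identity $\frac{1}{k!}\partial_x^k P(c;\lambda)=\sum_{m=k}^n\binom{m}{k}c_m\lambda^{m-k}$ applied separately to $c=a$ and $c=r$, the $a$-part becomes $\mathcal{D}z$ as in \eqref{linearz}, while the $r$-part becomes $\mathcal{L}(t,z)$ as in \eqref{onedee}; the upper index of the latter sum is $n-1$ rather than $n$ because $r_n\equiv 0$.

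Subtracting $P(r;\lambda)+\mathcal{D}z+\mathcal{L}(t,z)$ from \eqref{poincazbell} leaves exactly
$$\sum_{i=1}^{n}\sum_{j=0}^{n-i}\binom{i+j}{j}(a_{i+j}+r_{i+j}(t))\lambda^{j}\bigl(B_i(z,z',\dots,z^{(i-1)})-z^{(i-1)}\bigr),$$
and since $B_1(x_1)-x_1=0$ the $i=1$ term vanishes, producing exactly the expression \eqref{formf} for $\mathcal{F}$. Comparing with \eqref{deff} finishes the first assertion.

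For the final claim that $\mathcal{F}$ is a polynomial in only $n-1$ variables, I would invoke the recursion \eqref{complebell} once more: writing
$$B_{i}(x_1,\dots,x_{i})-x_{i}=\sum_{j=0}^{i-2}\binom{i-1}{j}B_{i-1-j}(x_1,\dots,x_{i-1-j})\,x_{j+1},$$
one reads off that $B_i-x_i$ depends only on $x_1,\dots,x_{i-1}$. For the top index $i=n$ this shows that $B_n(z,\dots,z^{(n-1)})-z^{(n-1)}$ is free of $z^{(n-1)}$, and for $i\le n-1$ the polynomial $B_i-z^{(i-1)}$ already involves at most $z^{(i-2)}$, which is no higher than $z^{(n-3)}$. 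Hence each summand in \eqref{formf} involves only $z,z',\dots,z^{(n-2)}$, as required. I do not foresee any real obstacle; the only place that needs attention is ensuring the double sums and binomial identities are indexed consistently when extracting $\mathcal{D}z$ and $\mathcal{L}(t,z)$.
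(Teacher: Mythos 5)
Your proof is correct and follows essentially the same route as the paper: start from the re-indexed form \eqref{poincazbell}, peel off the constant term (using $P(a;\lambda)=0$), extract the linear piece via the Taylor identity $\frac{1}{k!}\partial_x^k P(c;\lambda)=\sum_{m\ge k}\binom{m}{k}c_m\lambda^{m-k}$ applied to $c=a$ and $c=r$, and identify what remains with \eqref{formf}. One place where you are actually a bit more careful than the paper's own proof is the second assertion: the paper only remarks that $B_i(z,\dots,z^{(i-1)})-z^{(i-1)}$ has all monomials of degree $\ge 2$, which by itself would not rule out terms like $z\cdot z^{(n-1)}$. Your observation, via \eqref{complebell}, that $B_i(x_1,\dots,x_i)-x_i$ in fact depends only on $x_1,\dots,x_{i-1}$ is the precise reason $z^{(n-1)}$ cannot appear, and it is worth stating as you do.
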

\begin{proof}
	It follows from binomial type relation \eqref{reducedz} that the equation \eqref{poincaz} is equivalent to \eqref{poincazbell}. The result follows from \eqref{linearz}-\eqref{onedee} and noticing that, the polynomials $B_i(z,z,\dots,z^{(i-1)}))-z^{(i-1)}$ only contains monomials of degree bigger than 1, for all  $i\geq2$\,. 
\end{proof}

Note that, 
since $a_n=1$ and $r_n=0$, one can see that the monomial in $\mathcal{ F}$ of degree $n$ is $z^n$. 	

\medskip
Therefore, taking into account \eqref{linearz},  \eqref{prl}, \eqref{onedee} and Lemma \ref{nonlinearp}, equation \eqref{poinca}, i.e. \eqref{poincaz} can be rewritten as \eqref{dif}. This finishes the proof of Theorem \ref{theo1}
\end{proof}

In order to study Eq. \eqref{dif}, it will be useful to understand operator $\mathcal{D}$.

\begin{Lemma}\label{dpol}
	We denote $P_\mathcal{D}(a;x)$ the characteristic polynomial associated to the linear differential operator $\mathcal{D}$, i.e.,
	\begin{equation}\label{defdpol}
		P_\mathcal{D}(a;x):=\sum_{k=1}^{n} \frac{1}{k!}\frac{\partial^{k}}{\partial x^k}P(a;\lambda)x^{k-1}\,.
	\end{equation}
	Suppose that $\{\lambda\}\cup\{\lambda_l\mid l=1,\dots n-1\}$ are the roots of $P(a;x)$. Then $\lambda_l-\lambda$\,,	for $l=1,\dots n-1$\, are the roots of  $P_{\mathcal{D}}(a;x)$.
\end{Lemma}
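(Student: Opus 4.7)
The plan is to identify $P_{\mathcal{D}}(a;x)$ as (essentially) the Taylor expansion of $P(a;\cdot)$ around the root $\lambda$, and then read off its roots from the factorization $P(a;\mu)=(\mu-\lambda)\,P_{\mathcal{D}}(a;\mu-\lambda)$.

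First I would expand the polynomial $P(a;\cdot)$ as a finite Taylor series around the point $\lambda$: since $P(a;x)$ is a polynomial of degree $n$, one has exactly
\[
P(a;\mu)=\sum_{k=0}^{n} \frac{1}{k!}\frac{\partial^{k}}{\partial x^{k}}P(a;\lambda)\,(\mu-\lambda)^{k}.
\]
Because $\lambda$ is, by hypothesis, a root of $P(a;x)$, the $k=0$ term vanishes, and the remaining series factors as
\[
P(a;\mu)=(\mu-\lambda)\sum_{k=1}^{n}\frac{1}{k!}\frac{\partial^{k}}{\partial x^{k}}P(a;\lambda)\,(\mu-\lambda)^{k-1}=(\mu-\lambda)\,P_{\mathcal{D}}(a;\mu-\lambda),
\]
where in the last equality I use the definition \eqref{defdpol}.

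Next I would use this factorization to locate the roots of $P_{\mathcal{D}}(a;x)$. Fix any index $l\in\{1,\dots,n-1\}$. Since $\lambda_l$ is a root of $P(a;x)$, substituting $\mu=\lambda_l$ in the identity yields
\[
0=P(a;\lambda_l)=(\lambda_l-\lambda)\,P_{\mathcal{D}}(a;\lambda_l-\lambda).
\]
Because $\lambda$ and the $\lambda_l$ have distinct real parts (in particular $\lambda_l\neq\lambda$), we may divide by $\lambda_l-\lambda$ and conclude that $P_{\mathcal{D}}(a;\lambda_l-\lambda)=0$.

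Finally I would verify that these account for all the roots. Since $P(a;x)$ is monic of degree $n$, its leading coefficient equals $\frac{1}{n!}\partial_x^n P(a;\lambda)=1$, so $P_{\mathcal{D}}(a;x)$ is a polynomial of degree exactly $n-1$. The $n-1$ numbers $\lambda_l-\lambda$, $l=1,\dots,n-1$, are pairwise distinct (they inherit distinct real parts from the $\lambda_l$), so they exhaust the zeros of $P_{\mathcal{D}}(a;\cdot)$. There is no real obstacle here: the only thing to notice is that the definition of $P_{\mathcal{D}}$ is just the Taylor expansion of $P(a;\cdot)$ around $\lambda$ with the (vanishing) constant term stripped off.
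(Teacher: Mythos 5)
Your proof is correct and takes essentially the same approach as the paper. The paper re-derives the Taylor expansion of $P(a;\cdot)$ around $\lambda$ by hand, expanding $\lambda_l^i-\lambda^i$ via the binomial theorem and invoking the identity $\frac{1}{j!}\partial_x^j P(a;\lambda)=\sum_{i\ge j}a_i\binom{i}{j}\lambda^{i-j}$, whereas you invoke Taylor's formula for polynomials directly and package it as the clean factorization $P(a;\mu)=(\mu-\lambda)\,P_{\mathcal{D}}(a;\mu-\lambda)$; the underlying argument (expand around $\lambda$, use $P(a;\lambda)=0$, substitute $\mu=\lambda_l$, divide by $\lambda_l-\lambda\neq0$) is identical, and your closing degree-count remark is a nice, if unnecessary, confirmation that no roots are missing.
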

\begin{proof}
 Note that 	if $\lambda\not=\lambda_l$ for $l=1,\dots n-1$\,. We see that for each $l$
 $$
 0=P(a,\lambda_l)-P(a,\lambda)=\sum_{i=0}^{n}a_i (\lambda_l^i-\lambda^i)= \sum_{i=1}^{n}a_i (\lambda_l^i-\lambda^i)\,.
 $$
 On the other hand, for each $i>0$ we have that 
 \begin{equation*}
 	\begin{split}
 	\lambda_l^i-\lambda^i &= \big((\lambda_l-\lambda)+\lambda\big)^i-\lambda^i 
 	= \sum_{k=0}^{i} {i \choose k} \lambda^{i-k}(\lambda_l-\lambda)^k-\lambda^i= \sum_{k=1}^{i} {i \choose k} \lambda^{i-k}(\lambda_l-\lambda)^k \,.
 	\end{split}
 	\end{equation*}
 Thus, replacing it and using that $P(a;\lambda)=0$, we obtain 
 \begin{equation*}
 \begin{split}
 0
 &= \sum_{i=1}^{n}a_i \sum_{k=1}^{i} {i \choose k} \lambda^{i-k}(\lambda_l-\lambda)^k\,.
\end{split}
\end{equation*}
Finally, dividing the previous equality by $\lambda_l-\lambda$\,, we obtain
 $$
 0=\sum_{i=1}^{n}a_i \sum_{k=1}^{i} {i \choose k} \lambda^{i-k}(\lambda_l-\lambda)^{k-1} = \sum_{k=1}^{n} \frac{1}{k!}\frac{\partial^{k}}{\partial x^k}P(a;\lambda)(\lambda_l-\lambda)^{k-1}\,.
 $$
\end{proof}

\begin{Corollary}\label{difertroots}
	If $P(a,x)$ has roots with different real parts, then the polynomial $P_\mathcal{ D}(a,x)$ has roots with different and non zero real parts.
\end{Corollary}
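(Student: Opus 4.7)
The plan is to invoke Lemma \ref{dpol} directly: by that lemma, the roots of $P_\mathcal{D}(a;x)$ are exactly $\gamma_l := \lambda_l - \lambda$ for $l=1,\dots,n-1$. So it suffices to check that the numbers $\{\gamma_l\}$ have distinct and nonzero real parts, and both properties are immediate from linearity of the real part and the hypothesis that $\{\lambda\}\cup\{\lambda_l\}$ have pairwise distinct real parts.

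More precisely, first I would note that for each fixed $l\in\{1,\dots,n-1\}$,
\[
\Re(\gamma_l) = \Re(\lambda_l - \lambda) = \Re(\lambda_l) - \Re(\lambda) \neq 0,
\]
since by assumption $\Re(\lambda_l)\neq \Re(\lambda)$. This rules out purely imaginary roots of $P_\mathcal{D}(a;x)$. Then, for any two indices $l\neq k$ in $\{1,\dots,n-1\}$,
\[
\Re(\gamma_l) - \Re(\gamma_k) = \Re(\lambda_l) - \Re(\lambda_k) \neq 0,
\]
again by the distinct-real-parts hypothesis on the roots of $P(a;x)$. Hence the $\gamma_l$ are pairwise distinct in real part, completing the proof.

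There is no genuine obstacle here: the corollary is a one-line consequence of Lemma \ref{dpol} once the hypothesis on $P(a;x)$ is translated through the map $\mu\mapsto \mu-\lambda$, which is an affine shift in $\C$ and thus preserves both the property of having distinct real parts and the nonvanishing of differences of real parts. The only care needed is to observe that the shift by the real constant $\Re(\lambda)$ cannot create a coincidence or a zero among the real parts of $\{\lambda_l-\lambda\}$ that was not already present (or absent) among $\{\lambda_l\}\cup\{\lambda\}$.
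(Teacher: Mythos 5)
Your proof is correct and coincides with the paper's intended argument: the corollary is stated without a separate proof precisely because it follows immediately from Lemma \ref{dpol} together with the linearity of the real part under the shift $\mu\mapsto\mu-\lambda$, which is exactly what you spell out.
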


\begin{Remark}
	{\rm 
	We shall study Eq. \eqref{poinca}, i.e., Eq. \eqref{poincaz}, by addressing Eq. \eqref{dif}. Taking $b(t)=-P(r(t);\lambda)$ and $R(t,z,\dots,z^{(n-1)})=-\mathcal{L}(t,z)-\mathcal{ F}(t,z,\dots,z^{(n-2)})\,$, a more general problem will be useful.
}
\end{Remark}

\subsection{A more general nonlinear problem}

Hence, we will study, in a general way, equations of the form
\begin{equation}\label{generaleq}
	\mathcal{ D}z(t)=b(t)+R(t,z(t),\dots,z^{(n-2)}(t))\,,
\end{equation}
where $\mathcal{D}$ is a differential operator of order $n-1$ with constant coefficients, the characteristic polynomial, associated to differential operator $\mathcal{ D}$ has $n-1$ roots with non zero real part. Precisely, assume that $\{\gamma_m\}_{m=1}^{n-1}$ are the $n-1$ different roots of the characteristic polynomial of $\mathcal{ D}$\,. We also denote $\alpha_m:=\Re(\gamma_m)$\,, where $\Re$ denotes the real part. Also, we shall assume that $b$ is a locally integrable function and $R:\R\times \C^{n-1}:\to\C$, with $R(\cdot,Z)$ locally integrable for $Z\in\C^{n-1}$. 

\medskip

The method of the variation of constants lead us to define the Green's function of the homogeneous equation $\mathcal{D}z=0$. First, we denote by
\begin{equation}\label{Gammaj}
\Gamma_{i}:= \prod_{k= 1,\,k\not=i}^{n-1}(\gamma_i-\gamma_k).
\end{equation}
 Thus, we define the Green's function $\mathcal{G}(t,s)$ by 
\begin{equation}\label{greenfor}
\mathcal{G}(t,s)= \sum_{i=1}^{n-1}\frac{1}{\Gamma_j}g_{\gamma_i}(t,s),
\end{equation}
where $g_\gamma$ is given by \eqref{defg}. Therefore, our Green operator for equation $\mathcal{D}z=f$ is given by
\begin{equation}\label{greenop}
    G\big[f\big](t):=\int_{t_0}^{\infty} \mathcal{G}(s,t)f(s)\,ds = \sum_{j=1}^{n-1}\frac{1}{\Gamma_j}G_{\gamma_j}\big[f\big](t)\,.
\end{equation}
Notice that for any $i=1,\dots,n-2$ we have that
\begin{equation}\label{derivategi}
	G\big[f\big]^{(i)}(t)= \sum_{j=1}^{n-1}\frac{\gamma_j^i}{\Gamma_j}	G_{\gamma_j}\big[f\big](t)
\end{equation}
and
\begin{equation}\label{derivategnm1}
	G\big[f\big]^{(n-1)}(t)= \sum_{j=1}^{n-1}\frac{\gamma_j^{n-1}}{\Gamma_j}	G_{\gamma_j}\big[f\big](t) +f(t),
\end{equation}
so that $\mathcal{D}(G[f])=f,$ since
$$\sum_{j=1}^{n-1}\frac{\gamma_j^i}{\Gamma_j}=\begin{cases}
0& i=0,\dots,n-2\\
1&i=n-1
\end{cases}.
$$

\begin{Remark}\label{cotadervG}
	{\rm 
		Note that from \eqref{derivategi} one can to estimate for a locally integrable function $f$ the value of $|G\big[f\big]^{(i)}(t)|$ in the following way
		\begin{equation}\label{cotalsup}
		|G\big[f\big]^{(i)} (t)| \leq \sum_{j=1}^{n-1}\left|\frac{\gamma_j^{i}}{ \Gamma_j}G_{\gamma_j}[f](t) \right| \leq	\sum_{j=1}^{n-1}\frac{|\gamma_j|^{i}}{|\Gamma_j|}I_{\Re\gamma_j}[f](t)\leq \|f\|_\infty \sum_{j=1}^{n-1}\frac{|\gamma_j|^{i}}{|\Gamma_j|} I_{\Re\gamma_j}[1](t)\,.
		\end{equation}
	}
\end{Remark}

\medskip
Given $Z\in \C^{n-1}$, with $Z=(z_0,\dots,z_{n-2})$ we consider the norm $\|Z\|=\sum_{i=0}^{n-2}|z_i|$. Returning to Eq. \eqref{generaleq}, we shall assume that there exists a constant $M>0$ and a locally integrable bounded function $\xi_M:[t_0,+\infty)\to [0,+\infty)$ such that for all $t\ge t_0$ and $Z_i\in\C^{n-2}$ with $\|Z_i\|\leq M$\,, for $i=1,2$
\begin{equation}\label{lipchitz}
    |R(t,Z_1)-R(t,Z_2)|\leq \xi_M(t)\|Z_1-Z_2\|.
\end{equation}
Now, given a function $z:[t_0,+\infty)\to \mathbb{C}$, for simplicity we shall denote  $Z=Z(t)$ the $(n-1)$-tuple $Z=(z,z',\dots,z^{(n-2)})$ and consider the function space $\mathcal{C}_0^{n-2}=\mathcal{C}_0^{n-2}[t_0,+\infty)$ defined by a function $z:[t_0,+\infty)\to \mathbb{C}$ belongs to $ \mathcal{C}_0^{n-2}$ if and only if $z^{(i)}$ is continuous and $z^{(i)}(t)\to 0$ as $t\to+\infty$ for $i=0,\dots,n-2$. With some abuse of notation we denote $\|Z(t)\|=\sum_{i=0}^{n-2}|z^{(i)}(t)|$\,. Observe that $\mathcal{C}_0^{n-2}$ is a Banach space with norm $\|z\|_{\mathcal{C}_0^{n-2}}=\sup_{t\in[t_0,+\infty)}\|Z(t)\|$.

For the next result, we denote $\displaystyle \tilde\alpha_j=\sum_{i=0}^{n-2}|\gamma_j^{i}|>0$ for $j=1,\dots,n-1$ and $\displaystyle\tilde{\gamma}=\sum_{k=1}^{n-1}\frac{\tilde{\alpha}_k}{\left|\Gamma_k\right|}$. Recall that $\alpha_j=\Re\gamma_j$ for $j=1,\dots,n-1$.

\begin{Proposition}\label{fixord}
Suppose that $\displaystyle\sum_{i=0}^{n-2}\left|G[b]^{(i)}(t)\right|\to 0$ as $t\to +\infty$. 
Moreover, assume that on $[t_0,\infty)$ $R(\cdot,0)=0$ and $R$ satisfies \eqref{lipchitz} for some constant $M>0$. If there exists a positive constant $0<\epsilon_0< 1$ such that 
\begin{equation}\label{intgcond0}
	\Big\| \sum_{j=1}^{n-1}\frac{\tilde\alpha_j}{\left|\Gamma_j\right|} I_{\alpha_j}\big[\xi_M\big] \Big\|_\infty \leq \epsilon_0 \,, \quad\ \, 
	\left\| \sum_{i=0}^{n-2}\left|G[b]^{(i)}(t)\right|\right\|_\infty \leq(1-\epsilon_0)M\,,
\end{equation}
where $\Gamma_j$ is defined in Eq. \eqref{Gammaj}, then equation \eqref{generaleq} has unique solution $z$ on $[t_0,\infty)$ such that $z^{(i)}(t)\to 0$ as $t\to+\infty$ for all $i=0,1,\dots,n-2$ and satisfies the integral equation
$$
z=G\big[b+ R(\cdot,z,z',\dots, z^{(n-2)})\big].
$$
If, in addition, there exists $0<\beta<\min\{|\alpha_j| \mid 1\leq j\leq n-1  \}$ such that 
\begin{equation}\label{intgcond}
	\Big\| \sum_{j=1}^{n-1}\frac{\tilde\alpha_j}{\left|\Gamma_j\right|} I_{\alpha_j-{\rm sgn}(\alpha_j)\beta}\big[\xi_M\big] \Big\|_\infty < \frac{1}{2},
\end{equation}
then $z^{(i)}=O\big( (I_{\beta}+I_{-\beta})[b] \big)$ for all $i=0,1,\dots,n-2$\,.
\end{Proposition}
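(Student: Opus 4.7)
The plan is to apply the Banach fixed point theorem to the map $T(z):=G\big[b+R(\cdot,Z)\big]$, with $Z=(z,z',\dots,z^{(n-2)})$, on the closed ball $B_M=\{z\in \mathcal{C}_0^{n-2}:\|z\|_{\mathcal{C}_0^{n-2}}\leq M\}$. The first step is to check that $T(B_M)\subset \mathcal{C}_0^{n-2}$. Because $R(\cdot,0)=0$, the Lipschitz assumption \eqref{lipchitz} gives $|R(t,Z(t))|\leq \xi_M(t)\|Z(t)\|$; since $\xi_M$ is bounded (so each $I_{\alpha_j}[\xi_M]$ is bounded by \eqref{intgcond0}) and $\|Z(t)\|\to 0$, Lemma \ref{lxir} yields $I_{\alpha_j}[R(\cdot,Z)](t)\to 0$ for every $j$. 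Hence, by \eqref{cotalsup}, every derivative $G[R(\cdot,Z)]^{(i)}$ vanishes at infinity. Combined with the hypothesis that $\sum_i|G[b]^{(i)}(t)|\to 0$, this shows $Tz\in \mathcal{C}_0^{n-2}$.

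The second step is to show that $T$ maps $B_M$ into itself and is a contraction there. Using \eqref{cotalsup} and the Lipschitz bound,
\[
\sum_{i=0}^{n-2}|G[R(\cdot,Z)]^{(i)}(t)|\leq \sum_{j=1}^{n-1}\frac{\tilde\alpha_j}{|\Gamma_j|}I_{\alpha_j}\bigl[\xi_M\|Z\|\bigr](t)\leq M\sum_{j=1}^{n-1}\frac{\tilde\alpha_j}{|\Gamma_j|}I_{\alpha_j}[\xi_M](t)\leq \epsilon_0 M,
\]
by the first part of \eqref{intgcond0}. Adding the second inequality in \eqref{intgcond0} gives $\|T(z)\|_{\mathcal{C}_0^{n-2}}\leq (1-\epsilon_0)M+\epsilon_0 M=M$. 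The same computation applied to $R(\cdot,Z_1)-R(\cdot,Z_2)$ gives $\|Tz_1-Tz_2\|_{\mathcal{C}_0^{n-2}}\leq \epsilon_0\|z_1-z_2\|_{\mathcal{C}_0^{n-2}}$, i.e.\ $T$ is a contraction with factor $\epsilon_0<1$. Banach's theorem produces a unique fixed point $z\in B_M$, and since $\mathcal{D}\circ G=\operatorname{Id}$ (a consequence of the identities for $\sum_j\gamma_j^i/\Gamma_j$ displayed after \eqref{derivategnm1}), this $z$ solves \eqref{generaleq}.

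For the sharper estimate $z^{(i)}=O\big((I_\beta+I_{-\beta})[b]\big)$ under \eqref{intgcond}, I would run Picard iteration $z_0\equiv 0$, $z_{k+1}=Tz_k$, and control each iterate's derivatives by $(I_\beta+I_{-\beta})[b]$ with geometrically decaying coefficients. Lemma \ref{techilem}(2) gives $I_{\alpha_j}[b]\leq I_{\sgn(\alpha_j)\beta}[b]\leq (I_\beta+I_{-\beta})[b]$, so $|G[b]^{(i)}(t)|\leq \tilde\gamma\,(I_\beta+I_{-\beta})[b](t)$ and thus $\|Z_1(t)\|\leq C_1(I_\beta+I_{-\beta})[b](t)$ with $C_1=\tilde\gamma$. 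Inductively, if $\|Z_k(t)-Z_{k-1}(t)\|\leq C_k(I_\beta+I_{-\beta})[b](t)$, then Lemma \ref{techilem}(3) applied with $a=\xi_M$ gives
\[
I_{\alpha_j}\bigl[\xi_M\|Z_k-Z_{k-1}\|\bigr](t)\leq 2C_k(I_\beta+I_{-\beta})[b](t)\,I_{\alpha_j-\sgn(\alpha_j)\beta}[\xi_M](t);
\]
summing against $\tilde\alpha_j/|\Gamma_j|$ and invoking \eqref{intgcond} yields $C_{k+1}\leq \eta C_k$ with some $\eta<1$. Hence $\sum_k C_k$ is a finite geometric series and the limit $z$ satisfies $\|Z(t)\|\leq K(I_\beta+I_{-\beta})[b](t)$, which is the claim.

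The main obstacle, as I see it, is the bootstrap in the last paragraph: one must use Lemma \ref{techilem}(3) in precisely the right way to keep the weight $(I_\beta+I_{-\beta})[b]$ on the outside while pushing $\xi_M$ into a shifted operator $I_{\alpha_j-\sgn(\alpha_j)\beta}$, whose total norm is exactly what \eqref{intgcond} controls. Everything else is a routine Banach fixed-point argument, although the assumption $R(\cdot,0)=0$ is essential throughout to turn the local Lipschitz bound on $R$ into an effective pointwise size bound $|R(t,Z)|\leq \xi_M(t)\|Z\|$.
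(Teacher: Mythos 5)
Your proposal is correct and follows essentially the same route as the paper: a Banach fixed-point argument for $T(z)=G[b+R(\cdot,Z)]$ on the ball $B_M\subset\mathcal{C}_0^{n-2}$, using $R(\cdot,0)=0$ plus the Lipschitz bound and \eqref{cotalsup} to check self-mapping and contraction, followed by Picard iteration with Lemma \ref{techilem}(3) to push the weight $(I_\beta+I_{-\beta})[b]$ through the Green's operator. The only (cosmetic) difference is that you control the successive increments $\|Z_k-Z_{k-1}\|$ and sum a geometric series, whereas the paper establishes by induction a uniform bound $\|Z_k(t)\|\le\tilde\gamma N\,(I_\beta+I_{-\beta})[b](t)$ with $N=1/(1-2K)$; the two are equivalent.
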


\begin{proof} 
	For functions $z\in \mathcal{C}_0^{n-2}$ define the operator 
	$$
	Tz(t)= \int_{t_0}^{+\infty} \mathcal{G}(t,s)\big[b(s)+R(s, Z(s))\big]\,ds =G\big[b+R(\cdot,Z)\big](t)\,,
	$$
	where $Z=(z,z',\dots,z^{(n-2)})$. Notice that from the definition of Green's operator $G$ and its derivatives \eqref{derivategi} it follows that $(Tz)^{(i)}$ is a continuous function for $i=0,\dots,n-2$. Set
	$B_M=\big\{z\in\mathcal{C}^{n-2}_0 \, :\, \|z\|_{\mathcal{C}_0^{n-2}} \leq M \,\big\}$.
	Note that $B_M$ is a closed subset of $\mathcal{C}^{(n-2)}_0$. Also, if $z\in B_M$\,, we have that $\|z^{(i)}\|_\infty\leq M$ for all $i=0,\dots,n-2$. Hence, it follows that $R$ satisfies \eqref{lipchitz} for $z_1,\,z_2\in B_M$. Thanks to Lemma \ref{techilem}, $I_{\alpha_j}\big[\xi_M\|Z\|\big](t)\to 0$ as $t\to+\infty$ for  $z\in B_M$. Also, by using $R(\cdot,0)=0$, we have that for  $z\in B_M$
	\begin{equation}\label{grz}
	    \big|G^{(i)}[R(\cdot,Z)](t)\big|	\leq  \sum_{j=1}^{n-1} \left|\frac{\gamma_j^i}{\Gamma_j}\right| I_{\alpha_j}\big[R(\cdot,Z)](t)  
	\leq  \sum_{j=1}^{n-1} \left|\frac{\gamma_j^i}{\Gamma_j}\right| I_{\alpha_j}\big[\xi_M\|Z\|\big](t)
	\end{equation}
so that, $G^{(i)}[R(\cdot,Z)](t)\to 0 $ as $t\to+\infty$. Furthermore, we have that
$$	\big|(Tz)^{(i)}(t)\big| \leq 	\big|G[b]^{(i)}(t)\big|+\big|G[R(\cdot,Z)]^{(i)}(t)\big|.$$
Thus, from Lemma \ref{lxir} and conditions on $G[b]^{(i)}$'s we deduce that for all $i=0,\dots, n-2$ it holds $(Tz)^{(i)}(t)\to 0$ as $t\to+\infty$, namely, $Tz\in\mathcal{C}_0^{n-2}$.

On the other hand, we have that $I_{\alpha_j}\big[\xi_M\|Z\|\big](t)\leq M I_{\alpha_j}\big[\xi_M \big](t)$, so that
\begin{equation*}
	\begin{split}
	\big|(Tz)^{(i)}(t)\big|
	\leq & \big|G[b]^{(i)}(t)\big|+M \sum_{j=1}^{n-1} \left|\frac{\gamma_j^i}{\Gamma_j}\right| I_{\alpha_j}[\xi_M ](t).
		\end{split}
	\end{equation*}
 Note that changing order in the sum and using the choice of $\tilde{\alpha}_j$'s we get
$$\sum_{i=0}^{n-2} \sum_{j=1}^{n-1} \Big|\frac{\gamma_j^{i}}{\Gamma_j}\Big|=\sum_{j=1}^{n-1} \frac{1}{|\Gamma_j|} \sum_{i=0}^{n-2} |\gamma_j^{i}|=\sum_{j=1}^{n-1} \frac{\tilde\alpha_j}{|\Gamma_j|}=\tilde{\gamma}$$
and similarly as above,
$$\sum_{i=0}^{n-2} \sum_{j=1}^{n-1} \Big|\frac{\gamma_j^{i}}{\Gamma_j}\Big|I_{\alpha_j}[\xi_M ] 
=\sum_{j=1}^{n-1} \frac{\tilde\alpha_j}{|\Gamma_j|}I_{\alpha_j}[\xi_M ].$$ 
Hence, from \eqref{intgcond} we obtain that
\begin{equation*}
		\begin{split}
	\sum_{i=0}^{n-2}\big|(Tz)^{(i)}(t)\big| 
	\leq& \sum_{i=0}^{n-2}\bigg(\big|G[b]^{(i)}(t)\big|+M \sum_{j=1}^{n-1}\left|\frac{\gamma_j^i}{\Gamma_j}\right| I_{\alpha_j}[\xi_M ](t)\bigg)\\
	\le & \sum_{i=0}^{n-2}\big|G[b]^{(i)}(t)\big|+M \sum_{j=1}^{n-1}\frac{\tilde\alpha_j}{\left|\Gamma_j\right|} I_{\alpha_j}[\xi_M ](t).
		\end{split}
	\end{equation*}
By using assumptions \eqref{intgcond0}, it follows that $\|Tz\|_{\mathcal{C}_0^{n-2}}\le M$. Therefore, we get that $T(B_M)\subseteq B_M$.
	
	In order to show that $T$ has a fixed point we estimate similarly as above. Then, for  $z_1,\,z_2\in B_M$ and denoting $Z_i=(z_i,z_i',\dots,z_i^{(n-2)})$, $i=1,2$, we get that 
	\begin{equation*}
	    \begin{split}
\sum_{i=0}^{n-2}\big| Tz_1^{(i)}(t)- Tz_2^{(i)}(t) \big|
	\leq & \sum_{i=0}^{n-2}\sum_{j=1}^{n-1} \left|\frac{\gamma_j^i}{\Gamma_j}\right| I_{\alpha_j}\big[R(\cdot,Z_1(\cdot))-R(\cdot,Z_2(\cdot))\big](t)  \\
	\leq & \sum_{i=0}^{n-2}\sum_{j=1}^{n-1} \left|\frac{\gamma_j^i}{\Gamma_j}\right| I_{\alpha_j}\big[\xi_M\|Z_1(\cdot)-Z_2(\cdot)\|\big](t)  \\
	\leq & \,\|z_1-z_2\|_{\mathcal{C}_0^{n-2} } \sum_{i=0}^{n-2} \sum_{j=1}^{n-1} \left|\frac{\gamma_j^i}{\Gamma_j}\right| \, I_{\alpha_j}\Big[\xi_M \Big](t)
	  \end{split}
	\end{equation*}
Thus, by using \eqref{intgcond0} we conclude that $T$ is a contractive operator, since $\|Tz_1-Tz_2\|_{\mathcal{C}_0^{n-2} } \leq \epsilon_0 \|z_1-z_2\|_{\mathcal{C}_0^{n-2}}$ and $0<\epsilon_0<1$. Therefore, there exists a unique fixed point $z\in B_M$\,. 

\medskip
Now, consider the sequence $\{z_n\}\subseteq \mathcal{C}^{n-2}_0$ given by $z_0=0$ and for $n\ge 0$
	$$
	z_{n+1}(t)=Tz_n(t)= \int_{t_0}^{+\infty} \mathcal{G}(t,s)\big[b(s)+R(s, Z_n(s))\big]\,ds =G\big[b+R(\cdot,Z_n)\big](t)\,,
	$$
	where $Z_n=(z_n,z_n',\dots,z_n^{(n-2)})$. Since $T$ is a contractive operator $z_n\to z$ as $n\to +\infty$ in $\mathcal{C}^{n-2}_0$. So, we shall prove that for all $t\ge t_0$
	\begin{equation}\label{estzni}
	   \|Z_{n+1}(t)\|=\big\|\sum_{i=0}^{n-2}(Tz_n)^{(i)}(t)\big\| \leq \tilde{\gamma} N \big|\big(I_{\beta}+I_{-\beta}\big)[b](t)\big|,
	\end{equation}
	with a positive constant $N$ given by $N(1-2K)=1$ and $K$ satisfying
	$$\Big\| \sum_{j=1}^{n-1}\frac{\tilde\alpha_j}{\left|\Gamma_j\right|} I_{\alpha_j-\rm sgn(\alpha_j)\beta}\big[\xi_M\big] \Big\|_\infty \leq K<\frac{1}{ 2}.$$
	From \eqref{defls} and \eqref{cotagl} it follows that $|G_{\gamma_j}[b](t)|\leq I_{\rm sgn (\alpha_j)\beta}[b](t)\le (I_\beta + I_{-\beta})[b](t),$ so that, inequality \eqref{estzni} is clearly true for $n=0,1$. Indeed,
	$$|(Tz_1)^{(i)}(t)|=|G[b]^{(i)}(t)|\leq \sum_{j=1}^{n-1}{|\gamma_j^{i}|\over |\Gamma_j|} I_{\alpha_j}[b](t)\leq \sum_{j=1}^{n-1}{|\gamma_j^{i}|\over |\Gamma_j|} (I_\beta +I_{-\beta})[b](t) $$
so that	
$$\sum_{i=0}^{n-2}|(Tz_1)^{(i)}(t)| \leq \sum_{j=1}^{n-1}{\tilde\alpha_j\over |\Gamma_j|} (I_\beta +I_{-\beta})[b](t) $$
Suppose that \eqref{estzni} is true for $n=k$. So, for $n=k+1$ we use \eqref{grz} with $Z=Z_{k+1}$ and obtain that
\begin{equation*}
    \begin{split}
        \big|G^{(i)}[R(\cdot,Z_{k})](t)\big|
	&\leq \tilde{\gamma} N \sum_{j=1}^{n-1} \left|\frac{\gamma_j^i}{\Gamma_j}\right|  I_{\alpha_j}\Big[\xi_M \big( I_{\beta}+I_{-\beta}\big)[b]\Big](t)\\
	&\leq 2\, \tilde{\gamma}\, N \big( I_{\beta}+I_{-\beta}\big)[b](t)\,  \sum_{j=1}^{n-1} \left|\frac{\gamma_j^i}{\Gamma_j}\right| I_{\alpha_j-\rm sgn(\alpha_j)\beta}[\xi_M ](t),
    \end{split}
\end{equation*}
since from Lemma \ref{techilem} we have that 
$$
I_{\alpha_j}\Big[\xi_M \big( I_{\beta}+I_{-\beta}\big)[b]\Big](t) \leq 2 I_{\alpha_j-\rm sgn(\alpha_j)\beta}\big[\xi_M\big] (t) \,  \big(I_{\beta}+I_{-\beta}\big)[b]\big](t).
$$
Then, we obtain that
\begin{equation*}
	\begin{split}
	\big|(Tz_{k})^{(i)}(t)\big| 
	\leq & \sum_{j=1}^{n-1} \Big|\frac{\gamma_j^{i}}{\Gamma_j}G_{\gamma_j}[b](t)\Big|+2\,\tilde{\gamma}\,N \big( I_{\beta}+I_{-\beta}\big)[b](t)\,  \sum_{j=1}^{n-1} \left|\frac{\gamma_j^i}{\Gamma_j}\right| I_{\alpha_j-\rm sgn(\alpha_j)\beta}[\xi_M ](t)\\
	\leq& (I_\beta + I_{-\beta})[b](t) \left[\sum_{j=1}^{n-1} \Big|\frac{\gamma_j^{i}}{\Gamma_j}\Big|+2\,\tilde{\gamma}\,N \sum_{j=1}^{n-1} \left|\frac{\gamma_j^i}{\Gamma_j}\right| I_{\alpha_j-\rm sgn(\alpha_j)\beta}[\xi_M ](t) \right].
		\end{split}
	\end{equation*}
Note that from the choice of $\tilde{\alpha}_j$'s we get
$$\sum_{i=0}^{n-2} \sum_{j=1}^{n-1} \Big|\frac{\gamma_j^{i}}{\Gamma_j}\Big|I_{\alpha_j-\rm sgn(\alpha_j)\beta}[\xi_M ] 
=\sum_{j=1}^{n-1} \frac{\tilde\alpha_j}{|\Gamma_j|}I_{\alpha_j-\rm sgn(\alpha_j)\beta}[\xi_M ].$$ 
Hence, from \eqref{intgcond} we obtain that
\begin{equation*}
	\begin{split}
	\sum_{i=0}^{n-2}\big|(Tz_{k})^{(i)}(t)\big| 
	\leq& \tilde{\gamma}\,(I_\beta + I_{-\beta})[b](t)\left[1+2N \sum_{j=1}^{n-1} \frac{\tilde\alpha_j}{|\Gamma_j|} I_{\alpha_j-\rm sgn(\alpha_j)\beta}[\xi_M ](t)\right]\\
	\le & \tilde{\gamma}\,\big( I_{\beta}+I_{-\beta}\big)[b](t)[1+2KN].
		\end{split}
	\end{equation*}
Since $N=1+2KN$, we obtain that \eqref{estzni} holds for $n=k+1$. Therefore, it holds that $z^{(i)}=O\left((I_\beta+I_{-\beta})[b]\right)$ for all $i=0,\dots, n-2$. This concludes the proof.
\end{proof}

Note that $\displaystyle\sum_{i=0}^{n-2}\left|G[b]^{(i)}(t)\right|\to 0$ as $t\to +\infty$ if and only if  $G_{\gamma_j}[b](t)\to 0$ as $t\to +\infty$ for all $j=1,\dots,n-1$, in view of \eqref{derivategi} and the invertibility of the Vandermonde matrix $V=(a_{ij})_{i,j=1}^{n-1}$ with $a_{ij}=\gamma_{j}^{i-1}$. Also, observe that condition \eqref{intgcond}  implies the first inequality of \eqref{intgcond0} by using Lemma \ref{techilem}.

\bigskip
\section{Existence of solutions for the Poincar\'e Problem}\label{espp}
We return to study Eq. \eqref{dif}, where $\mathcal{ D}$ is defined \eqref{linearz}, $P(r;\lambda)$ and $\mathcal{ L}$ are defined by \eqref{prl} and \eqref{onedee}, respectively, and $\mathcal{ F}$ is the non linear part of the equation given by \eqref{formf}.

Note that given $\omega\in\mathbb{C}$, $\omega\ne 0$ it holds
$$\int_{t_0}^tG_\omega[f](s)\,ds=-{1\over \omega}\int_{t_0}^t f(s)\,ds + {1\over \omega}\left[G_\omega[f](t)-G_{\omega}[f](t_0)\right].$$
Hence, it follows that
  \begin{equation*}
      \begin{split}
          \int_{t_0}^tG[f](s)\,ds 
          =&\,(-1)^{n+1}\prod_{j=1}^{n-1}\gamma^{-1}_j \int_{t_0}^tf(s)\,ds  + \sum_{j=1}^{n-1}{1\over \Gamma_j\gamma_j}\left[G_{\gamma_j}[f](t)-G_{\gamma_j}[f](t_0)\right],
      \end{split}
  \end{equation*}
  in view of $\displaystyle\sum_{j=1}^{n-1}{1\over\Gamma_j\gamma_j}=(-1)^n \prod_{j=1}^{n-1}\gamma^{-1}_j$.
  
We have shown that $\mathcal{ F}(t,z,z',\dots,z^{(n-1)})$  is a polynomial of degree $n$ and all its monomials have degree bigger than 1. Moreover, we wrote explicitly $\mathcal{ F}$ in Lemma \ref{nonlinearp}. Using this, it is easy to see that we can rewrite, for $r(t)=(r_0(t),\dots,r_n(t))$, the non linear part by
\begin{equation}\label{fsdef}
		\mathcal{ F}(t,z,z',\dots,z^{(n-2)}) = 	\mathcal{ F}_a(z,z',\dots,z^{(n-2)}) +	\mathcal{ F}_r(t,z,z',\dots,z^{(n-2)}) \,,
\end{equation}
where
$$
\mathcal{ F}_a(z,z',\dots,z^{(n-2)})= \sum_{i=2}^{n}\sum_{j=0}^{n-i}{i+j \choose j}a_{i+j}\lambda^j\big(B_i(z,z',\dots,z^{(i-1)})-z^{(i-1)}(t)\big)\,,
$$
and
$$
\mathcal{ F}_r(t,z,z',\dots,z^{(n-2)})= \sum_{i=2}^{n-1}\sum_{j=0}^{n-i}{i+j \choose j}r_{i+j}(t)\lambda^j\big(B_i(z,z,\dots,z^{(i-1)})-z^{(i-1)}(t)\big)\,
$$
in view of $r_n\equiv 0$.

Taking into account Lemma \ref{dpol} and Corollary \ref{difertroots}, let $\gamma_j=\lambda_j-\lambda$, $j=1,\dots,n-1$ be the different roots of the characteristic polynomial $P_{\mathcal{D}}$ associated to the differential operator $\mathcal{ D}$ given by \eqref{linearz} and denote by $\alpha_j=\Re\gamma_j\not=0$\, and $\tilde{\alpha}_j=\sum_{i=0}^{n-2}|\alpha_j|^{i} $\,, for $j=1,\dots,n-1$. Note that in this situation $\Gamma_j=\prod_{k=1,k\ne i}^{n-1} (\lambda_i-\lambda_k)$ in view of \eqref{Gammaj}. Recall that $\lambda$ is a fixed root of $P(a;x)$ and  $\tilde{\gamma}=\sum_{k=1}^{n-1}\frac{\tilde{\alpha}_k}{\left|\Gamma_k\right|}$. 
%
\subsection{General results}

\begin{Theorem}\label{orederzun}
	Assume that perturbations $r_k$, $k=0,1,\dots,n-1$ satisfy
	\begin{equation}\label{gpr}
	    \sum_{i=0}^{n-2}\big|G[P(r;\lambda)]^{(i)}(t)\big|\to 0\quad\text{as $t\to +\infty\ \ $ and}
	\end{equation} 
	\begin{equation}\label{cl0}
	\sum_{j=1}^{n-1}\sum_{k=1}^{n-1} \frac{\tilde{\alpha}_j}{k! \left|\Gamma_j\right|}  \bigg\| I_{\alpha_j}\Big[ \frac{\partial^k}{\partial x^k}P(r(\cdot);\lambda) \Big] \bigg\|_\infty <1 \,,
	\end{equation} 
where $P(r;\lambda)$ is given by \eqref{prl}. Then there exists a solution $y$ to equation \eqref{poinca} such that
\begin{equation}\label{dlp}
    \lim_{t\to + \infty} {y_\lambda^{(i)}(t)\over y_\lambda(t)} = \lambda^{i},\qquad i=1,2,\dots,n-1.
\end{equation}
Furthermore, as $t\to + \infty$
\begin{equation}\label{fath1}
\begin{split}
    y_\lambda(t)=\bigg[1&+O\bigg(\sum_{j=1}^{n-1}I_{\alpha_j}\left[\mathcal{L}(\cdot,z)+\mathcal{F}(\cdot,Z)\right] \bigg)\bigg] e^{\lambda(t-t_0)}\exp\left(- \sum_{j=1}^{n-1}{1\over \Gamma_j\gamma_j} G_{\gamma_j}\big[P(r;\lambda)\big](t)\right)\\
    &\times \exp\left( (-1)^n\prod_{k=1}^{n-1}(\lambda_k-\lambda)^{-1}\int_{t_0}^t [P(r;\lambda)(s)+\mathcal{L}(s,z(s))+\mathcal{F}(s,Z(s))]\,ds\right)\,,
\end{split}
\end{equation}
where $\mathcal{L}(\cdot,z)$ and $\mathcal{F}(\cdot,Z)$ are given by \eqref{onedee} and \eqref{formf}, respectively, $z$ satisfies \eqref{dif}, the integral equation
\begin{equation}\label{eiz}
    z=-G\left[P(r;\lambda)+\mathcal{L}(\cdot,z)+\mathcal{F}(\cdot,Z)\right]
\end{equation}
and $z^{(i)}(t)\to0$ as $t\to+\infty$ for all $i=0,\dots,n-2$. In addition, if there exists a constant $0<\beta< \min\{|\alpha_j| \mid 0\leq j\leq n-1\}$  such that
	\begin{equation}\label{cl}
	\sum_{j=1}^{n-1}\sum_{k=1}^{n-1} \frac{\tilde{\alpha}_j}{\left|\Gamma_j\right| k!}  \left\|I_{\alpha_j- \rm sgn(\alpha_j)\beta}\Big[ \frac{\partial^k}{\partial x^k}P(r(\cdot);\lambda) \Big] \right\|_\infty <{1\over 2} \,.
	\end{equation} then $z$ satisfies \eqref{estzi} for $i=0,\dots,n-2$.
\end{Theorem}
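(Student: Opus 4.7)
The plan is to reduce equation \eqref{dif} to the general nonlinear problem \eqref{generaleq} of Proposition \ref{fixord}, by taking $b(t)=-P(r(t);\lambda)$ and $R(t,Z)=-\mathcal{L}(t,z)-\mathcal{F}(t,z,\dots,z^{(n-2)})$. Since $\mathcal{L}$ is linear in $Z$ and every monomial of $\mathcal{F}$ has degree $\ge 2$ (Lemma \ref{nonlinearp}), one has $R(\cdot,0)\equiv 0$. The Lipschitz condition \eqref{lipchitz} on the ball $\|Z\|\le M$ will be checked with
\[
\xi_M(t)\;\le\;\sum_{k=1}^{n-1}\frac{1}{k!}\Bigl|\tfrac{\partial^{k}}{\partial x^{k}}P(r;\lambda)(t)\Bigr|\;+\;M\,\Psi(t,M),
\]
where $\Psi(t,M)$ is a polynomial in $M$ whose coefficients depend on the bounded constants $a_{i+j}$ and on $|r_{i+j}(t)|$ with $i+j\ge 2$ (obtained by differentiating the explicit form \eqref{formf} of $\mathcal{F}$ and using that each monomial of $B_i-z^{(i-1)}$ has degree $\ge 2$).

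Next, the hypotheses of Proposition \ref{fixord} are verified as follows. The condition $G_{\gamma_j}[b](t)\to 0$ follows from \eqref{gpr} and the Vandermonde-invertibility remark following Proposition \ref{fixord}. Condition \eqref{cl0} is precisely the bound
\[
\Bigl\|\sum_{j=1}^{n-1}\tfrac{\tilde\alpha_j}{|\Gamma_j|}I_{\alpha_j}\bigl[\xi^{\mathcal L}_M\bigr]\Bigr\|_\infty <1,
\qquad \xi^{\mathcal L}_M(t):=\sum_{k=1}^{n-1}\tfrac{1}{k!}\bigl|\tfrac{\partial^{k}}{\partial x^{k}}P(r;\lambda)(t)\bigr|.
\]
Fixing $\epsilon_0\in(0,1)$ strictly above the left-hand side of \eqref{cl0}, I would pick $M$ sufficiently small that the additional contribution $M\,\Psi(\cdot,M)$ (whose $I_{\alpha_j}$-norms are finite because \eqref{cl0} controls each $I_{\alpha_j}[r_i]$, $i\ge 1$, via inverting the triangular map $r\mapsto(\partial^{k}P(r;\lambda)/\partial x^{k})_k$) can be absorbed in the slack below $\epsilon_0$. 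Finally, since $G[b]^{(i)}(t)\to 0$ as $t\to+\infty$, by enlarging $t_0$ we can make $\|\sum_{i=0}^{n-2}|G[b]^{(i)}|\|_\infty\le(1-\epsilon_0)M$; the resulting solution on $[t_0',+\infty)$ extends backward to $[t_0,+\infty)$ by the linearity of \eqref{poinca}. Proposition \ref{fixord} then yields $z\in \mathcal{C}_0^{n-2}$ satisfying the integral equation \eqref{eiz} and \eqref{dif}; by Theorem \ref{theo1}, $y_\lambda$ given by \eqref{yeilz} solves \eqref{poinca}.

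For the limits \eqref{dlp} I would simply use the identity $y_\lambda^{(i)}/y_\lambda = B_i(\lambda+z,z',\dots,z^{(i-1)})$ derived in the proof of Theorem \ref{theo1}, together with the continuity of $B_i$ and $z^{(j)}(t)\to 0$, noting $B_i(\lambda,0,\dots,0)=\lambda^{i}$. To derive \eqref{fath1}, plug \eqref{eiz} into $y_\lambda(t)=e^{\lambda(t-t_0)}\exp\bigl(\int_{t_0}^t z(s)\,ds\bigr)$ and apply the antiderivative identity
\[
\int_{t_0}^t G[f](s)\,ds \;=\; (-1)^{n+1}\!\prod_{j=1}^{n-1}\gamma_j^{-1}\!\int_{t_0}^t\! f(s)\,ds \;+\; \sum_{j=1}^{n-1}\tfrac{1}{\Gamma_j\gamma_j}\bigl[G_{\gamma_j}[f](t)-G_{\gamma_j}[f](t_0)\bigr]
\]
stated in the paper, with $f=P(r;\lambda)+\mathcal{L}+\mathcal{F}$. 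The boundary terms $G_{\gamma_j}[f](t_0)$ contribute an overall multiplicative constant (absorbable into normalization), the $G_{\gamma_j}[P(r;\lambda)](t)$ terms appear explicitly in \eqref{fath1}, and $|G_{\gamma_j}[\mathcal L+\mathcal F](t)|\le I_{\alpha_j}[\mathcal L+\mathcal F](t)\to 0$; exponentiating via $e^{o(1)}=1+O(\,\cdot\,)$ produces the $[1+O(\sum_j I_{\alpha_j}[\mathcal L+\mathcal F])]$ prefactor.

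The last claim follows by the same reduction: condition \eqref{cl} delivers exactly \eqref{intgcond} for $\xi^{\mathcal L}_M$, and the $\mathcal F$-contribution to $\xi_M$ is absorbed for $M$ small. The second clause of Proposition \ref{fixord} then gives $z^{(i)}=O((I_\beta+I_{-\beta})[b])$, which is \eqref{estzi}. The main obstacle is the delicate choice of $M$: it must be small enough for the quadratic-and-higher Lipschitz contribution from $\mathcal{F}$ to fit under the budget set by \eqref{cl0}/\eqref{cl}, yet large enough to host the data $G[b]^{(i)}$ in the ball $B_M$; both are reconciled by pushing $t_0$ forward, which is legitimate for an asymptotic statement about a linear ODE. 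The secondary technical point is verifying that $I_{\alpha_j}[\Psi(\cdot,M)]$ is finite, which is done by extracting boundedness of $I_{\alpha_j}[r_i]$ for $i\ge 1$ from the hypothesis \eqref{cl0} via the linear change of variables $r_i\leftrightarrow \tfrac{\partial^{k}}{\partial x^{k}}P(r;\lambda)$.
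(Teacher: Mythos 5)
Your proposal is correct and follows essentially the same route as the paper's own proof: reduce \eqref{dif} to \eqref{generaleq} with $b=-P(r;\lambda)$ and $R=-\mathcal L-\mathcal F$, verify the Lipschitz structure of $R$ by splitting the modulus into the $\mathcal L$ part (controlled directly by \eqref{cl0}/\eqref{cl}) and an $\mathcal F$ part that vanishes as $M\to 0$, choose $M$ small and $t_0$ large to satisfy \eqref{intgcond0}/\eqref{intgcond}, invoke Proposition \ref{fixord}, and then integrate \eqref{eiz} using the stated antiderivative identity for $G$ to get \eqref{fath1}. The only cosmetic difference is that you express the small nonlinear contribution as $M\,\Psi(t,M)$ whereas the paper packages the same fact into an increasing continuous modulus $m(\delta)$ built from $\sup_{\|c\|\le\delta}\|\nabla f_i(c)\|$; both capture that the $\mathcal F$-contribution to the Lipschitz constant vanishes as the ball radius shrinks, and your remark that $I_{\alpha_j}[r_i]$ for $i\ge 2$ is controlled by inverting the triangular map $r\mapsto(\partial^k P(r;\lambda)/\partial x^k)_k$ is exactly the observation the paper records when justifying that $Q_\beta$ is finite.
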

\begin{proof}
We shall apply Proposition \ref{fixord} to equation \eqref{dif} by taking $b(t)=-P(r(t);\lambda)$ and $R(t,z,z',\dots,z^{(n-2)}) = - \mathcal{ L}(t,z)-\mathcal{ F}(t,z,z',\dots,z^{(n-2)}) $\,. 
In order to verify \eqref{intgcond}, we need to estimate  $\mathcal{ F}$. Denoting $X=(x_1,\dots,x_i)\in\R^i$ and $Y=(y_1,\dots,y_i)\in\R^i$\,, we have that 
$$
\big\|f_i(X)-f_i(Y)\big\| \leq \sup_{\|c\|\leq\delta} \big\{\| \nabla f_i(c_1,\dots,c_i)\| \big\} \|X-Y\| \,,
$$
where $f_i$ is given by \eqref{bimzi}, $c=(c_1,\dots,c_i)$ and $X,Y\in \overline{B(0;\delta)}$\,, the closed ball of radius $\delta>0$ with centre in $0$\, in $\R^i$. Since the function  $\nabla f_i$ is continuous, it follows that the map
$$m_i:
\delta \mapsto \sup_{\|c\|\leq\delta} \big\{\| \nabla f_i(c_1,\dots,c_i)\| \big\}
$$
is also continuous. Defining $m(\delta):=\max_{i=1,\dots,n-1}\{m_i(\delta)\}\,$, we have an increasing continuous map  $m:[0,\infty) \to [0,\infty)$ such that $m(0)=0$, $m(\delta)>0$ for all $\delta>0$\,, $m(\delta)\to+\infty$ as $\delta\to+\infty$ and 
\begin{equation}\label{mofm}
\|f_i(X)-f_i(Y)\|\leq m(\delta) \| X - Y \| \,,\quad \text{ for all } \, X,\,Y\in \overline{B(0;\delta)}
\end{equation}
for all $i=1,\dots,n-1$. 
Then,  for $\delta>0$ and $z_1,\,z_2\in \mathcal{C}_0^{n-2}$ such that $\|z_i\|_{\mathcal{C}_0^{n-2}}<\delta$\,, for $i=1,2$\,, we have that 
\begin{equation*}
\begin{split}
\Big|\mathcal{ F}_a\big(t,Z_1(t)\big) -\mathcal{ F}_a\big(t,Z_2(t)\big) \Big| \leq & \!\sum_{i=2}^{n}\sum_{j=0}^{n-i}{i+j \choose j}|a_{i+j}|\,|\lambda|^j\big| f_{i-1}(z_1(t),\dots,z_1^{(i-2)}(t)  \\
& \hspace{3.5cm}-f_{i-1}(z_2(t),\dots,z_2^{(i-2)}(t))\big|\, \\
\leq & \sum_{i=2}^{n}\sum_{j=0}^{n-i}{i+j \choose j}|a_{i+j}|\,|\lambda|^j m(\delta) \|Z_1(t)-Z_2(t)\| 
\end{split}
\end{equation*}
Analogously,
\begin{equation*}
\Big|\mathcal{ F}_r\big(t,Z_1(t)\big) -\mathcal{ F}_r\big(t,Z_2(t)\big) \Big| \leq \sum_{i=2}^{n}\sum_{j=0}^{n-i}{i+j \choose j} |r_{i+j}(t)|\, 
|\lambda|^j m(\delta) \|Z_1(t)-Z_2(t)\| \,.
\end{equation*}
Then, it follows that 
\begin{equation}\label{flipzt}
\begin{split}
\Big|\mathcal{ F}\big(t,Z_1(t)\big) -\mathcal{ F}\big(t,Z_2(t)\big) \Big|  &\leq m(\delta) \|Z_1(t)-Z_2(t)\|  \sum_{i=2}^{n}\sum_{j=0}^{n-i}{i+j \choose j}
\big(|a_{i+j}|+|r_{i+j}(t)|\big)|\lambda|^j .
\end{split}
\end{equation}

On the other hand, from \eqref{onedee} one can see that
\begin{equation*}
\begin{split}
\big|\mathcal{ L}(t,z_1)-\mathcal{ L}(t,z_2) \big| 
&\leq \sum_{k=1}^{n-1} \frac{1}{k!}\left|\frac{\partial^{k}}{\partial x^k}P(r(t);\lambda)\right| \|Z_1(t)-Z_2(t)\|\,,
\end{split}
\end{equation*}
with $Z_i=(z_i,z_i',\dots, z_i^{(n-2)})$, $i=1,2$. Thus, considering  
\begin{equation}\label{xisubm}
\begin{split}
\xi_{\delta}(t) &= \sum_{k=1}^{n-1} \frac{1}{k!} \left|\frac{\partial^k}{\partial x^k}P(r(t);\lambda)\right|  +  m(\delta)\sum_{i=2}^{n}\sum_{j=0}^{n-i}{i+j \choose j}  \big(|a_{i+j}|+|r_{i+j}(t)|\big)|\lambda|^j \,,
\\
&= \sum_{k=1}^{n-1} \frac{1}{k!} \left|\frac{\partial^k}{\partial x^k}P(r(\cdot);\lambda)\right|  +  m(\delta)\sum_{i=2}^{n}\sum_{k=0}^{i-2}{i\choose k}  \big(|a_{i}|+|r_{i}(t)|\big)|\lambda|^k,
\end{split}
\end{equation}
one obtains that 
$$
|\mathcal{ L}(t,z_1)+\mathcal{ F}(t,Z_1(t))-\mathcal{ L}(t,z_2)-\mathcal{ F}(t,Z_2(t)) | \leq \xi_{\delta}(t)\|Z_1(t)-Z_2(t)\|\,,
$$
for all $\delta>0$ and  $Z_1(t)\,,Z_2(t)\in \overline{B(0;\delta)}$\,. Therefore, since $R=-\mathcal{L}-\mathcal{F} $, one has that $R$ satisfy the Lipschitz condition  \eqref{lipchitz}.
On the other hand, for any $0\le \beta <\min\{|\alpha_j| \ : \ 0\le j\le n-1\}$ we have that 
\begin{equation*}
\begin{split}
I_{\alpha_j-{\rm sgn}(\alpha_j)\beta}\big[ \xi_{\delta}\big](t) &\leq m(\delta)\sum_{i=2}^{n}\sum_{k=0}^{i-2}{i\choose k}  |\lambda|^{k}\Big({|a_{i}|\over |\alpha_j-{\rm sgn}(\alpha_j)\beta|} 
+ I_{\alpha_j-{\rm sgn}(\alpha_j)\beta}\big[|r_{i}|\big] (t) \Big) 
\\
& \hspace{2.1cm} + \sum_{k=1}^{n-1} \frac{1}{k!} I_{\alpha_j-{\rm sgn}(\alpha_j)\beta}\Big[ \frac{\partial^k}{\partial x^k}P(r(\cdot);\lambda) \Big]\!(t) \,,
\end{split}
\end{equation*}
in view of $\displaystyle I_{\alpha_j-{\rm sgn}(\alpha_j)\beta}\big[1\big](t)\le {1\over |\alpha_j-{\rm sgn}(\alpha_j)\beta|}\quad\text{for all $t\ge t_0$}$. Let us stress that latter inequality also includes the case $\beta=0$. Now, let us define
$$L_\beta:=\sum_{j=1}^{n-1}\sum_{k=1}^{n-1} \frac{\tilde{\alpha}_j}{\left|\Gamma_j\right| k!}  \left\| I_{\alpha_j- \rm sgn(\alpha_j)\beta}\Big[ \frac{\partial^k}{\partial x^k}P(r(\cdot);\lambda) \Big] \right\|_\infty$$
and
$$Q_\beta := \sum_{j=1}^{n-1}  \sum_{i=2}^{n}\sum_{k=0}^{i-2} \frac{\tilde{\alpha}_j}{\left|\Gamma_j\right|} {i\choose k}  |\lambda|^{k}\Big({|a_{i}|\over |\alpha_j-{\rm sgn}(\alpha_j)\beta|} 
+ \left\|I_{\alpha_j-{\rm sgn}(\alpha_j)\beta}\big[|r_{i}|\big]\right\|_\infty \Big)$$
so that, \eqref{cl0} is equivalent to $L_0<1$ and \eqref{cl} is equivalent to $L_\beta<\dfrac{1}{2}$. Note that \eqref{cl0} implies that $I_\gamma[r_i]$ is bounded for all $i=2,\dots,n-1$ and any $\gamma>0$, so that $Q_\beta$ is well defined.
Thus, we have that for all $\delta>0$ and any $0\le \beta <\min\{|\alpha_j| \, : \, 0\le j\le n-1\}$
\begin{equation*}
\begin{split}
\sum_{j=0}^{n-1}\left|\frac{\tilde{\alpha}_j}{\Gamma_j}\right| I_{\alpha_j-{\rm sgn}(\alpha_j)\beta}\big[\xi_{\delta}\big](t)  \leq&  
 m(\delta)Q_\beta +L_\beta\,,
\end{split}
\end{equation*}
Note that for any function $f$ it holds $I_{\alpha_j}[f](t)\le I_{\alpha_j-\sgn(\alpha_j)\beta}[f](t),$ so that, $L_0\le L_\beta$ and $Q_0\le Q_\beta$.
Hence, by using \eqref{cl0}, $\beta=0$ and properties of the map $m$\, there exists a $\delta_0>0$ such that
\begin{equation*}
0<m(\delta)<{1\over Q_0}(1-L_0)\quad\text{ for all $0<\delta<\delta_0$ y } \quad   m(\delta_0)={1\over Q_0}(1-L_0).
\end{equation*}
Therefore, taking any $0<M<\delta_0$ we have that $m(M)<\frac{1}{Q_0}(1-L_0)$\, and we can choose $\epsilon_0=m(M)Q_0+L_0<1$. Now, by using \eqref{gpr} we can choose $t_0$ so that 
conditions \eqref{intgcond} in Proposition \ref{fixord} are satisfied. Therefore, there exists a solution $z$ of equation \eqref{poincaz} (which is the same to \eqref{dif}) such that $z^{(i)}(t)\to 0 $ as $t\to+\infty$ for all $i=0,1,\dots,n-2$. Also, $z$ satisfies the integral equation \eqref{eiz}. Hence, $y_\lambda$ given by \eqref{yeilz} is a solution to \eqref{poinca}. Moreover, we get that from \eqref{ynb}  and \eqref{bilz}
\begin{equation*}
    \begin{split}
     \frac{y_\lambda^{(i)}(t)}{y_\lambda(t)} 
     &=\lambda^{i} + \lambda^{i-1} z(t) + \sum_{j=2}^{i}{i \choose j} \lambda^{i-j}B_{j}\big(z(t),z'(t),\dots,z^{(j-1)}(t)\big)\,.   
    \end{split}
\end{equation*}
for $i=1,2,\dots,n-1$. The asymptotic behavior \eqref{dlp} follows from $z^{(i)}(t)\to 0 $ as $t\to+\infty$ for all $i=0,1,\dots,n-2$ and in view of $B_j(0,\dots,0)=0$ for all $j\ge 1$. Integrating \eqref{eiz} we find that
  \begin{equation*}
      \begin{split}
          \int_{t_0}^t z(s)\,ds
          =&\,(-1)^n\prod_{j=1}^{n-1}\gamma^{-1}_j \int_{t_0}^t\big[P(r;\lambda)(s)+\mathcal{L}(s,z(s))+\mathcal{F}(s,Z(s))\big]\,ds \\
          &\,- \sum_{j=1}^{n-1}{1\over \Gamma_j\gamma_j}\left\{G_{\gamma_j}\big[P(r;\lambda)+\mathcal{L}(\cdot,z)+\mathcal{F}(\cdot,Z)\big](t)\right.\\
          &\left.\qquad\qquad- G_{\gamma_j}\big[P(r;\lambda)+\mathcal{L}(\cdot,z)+\mathcal{F}(\cdot,Z)\big](t_0)\right\}
      \end{split}
  \end{equation*}
Thus, we conclude \eqref{fath1}.

On the other hand, 
by using \eqref{cl} and again by properties of the map $m$\, there exists a $0<\delta_1\le\delta_0$ such that
\begin{equation*}
0<m(\delta)<{1\over Q_\beta}\Big({1\over 2}-L_\beta\Big)\quad\text{ for all $0<\delta<\delta_1$ y } \quad   m(\delta_1)={1\over Q_\beta}\Big({1\over 2}-L_\beta\Big).
\end{equation*}
Therefore, taking $M$ smaller than the previous one if necessary, so that $0<M<\delta_1$ we have that $m(M)<\frac{1}{Q_\beta}({1\over 2}-L_\beta)$ and 
condition \eqref{intgcond} in Proposition \ref{fixord} is satisfied. Therefore, the solution $z$ of equation  \eqref{poincaz} (which is the same to \eqref{dif}) also satisfies
$z^{(i)}=O\big( (I_{\beta}+I_{-\beta})\big[P(r;\lambda)] \big)$\, for $i=0,\dots,n-2$\,. This finishes the proof.
\end{proof}
%

Let us stress that $\lambda$ is a fixed root of $P(a;x)$ and hypothesis \eqref{gpr}, \eqref{cl0} and \eqref{cl} are satisfied with respect to $\lambda$. Thus, we have recovered Poincar\'e's result \cite{Po}. Moreover, we have provided an asymptotic formula \eqref{fath1} for the found solution $y(t)$. Let us stress that assumption \eqref{cl0} implies the existence of $z$ and \eqref{cl} gives us an estimate for $z$. Moreover, \eqref{cl} implies \eqref{cl0}. A simpler asymptotic formula is deduced from \eqref{fath1}, which is \eqref{afintro} in view of
\begin{equation*}
      \begin{split}
          \int_{t_0}^t z(s)\,ds
          =&\,(-1)^n\prod_{j=1}^{n-1}\gamma^{-1}_j \int_{t_0}^t\big[P(r;\lambda)(s)+\mathcal{L}(s,z(s))+\mathcal{F}(s,Z(s))\big]\,ds + c + o(1),
      \end{split}
  \end{equation*}
as $t\to+\infty$, in view of \eqref{gpr} and  $G_{\gamma_j}[\mathcal{L}(\cdot,z)](t),G_{\gamma_j}[\mathcal{F}(\cdot,Z)](t)\to 0$ for any $j=1,\dots,n-1$ as $t\to+\infty$.

\medskip
Now, we shall prove the existence of fundamental system of solutions $\{y_i\}_i$, $i=1,\dots, n$, assuming conditions of Theorem \ref{orederzun} for each $\lambda_i$, $i=1,\dots, n$, the roots of $P(a;x)$. For simplicity, we will only consider assumption \eqref{cl}. In this way, we recover Perron's result \cite{Perr}. To this purpose, we shall denote $P(r;\lambda_i)$, $\mathcal{D}_i$, $P_{\mathcal{D}_i}$ and $\mathcal{F}_i$ by \eqref{prl}, \eqref{linearz}, \eqref{defdpol} and \eqref{deff} with $\lambda=\lambda_i$ respectively.  We shall assume that $\Re(\lambda_i)\ne \Re(\lambda_j)$ for all $i\ne j$. Also, consider $\mathcal{N}(i):=\{1,\dots,n\}\setminus \{i\}$ and the bijection $\sigma_i:\mathcal{N}(i)\to \{1,\dots,n-1\}$ defined by
$$\sigma_i(j)=\begin{cases}
j&\text{if }j<i\\
j-1&\text{if }i<j
\end{cases}\qquad\text{and}\qquad \sigma_i^{-1}(j)=\begin{cases}
j&\text{if }j<i\\
j+1&\text{if }i<j
\end{cases}.$$
Thus, fixing $i\in\{1,\dots,n\}$ we have that $\gamma_{ij}=\lambda_{\sigma_i^{-1}(j)}-\lambda_i$, $j=1,\dots,n-1$ are the roots of polynomial $P_{\mathcal{D}_i}$ as shown in Lemma \ref{dpol}. Furthermore, Green's operator for equation $\mathcal{D}_iz=f$ is given by
$$
G_i\big[f\big](t):=\int_{t_0}^{\infty} \mathcal{G}_i(s,t)f(s)\,ds = \sum_{j=1}^{n-1}\frac{1}{\Gamma_{ij}}G_{\gamma_{ij}}\big[f\big](t)\,,
$$
where
\begin{equation*}
\Gamma_{ij}:= \prod_{k= 1,\,k\not=j}^{n-1} (\gamma_{ij}-\gamma_{ik})\quad\text{and}\quad G_i(t,s)= \sum_{j=1}^{n-1}\frac{1}{\Gamma_{ij} }g_{\gamma_{ij}}(t,s),
\end{equation*}
with $g_\gamma$ is given by \eqref{defg}. Denote $\alpha_{ij}=\Re(\gamma_{ij})$ and  $\displaystyle\tilde\alpha_{ij}=\sum_{k=1}^{n-2}|\gamma_{ij}^k|$. 
	
\begin{Theorem}\label{base}	
Assume that perturbations $r_k$, $k=0,1,\dots,n-1$ satisfy for every $i=1,\dots,n$
	\begin{equation}\label{gpri}
	    \sum_{j=0}^{n-2}\big|G_i[P(r;\lambda_i)]^{(j)}(t)\big|\to 0\quad\text{as $t\to +\infty\ \ $ and}
	\end{equation} 
there exist constants $0<\beta_i< \min\{|\alpha_{ij}| \mid 0\leq j\leq n-1\}$ for $i=1,\dots,n$ such that
	\begin{equation}\label{cli}
	\sum_{j=1}^{n-1} \sum_{k=1}^{n-1} \frac{\tilde{\alpha}_{ij} }{\left|\Gamma_{ij} \right| k! } \left\| I_{\alpha_{ij}- {\rm sgn}(\alpha_{ij})\beta_i}\Big[ \frac{\partial^k}{\partial x^k}P(r(\cdot);\lambda_i) \Big]\! \right\|_\infty <{1\over 2},
	\end{equation} 
where $P(r;\lambda_i)$ is given by \eqref{prl} with $\lambda=\lambda_i$. Then there exists a fundamental system of solutions $y_i$, $i=1,\dots,n$ to equation \eqref{poinca} such that
\begin{equation}\label{dlpi}
    \lim_{t\to + \infty} {y_i^{(j)}(t)\over y_i(t)} = \lambda_i^{j},\qquad j=1,2,\dots,n-1.
\end{equation}
Furthermore, as $t\to + \infty$ for every $i=1,\dots,n$ \begin{equation}\label{fath2}
    \begin{split}
        y_i(t)=&\,(1+o(1)) e^{\lambda_i(t-t_0)}\\
        &\times \exp\left((-1)^n\prod_{j\in\mathcal{N}(i)}(\lambda_j-\lambda_i)^{-1}\int_{t_0}^t [P(r;\lambda_i)(s)+\mathcal{L}_i(s,z_i(s))+\mathcal{F}_i(s,Z_i(s))]\,ds\right)\,,
    \end{split}
\end{equation} 
where $\mathcal{L}_i(\cdot,z_i)$ and $\mathcal{F}_i(\cdot,Z_i)$ are given by \eqref{onedee} and \eqref{formf} with $\lambda=\lambda_i$, respectively, $z_i$ satisfies differential equation
$$\mathcal{D}_iz+P(r;\lambda_i)+\mathcal{L}_i(t,z)+\mathcal{F}_i\big(t,z,\dots,z^{(n-2)}\big)=0,$$
the integral equation
$$z=- G_i[P(r;\lambda_i) + \mathcal{L}_i(\cdot,z)+\mathcal{F}_i(\cdot,Z)],$$
$z_i^{(j)}(t)\to0$ as $t\to+\infty$ for all $j=1,\dots,n-1$ and $z_i^{(j)}=O\big((I_{\beta_i}+I_{-\beta_i})[P(r;\lambda_i)]\big)$ so that
	$$\frac{y_i^{(j)}(t)}{y_i(t)}=\lambda_i^{j} + O\big((I_{\beta_i}+I_{-\beta_i})[P(r;\lambda_i)]\big)$$ 
	for $j=0,1,\dots,n-2$\,.
	\end{Theorem}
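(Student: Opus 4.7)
The plan is to apply Theorem \ref{orederzun} separately for each root $\lambda_i$ and then upgrade linear independence of the resulting solutions into the fundamental-system property using the distinctness of the real parts $\Re(\lambda_i)$.

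First, fix $i\in\{1,\dots,n\}$. By Lemma \ref{dpol} and Corollary \ref{difertroots}, the roots of $P_{\mathcal{D}_i}(a;x)$ are $\gamma_{ij}=\lambda_{\sigma_i^{-1}(j)}-\lambda_i$ and their real parts $\alpha_{ij}$ are nonzero. Therefore conditions \eqref{gpri} and \eqref{cli} are precisely the hypotheses \eqref{gpr} and \eqref{cl} of Theorem \ref{orederzun} specialized to $\lambda=\lambda_i$. Theorem \ref{orederzun} then yields a function $z_i\in\mathcal{C}_0^{n-2}$ satisfying the Riccati-type equation $\mathcal{D}_i z_i+P(r;\lambda_i)+\mathcal{L}_i(\cdot,z_i)+\mathcal{F}_i(\cdot,Z_i)=0$, the integral equation $z_i=-G_i[P(r;\lambda_i)+\mathcal{L}_i(\cdot,z_i)+\mathcal{F}_i(\cdot,Z_i)]$, the pointwise estimate $z_i^{(j)}=O\bigl((I_{\beta_i}+I_{-\beta_i})[P(r;\lambda_i)]\bigr)$ for $j=0,\dots,n-2$, together with the corresponding solution $y_i$ given by formula \eqref{yeilz} with $\lambda=\lambda_i$, $z=z_i$. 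Both \eqref{dlpi} and the estimate for $y_i^{(j)}/y_i$ are then direct consequences of Theorem \ref{orederzun}.

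Second, to reduce \eqref{fath1} (applied with $\lambda=\lambda_i$, $z=z_i$) to the simpler formula \eqref{fath2}, note that $z_i^{(j)}\to 0$ at infinity. Since the polynomial $B_k-x_k$ has all monomials of degree at least two (as used in the proof of Theorem \ref{theo1}), one finds $\mathcal{F}_i(\cdot,Z_i)\to 0$ at infinity; a similar estimate using Lemma \ref{lxir} and the smallness of the coefficients of $\mathcal{L}_i$ gives $\mathcal{L}_i(\cdot,z_i)\to 0$. Lemma \ref{oer} then yields $G_{\gamma_{ij}}[\mathcal{L}_i(\cdot,z_i)+\mathcal{F}_i(\cdot,Z_i)](t)\to 0$, and by \eqref{gpri} also $G_{\gamma_{ij}}[P(r;\lambda_i)](t)\to 0$. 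Consequently the entire exponential factor $\exp(-\sum_j\Gamma_{ij}^{-1}\gamma_{ij}^{-1}G_{\gamma_{ij}}[\,\cdot\,](t))$ in \eqref{fath1} reduces to $1+o(1)$, and the bracketed prefactor $[1+O(\sum_j I_{\alpha_{ij}}[\mathcal{L}_i+\mathcal{F}_i])]$ does the same by Lemma \ref{oer}. The finite boundary contribution at $t_0$ is absorbed as a multiplicative constant, which we normalize into $y_i$. This delivers \eqref{fath2}.

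Third, we establish that $\{y_1,\dots,y_n\}$ is a fundamental system. Since $\Re(\lambda_i)\neq\Re(\lambda_j)$ for $i\neq j$ (a standing hypothesis), relabel the roots so that $\Re(\lambda_1)>\Re(\lambda_2)>\dots>\Re(\lambda_n)$. Suppose $\sum_{i=1}^n c_i y_i\equiv 0$ on $[t_0,+\infty)$ and let $i_0$ be the smallest index with $c_{i_0}\neq 0$. From \eqref{dlpi} with $j=1$ we obtain $(\log|y_i(t)|)'=\Re(y_i'(t)/y_i(t))\to\Re(\lambda_i)$, hence $|y_i(t)|/|y_{i_0}(t)|\to 0$ for every $i>i_0$. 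Dividing the identity by $y_{i_0}(t)$ and letting $t\to+\infty$ forces $c_{i_0}=0$, a contradiction. Thus the $y_i$'s are $n$ linearly independent solutions of an $n$-th order linear equation, which is the desired fundamental system.

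The substantive analytic work, namely the fixed-point construction, the estimates on $z_i$ and the derivation of \eqref{fath1}, is already contained in Theorem \ref{orederzun}; the only genuinely new ingredient here is the linear-independence argument of the third paragraph, which I expect to be the main (but mild) obstacle, since one must ensure that the subexponential corrections in \eqref{fath2} are genuinely dominated by the exponential factor $e^{\lambda_i t}$—a consequence of the fact that the integrand $P(r;\lambda_i)+\mathcal{L}_i+\mathcal{F}_i$ tends to zero, so its antiderivative grows sub-linearly.
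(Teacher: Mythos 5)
Your proposal is correct, and the first two paragraphs follow the paper's line exactly: apply Theorem \ref{orederzun} with $\lambda=\lambda_i$ for each $i$, then note that the Green-operator and prefactor terms in \eqref{fath1} reduce to $1+o(1)$ (which the paper records just after Theorem \ref{orederzun} in the derivation of \eqref{afintro}). The genuine difference is in the final linear-independence step. The paper computes the Wronskian $W(t)$ directly: factoring $y_i(t)$ from the $i$-th column and using \eqref{dlpi} in the form $y_i^{(j)}/y_i = \lambda_i^j + o(1)$ gives
\[
W(t)= y_1(t)\cdots y_n(t)\Big[\prod_{i>j}(\lambda_i-\lambda_j)+o(1)\Big],
\]
which is nonzero for large $t$ since the $\lambda_i$ are distinct and each $y_i$ is an exponential. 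You instead order the roots by decreasing real part, assume a vanishing linear combination, divide by the dominant $y_{i_0}$, and use the fact (a consequence of \eqref{dlpi} with $j=1$) that $|y_i|/|y_{i_0}|\to 0$ for $i>i_0$. Both arguments are valid and both rest on \eqref{dlpi} plus distinctness of $\Re\lambda_i$; the paper's Vandermonde--Wronskian argument is a little more compact and gives the asymptotics of $W$ as a by-product, while your growth-domination argument is slightly more elementary and self-contained in that it does not appeal to the determinant computation. One point you flag as a possible difficulty---that the subexponential factors in \eqref{fath2} might interfere with the exponential dominance---is automatically handled by the way you phrase the argument, because you derive the growth comparison from $(\log|y_i|)'\to\Re\lambda_i$ rather than from the final asymptotic formula, so there is no gap there.
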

\begin{proof}
From assumptions \eqref{gpri}, \eqref{cli} and Theorem \ref{theo1}, it follows the existence of $y_i$, $i=1,\dots,n$ satisfying \eqref{dlpi}. Hence, we shall prove that $\{y_i\}_i$ is a fundamental system of solutions. Denote by $W$ the Wronskian of the functions $y_i$'s. Note that \eqref{dlpi} is equivalent to
	$$
	\frac{y_i^{(j)}(t)}{y_i(t)} = \lambda_i^{j}+o(1)
	$$
	as $t\to+\infty$. Thus, it follows that as $t\to+\infty$
	\begin{equation*}
		\begin{split}
		W(t) &= y_1(t)y_2(t)\cdots y_n(t) \left[
		\det
		\left(\begin{matrix}  1 & 1&\cdots & 1 \\
		\lambda_1&\lambda_2&\cdots&\lambda_n \\
		\vdots & \vdots & \cdots &\vdots \\
		\lambda_1^{n-1}& \lambda_2^{n-1} & \cdots & \lambda_{n}^{n-1}
		\end{matrix}\right) +o(1)\right]\\
		&=  y_1(t)y_2(t)\cdots y_n (t) \left[ \prod_{i>j}^{n} (\lambda_i-\lambda_j) +o(1)\right]
		\end{split}
	\end{equation*}
	where we have used the Vandermonde rule for the determinant. From assumptions on $\lambda_i$'s it follows that $W(t)\not=0$\,. This completes the proof.	
\end{proof}

In view of Lemma \ref{oer}, equation \eqref{poinca} admits perturbations satisfying either $r_i(t)\to 0$ as $t\to+\infty$ or $r_i\in L^p[t_0,+\infty)$ for some $p\ge 1$.

\subsection{Levinson's and Hartman-Wintner's type results}

In this subsection we present some consequences of our general result, Theorem \ref{orederzun}. First, a weaker version of Levinson's and Hartman-Wintner's type results is stated.

\begin{Theorem}\label{pplev}
Assume that $r_i$, $i=0,\dots,n-1$ in equation \eqref{poinca} satisfy $P(r;\lambda)\in L^1[t_0,\infty)$ and \eqref{cl}. Then there exists a solution $y$ to equation \eqref{poinca} satisfying \eqref{dlp}. Furthermore, 
$$y_\lambda(t)=\bigg[1+O\left(\int_{t}^{+\infty}(I_{\beta}+I_{-\beta})[P(r;\lambda)](s)\, ds \right)\bigg] e^{\lambda(t-t_0)}$$
and its derivatives satisfy \eqref{yjioyi}.
\end{Theorem}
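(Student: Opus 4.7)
The plan is to reduce Theorem \ref{pplev} to Theorem \ref{orederzun} and then simplify the asymptotic output using the $L^1$ hypothesis on $P(r;\lambda)$. First I would verify that \eqref{gpr} follows automatically from $P(r;\lambda)\in L^1[t_0,\infty)$: by Lemma \ref{oer}, $I_{\alpha_j}[P(r;\lambda)](t)\to 0$ as $t\to\infty$ for each $j$, and combining this with \eqref{cotagl} and the representation \eqref{derivategi}, each derivative $G[P(r;\lambda)]^{(i)}(t)$ tends to $0$. Together with the assumed \eqref{cl}, this lets me invoke Theorem \ref{orederzun}, which produces a solution $y_\lambda$ of \eqref{poinca} of the form \eqref{yeilz}, where $z$ satisfies the integral equation \eqref{eiz}, has $z^{(i)}(t)\to 0$, and obeys the pointwise bound $|z^{(i)}(t)|=O((I_\beta+I_{-\beta})[P(r;\lambda)](t))$ for $i=0,\dots,n-2$. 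The asymptotic \eqref{dlp} is part of the conclusion of Theorem \ref{orederzun}.

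Next I would derive the simpler asymptotic formula by controlling $\int_{t_0}^t z(s)\,ds$. By Lemma \ref{oer}, the $L^1$ hypothesis on $P(r;\lambda)$ implies $(I_\beta+I_{-\beta})[P(r;\lambda)]\in L^1[t_0,\infty)$, and the pointwise bound above then forces $z\in L^1[t_0,\infty)$, so that $C:=\int_{t_0}^\infty z(s)\,ds\in\mathbb{C}$ is well defined and
$$\int_{t_0}^t z(s)\,ds=C-\int_t^\infty z(s)\,ds,\qquad \bigg|\int_t^\infty z(s)\,ds\bigg|=O\bigg(\int_t^\infty(I_\beta+I_{-\beta})[P(r;\lambda)](s)\,ds\bigg).$$
The tail tends to $0$, so a Taylor expansion of the exponential gives
$$\exp\bigg(\int_{t_0}^t z(s)\,ds\bigg)=e^{C}\bigg[1+O\bigg(\int_t^\infty(I_\beta+I_{-\beta})[P(r;\lambda)](s)\,ds\bigg)\bigg].$$
Substituting into $y_\lambda(t)=\exp(\lambda(t-t_0)+\int_{t_0}^t z(s)\,ds)$ and absorbing the non-zero constant $e^{C}$ by passing to the rescaled solution $y_\lambda/e^{C}$, which still solves \eqref{poinca}, produces the announced asymptotic formula.

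For \eqref{yjioyi}, I would combine identities \eqref{ynb} and \eqref{bilz} to write
$$\frac{y_\lambda^{(i)}(t)}{y_\lambda(t)}=\lambda^{i}+\sum_{j=1}^{i}\binom{i}{j}\lambda^{i-j}B_{j}\big(z(t),z'(t),\dots,z^{(j-1)}(t)\big).$$
Since $B_j(0,\dots,0)=0$ for $j\ge 1$ and each argument $z^{(k)}$ is of order $(I_\beta+I_{-\beta})[P(r;\lambda)]$, the whole sum has the same order, yielding the claimed estimate.

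The main technical point deserving a moment of care is the absorption of $e^{C}$: because $C$ is a priori complex, one needs the solution space of \eqref{poinca} to be stable under multiplication by complex constants, which it is, so the rescaling $y_\lambda\mapsto y_\lambda/e^{C}$ is legitimate. Beyond that, everything reduces to Theorem \ref{orederzun}, Lemma \ref{oer}, and the elementary expansion $e^{-u}=1+O(u)$ for small $u$; the $L^1$ hypothesis is exactly what makes the integral remainder meaningful and convergent.
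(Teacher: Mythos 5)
Your proof is correct and takes essentially the same route as the paper's: verify \eqref{gpr} from $P(r;\lambda)\in L^1$ via Lemma \ref{oer}, invoke Theorem \ref{orederzun} to obtain $z$ with $z^{(i)}=O((I_\beta+I_{-\beta})[P(r;\lambda)])$, conclude $z\in L^1$, and expand $\exp(-\int_t^\infty z)$ around $1$. You are a bit more explicit than the paper about absorbing the constant $e^{\int_{t_0}^\infty z}$ (the paper carries it silently as $e^{\lambda(t-t_0)+\int_{t_0}^{\infty}z}$) and about re-deriving \eqref{yjioyi} from \eqref{ynb}--\eqref{bilz}, but these are presentational refinements of the same argument.
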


\begin{proof} Thanks to Lemma \ref{oer} we have that  $G_{\gamma_j}[P(r;\lambda)](t)\to0$ as $t\to+\infty$ and \linebreak $G_{\gamma_j}[P(r;\lambda)]\in L^1[t_0,+\infty)$, for all $j=1,\dots,n-1$. Hence, assumption \eqref{gpr} is fulfilled. In view of condition \eqref{cl}, there exists a solution $y_\lambda$ to equation \eqref{poinca} in the form \eqref{yeilz} with $z^{(i)}=O\big((I_{\beta}+I_{-\beta})[P(r;\lambda)]\big)$ for all $i=0,\dots,n-2$. Then, we have that $z\in L^1[t_0,+\infty)$, since $(I_{\beta}+I_{-\beta})[P(r;\lambda)]\in L^1[t_0,\infty)$. 
Note that $|e^x-1|\leq C|x|$ for $x \in\mathbb{R}$ on compact subsets, so that,
\begin{equation*}
    \bigg|\exp\left(-\int_t^{+\infty} z(s)\, ds\right)-1\bigg|\le C \left|\int_t^{+\infty} z(s)\, ds\right|\leq C\int_{t}^{+\infty}(I_{\beta}+I_{-\beta})[P(r;\lambda)](s)\, ds.
\end{equation*}
The result follows from
$$y_\lambda(t)=\left[1+\varepsilon(t)\right]e^{\lambda(t-t_0)+\int_{t_0}^{+\infty}z(s)\, ds}\qquad\text{with}\qquad \varepsilon(t)=\exp\left(-\int_t^{+\infty} z(s)\, ds\right)-1.$$
\end{proof}

\begin{Theorem}\label{pphw}
Assume that $r_i$, $i=0,\dots,n-1$ in equation \eqref{poinca} satisfy $P(r;\lambda)\in L^p[t_0,\infty)$ and $\dfrac{\partial^k}{\partial x^k}[P(r;\lambda)]\in L^{p_k}[t_0,+\infty)$ with $p,p_k\in(1,2]$ for all $k=1,\dots,n-1$. Then there exists a solution $y_\lambda$ for the equation \eqref{poinca} satisfying \eqref{dlp}. Furthermore, as $t\to + \infty$ 
\begin{equation*}
\begin{split}
y_\lambda(t)=&\,(1+\epsilon(t))e^{\lambda(t-t_0)}\\
&\, \times\exp\left(- \sum_{j=1}^{n-1}{1\over \Gamma_j\gamma_j} G_{\gamma_j}\big[P(r;\lambda)\big](t) +(-1)^n\prod_{k=1}^{n-1}(\lambda_k-\lambda)^{-1}\int_{t_0}^t P(r;\lambda)(s)\,ds\right),
\end{split}
\end{equation*}
where
$\displaystyle \epsilon(t)=O\bigg(\sum_{j=1}^{n-1} I_{\alpha_j}[\mathcal{L}(\cdot,z)+ \mathcal{F}(\cdot ,Z)] +\int_t^{+\infty}\left[\big|\mathcal{L}(s,z(s))\big|+ \big|\mathcal{F}(s,Z(s))\big|\right]ds\bigg).$
\end{Theorem}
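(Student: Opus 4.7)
The plan is to apply Theorem \ref{orederzun} and then rearrange the resulting asymptotic formula \eqref{fath1} by separating the integral of $P(r;\lambda)$ from that of $\mathcal{L}+\mathcal{F}$. First, I would verify the hypotheses of Theorem \ref{orederzun}. Since $P(r;\lambda)\in L^p[t_0,\infty)$ with $p\in(1,2]$, Lemma \ref{oer} gives $I_\gamma[P(r;\lambda)](t)\to 0$ as $t\to+\infty$ for every $\gamma$ with $\Re\gamma\ne 0$; together with \eqref{cotagl}, this establishes \eqref{gpr}. Applying the same lemma to $\partial^k P(r;\lambda)/\partial x^k\in L^{p_k}$, the $I_{\alpha_j-\mathrm{sgn}(\alpha_j)\beta}$-norms in \eqref{cl} tend to $0$ as $t_0$ grows, so fixing $\beta<\min_j|\alpha_j|$ and enlarging $t_0$ makes \eqref{cl} hold. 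Theorem \ref{orederzun} then supplies the solution $y_\lambda$, the integral representation \eqref{eiz}, the pointwise bound $z^{(i)}=O\big((I_\beta+I_{-\beta})[P(r;\lambda)]\big)$ for $i=0,\dots,n-2$, and the intermediate formula \eqref{fath1}.

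Next, I would show that $\mathcal{L}(\cdot,z)$ and $\mathcal{F}(\cdot,Z)$ belong to $L^1[t_0,\infty)$. For $\mathcal{L}$, each summand has the form $\tfrac{1}{k!}\,\tfrac{\partial^k P(r;\lambda)}{\partial x^k}\cdot z^{(k-1)}$; since $z^{(k-1)}$ is both bounded (it tends to $0$) and in $L^p$ (by Lemma \ref{oer} applied to the previous bound), interpolation places $z^{(k-1)}$ in $L^{p_k'}\cap L^\infty$ with $p_k'$ the H\"older conjugate of $p_k$, using $p_k'\ge 2\ge p$. H\"older then puts each summand in $L^1$. For $\mathcal{F}$, Lemma \ref{nonlinearp} ensures every monomial has degree $\ge 2$ in $Z$; pairing two factors $z^{(a_1)}z^{(a_2)}$ drawn from $\{z,\dots,z^{(n-2)}\}\subset L^p\cap L^\infty$ via Cauchy--Schwarz puts $z^{(a_1)}z^{(a_2)}\in L^{p/2}\cap L^\infty$, and since $p/2\le 1$ a bounded $L^{p/2}$-function is in $L^1$; the remaining factors in the monomial are bounded and keep the product in $L^1$. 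When the coefficient $(a_{i+j}+r_{i+j})\lambda^j$ contributes an $r_{i+j}$-factor, one pairs it with a single $z^{(a)}$ via H\"older, using the integrability of individual $r_j$'s read off triangularly (starting with $r_{n-1}$ from the $k=n-1$ equation) from the $L^{p_k}$-control on $\partial^k P/\partial x^k$.

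Finally, setting $A=(-1)^n\prod_k(\lambda_k-\lambda)^{-1}$, I would split
\[
A\int_{t_0}^t \bigl[P(r;\lambda)+\mathcal{L}+\mathcal{F}\bigr]\,ds \;=\; A\int_{t_0}^t P(r;\lambda)\,ds \;+\; A\int_{t_0}^\infty(\mathcal{L}+\mathcal{F})\,ds \;-\; A\int_t^\infty(\mathcal{L}+\mathcal{F})\,ds.
\]
The middle integral is a finite constant by the previous step; its exponential absorbs into a rescaling of $y_\lambda$, since any scalar multiple of a solution of the linear equation \eqref{poinca} remains a solution. The last integral tends to $0$ as $t\to+\infty$, so the elementary estimate $|e^x-1|\le C|x|$ on compact sets yields $\exp\big(-A\int_t^\infty(\mathcal{L}+\mathcal{F})\,ds\big)=1+O\big(\int_t^\infty|\mathcal{L}+\mathcal{F}|\,ds\big)$. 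Multiplying by the prefactor $1+O\big(\sum_j I_{\alpha_j}[\mathcal{L}+\mathcal{F}]\big)$ already present in \eqref{fath1} produces $1+\epsilon(t)$ with precisely the stated error. The main obstacle is the $L^1$-bookkeeping for $\mathcal{F}$: one must verify carefully that the $L^{p_k}$-hypothesis on the linear combinations $\partial^k P/\partial x^k$ supplies enough integrability of each individual $r_j$-factor appearing in $\mathcal{F}$ for the H\"older pairing with $z^{(a)}$ to close, uniformly over all monomials and all admissible pairs $(p,p_k)\in(1,2]^2$.
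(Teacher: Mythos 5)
Your proposal takes essentially the same route as the paper: apply Theorem \ref{orederzun} (after observing that enlarging $t_0$ secures \eqref{cl} from the $L^{p_k}$-hypotheses and that Lemma \ref{oer} gives \eqref{gpr}), establish $\mathcal{L}(\cdot,z),\mathcal{F}(\cdot,Z)\in L^1$ by H\"older from $z^{(i)}=O\big((I_\beta+I_{-\beta})[P(r;\lambda)]\big)\in L^q\cap L^\infty$ for all $q\ge p$, and then split the integral in \eqref{fath1} so the tail $\int_t^{\infty}(\mathcal{L}+\mathcal{F})$ feeds into $\epsilon(t)$ via $|e^x-1|\le C|x|$. The one place you are more explicit than the paper is the $L^1$-bookkeeping for $\mathcal{F}$ (the triangular recovery of each $r_j$ from the $\partial^k P/\partial x^k$ and the H\"older pairing), a step the paper disposes of with a single appeal to Lemma \ref{nonlinearp}; your sketch of how to close it is consistent with what the paper intends.
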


\begin{proof}
Again thanks to Lemma \ref{oer} we have that  $G_{\gamma_j}[P(r;\lambda)](t)\to0$ as $t\to+\infty$ and \linebreak $G_{\gamma_j}[P(r;\lambda)]\in L^p[t_0,+\infty)$, for all $j=1,\dots,n-1$. Hence, assumption \eqref{gpr} is fulfilled. In view of $\dfrac{\partial^k}{\partial x^k}[P(r;\lambda)]\in L^{p_k}[t_0,+\infty)$ with $p,p_k\in(1,2]$ for all $k=1,\dots,n-1$, condition \eqref{cl} is also fulfilled, so that there exists a solution $y$ to equation \eqref{poinca} satisfying \eqref{fath1} with $z^{(i)}=O\big((I_{\beta}+I_{-\beta})[P(r;\lambda)]\big)$ for all $i=0,\dots,n-2$. Then, we have that $z\in L^q[t_0,+\infty)$, since $(I_{\beta}+I_{-\beta})[P(r;\lambda)]\in L^q[t_0,\infty)$ for all $q\geq p$. Hence, for each $p_k$ we find $q_k\geq 2$ such that $\dfrac{1}{p_k}+\dfrac{1}{q_k}=1$ and $z^{(k-1)}\in L^{q_k}[t_0,+\infty)$, so that $\dfrac{\partial^k}{\partial x^k}[P(r;\lambda)] z
^{(k-1)}\in L^1[t_0,+\infty)$ for all $k=1,\dots,n$. Thus, it follows that $\mathcal{L}(\cdot,z)\in L^1[t_0,+\infty)$. Similarly, $\mathcal{F}(\cdot,Z)\in L^1[t_0,+\infty)$ in view of Lemma \ref{nonlinearp}. Then
  \begin{equation*}
      \begin{split}
          \int_{t_0}^t z(s)\,ds
          =&\,c - \sum_{j=1}^{n-1}{1\over \Gamma_j\gamma_j} G_{\gamma_j}\big[P(r;\lambda)+\mathcal{L}(\cdot,z)+\mathcal{F}(\cdot,Z)\big](t)+(-1)^n\prod_{j=1}^{n-1}\gamma^{-1}_j \int_{t_0}^t P(r;\lambda)(s) \,ds\\
          &\,+(-1)^n \prod_{j=1}^{n-1}\gamma^{-1}_j \int_{t}^{+\infty} \big[\mathcal{L}(s,z(s))+\mathcal{F}(s,Z(s))\big]\,ds,
      \end{split}
  \end{equation*}
for some constant $c$ and the results follows by using that $|e^x-1|\leq C|x|$ for $x \in\mathbb{R}$ on compact subsets.
\end{proof}

Note that an explicit estimate of $\epsilon$ in terms of perturbations $r_i$'s (independent of $z$) can be found by using $z^{(i)}=O\big((I_{\beta}+I_{-\beta})[P(r;\lambda)]\big)$ and expressions of $\mathcal{L}(\cdot,z)$ and $\mathcal{F}(\cdot,Z)$.

\medskip

Following ideas presented in \cite{FP2}, we shall take an  advantage of integral equation for $z$. Precisely, denote
\begin{equation}
    \theta=-G[P(r;\lambda)]\qquad\text{and}\qquad R(\cdot,Z)=\mathcal{L}(\cdot,z) + \mathcal{F}(\cdot,Z),
\end{equation}
so that, $z$ is a solution to \eqref{dif} if and only if $u=z-\theta$ is a solution to
\begin{equation}\label{eu}
    u=-G\left[R(\cdot,\theta)+\mathcal{L}(\cdot,u)+\tilde{\mathcal{F}}(\cdot,U)\right],
\end{equation}
where $\tilde{\mathcal{F}}(\cdot,U)=\mathcal{F}(\cdot,U+\Theta)- \mathcal{F}(\cdot,\Theta)$, $U=(u,u',\dots,u^{(n-2)})$ and $\Theta=(\theta,\theta',\dots,\theta^{(n-2)})$. Applying Proposition \ref{fixord} to \eqref{eu}, we obtain the following fact.

\begin{Theorem}\label{tut}
Assume that $r_i$, $i=0,\dots,n-1$ in equation \eqref{poinca} satisfy $R(\cdot,\theta)\in L^1[t_0,\infty)$ and \eqref{cl}. Then there exists a solution $y_\lambda$ for the equation \eqref{poinca} satisfying \eqref{dlp} and as $t\to + \infty$ 
\begin{equation*}
    \begin{split}
        y_\lambda(t)=&\, [1+\varepsilon(t)] e^{\lambda(t-t_0)}\\
        &\,\times \exp\bigg(- \sum_{j=1}^{n-1}{1\over \Gamma_j\gamma_j} G_{\gamma_j}\big[P(r;\lambda)\big](t)  +(-1)^n\prod_{k=1}^{n-1}(\lambda_k-\lambda)^{-1}\int_{t_0}^t P(r;\lambda)(s)\,ds\bigg),
    \end{split}
\end{equation*}
with $\displaystyle\varepsilon(t)=O\left(\int_{t}^{+\infty}(I_{\beta}+I_{-\beta})[R(\cdot,\theta)](s)\, ds \right).$
\end{Theorem}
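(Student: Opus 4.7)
The plan is to substitute $z=\theta+u$ with $\theta=-G[P(r;\lambda)]$ into equation \eqref{dif}: since $\mathcal{D}\theta=-P(r;\lambda)$, the forcing $P(r;\lambda)$ cancels, and using the linearity of $\mathcal{L}$ in its second entry together with the identity $\mathcal{F}(\cdot,U+\Theta)-\mathcal{F}(\cdot,\Theta)=\tilde{\mathcal{F}}(\cdot,U)$ (which vanishes at $U=0$), one obtains the fixed-point problem \eqref{eu}. This is an equation of the form \eqref{generaleq} with $b=-R(\cdot,\theta)$ and nonlinearity $-\mathcal{L}(\cdot,u)-\tilde{\mathcal{F}}(\cdot,U)$, to which Proposition \ref{fixord} will be applied.

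To verify the hypotheses of Proposition \ref{fixord}, observe first that $R(\cdot,\theta)\in L^1[t_0,\infty)$ combined with Lemma \ref{oer} gives $G_{\gamma_j}[R(\cdot,\theta)](t)\to 0$ as $t\to\infty$, so \eqref{derivategi} yields $\sum_{i=0}^{n-2}|G[b]^{(i)}(t)|\to 0$ and its supremum can be made arbitrarily small by enlarging $t_0$. For the Lipschitz estimate, note that if $\|U_1\|,\|U_2\|\le M$, then $U_j+\Theta$ lies in the ball of radius $M+\|\Theta\|_\infty$, and repeating the mean-value computation carried out in the proof of Theorem \ref{orederzun} yields a Lipschitz coefficient $\xi_{M+\|\Theta\|_\infty}$ of the same structural form as \eqref{xisubm}. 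Since $\theta^{(i)}(t)\to 0$ by Lemma \ref{oer} and \eqref{derivategi} (so $\|\Theta\|_\infty$ shrinks as $t_0$ grows) and since condition \eqref{cl} already controls the relevant integrals of the derivatives of $P(r;\lambda)$, both \eqref{intgcond0} and \eqref{intgcond} can be fulfilled by choosing $M$ small and $t_0$ large. Proposition \ref{fixord} then produces a unique $u\in\mathcal{C}_0^{n-2}[t_0,\infty)$ with $u^{(i)}=O\big((I_\beta+I_{-\beta})[R(\cdot,\theta)]\big)$ for $i=0,\dots,n-2$, and $z=\theta+u$ solves \eqref{dif}; the limits \eqref{dlp} then follow exactly as in Theorem \ref{orederzun}.

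For the asymptotic formula, decompose $\int_{t_0}^t z = \int_{t_0}^t\theta + \int_{t_0}^t u$. Using the integration identity derived immediately after the statement of Theorem \ref{orederzun}, the first integral equals
\begin{equation*}
\int_{t_0}^t\theta(s)\,ds = (-1)^n\prod_{j=1}^{n-1}(\lambda_j-\lambda)^{-1}\int_{t_0}^t P(r;\lambda)(s)\,ds - \sum_{j=1}^{n-1}\frac{G_{\gamma_j}[P(r;\lambda)](t)}{\Gamma_j\gamma_j} + C_0
\end{equation*}
for some constant $C_0$ collecting the boundary terms at $t_0$. For the second integral, $(I_\beta+I_{-\beta})[R(\cdot,\theta)]\in L^1[t_0,\infty)$ forces $u\in L^1[t_0,\infty)$, so $\int_{t_0}^t u(s)\,ds = C_1 - \int_t^{\infty}u(s)\,ds$ with $\left|\int_t^{\infty}u(s)\,ds\right|=O\!\big(\int_t^{\infty}(I_\beta+I_{-\beta})[R(\cdot,\theta)](s)\,ds\big)$. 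Exponentiating $\int_{t_0}^t[\lambda+z]$ in \eqref{yeilz}, absorbing the constant factor $e^{C_0+C_1}$ into the multiplicative freedom of the solution and applying $|e^x-1|\le C|x|$ on bounded sets to $\exp\!\big(-\int_t^{\infty}u\big)$ yields the desired formula with the stated $\varepsilon(t)$. The main obstacle is the Lipschitz verification for $\tilde{\mathcal{F}}$: because the polynomial is now expanded about $\Theta$ rather than about $0$, its coefficients inherit a dependence on $\Theta$; the essential observation is that $\|\Theta\|_\infty$ becomes negligible for $t_0$ large, so the estimate reduces to a mild perturbation of the one used in Theorem \ref{orederzun}.
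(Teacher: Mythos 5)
Your argument is correct and follows essentially the same route as the paper's proof: substitute $z=\theta+u$ into the integral equation, apply Proposition~\ref{fixord} to the shifted fixed-point problem~\eqref{eu}, deduce $u^{(i)}=O\big((I_\beta+I_{-\beta})[R(\cdot,\theta)]\big)$ and $u\in L^1$, then integrate $\theta$ via the identity at the start of Section~\ref{espp} and absorb the tail of $u$ into $\varepsilon(t)$. The only difference is that you spell out the Lipschitz verification for $\tilde{\mathcal{F}}$ around the non-zero base point $\Theta$ (exploiting the smallness of $\|\Theta\|_\infty$ for large $t_0$), a point the paper silently takes for granted, and you carry the sign $(-1)^n$ correctly where the paper's proof briefly drops it.
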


\begin{proof} Thanks to Lemma \ref{oer} we have that  $G_{\gamma_j}[R(\cdot,\theta)](t)\to0$ as $t\to+\infty$ and \linebreak $G_{\gamma_j}[R(\cdot,\theta)]\in L^1[t_0,+\infty)$, for all $j=1,\dots,n-1$. Hence, assumption $G[R(\cdot,\theta)](t)\to0$ as $t\to+\infty$ is fulfilled. In view of condition \eqref{cl} and Proposition \ref{fixord}, there exists a solution $u$ to equation \eqref{eu} with $u^{(i)}=O\big((I_{\beta}+I_{-\beta})[R(\cdot,\theta)]\big)$ for all $i=0,\dots,n-2$. Then, we have that $u\in L^1[t_0,+\infty)$, since $(I_{\beta}+I_{-\beta})[R(\cdot,\theta)]\in L^1[t_0,\infty)$. Using \eqref{yeilz} and $z=u+\theta$ the result follows from
$$y_\lambda(t)=\left[1+\varepsilon(t)\right]e^{\lambda(t-t_0)+\int_{t_0}^t \theta(s)\, ds+\int_{t_0}^{+\infty}u(s)\, ds}\quad\text{with}\quad \varepsilon(t)=\exp\left(-\int_t^{+\infty} u(s)\, ds\right)-1$$
and
  \begin{equation*}
      \begin{split}
          \int_{t_0}^t \theta(s)\,ds
          =&\,- \sum_{j=1}^{n-1}{1\over \Gamma_j\gamma_j} G_{\gamma_j}\big[P(r;\lambda)\big](t) -\prod_{j=1}^{n-1}\gamma^{-1}_j \int_{t_0}^t P(r;\lambda)(s) \,ds + c,
      \end{split}
  \end{equation*}
for some constant $c$.
\end{proof}

\begin{Theorem}\label{teots}
Assume that $r_i$, $i=0,\dots,n-1$ in equation \eqref{poinca} satisfy $R(\cdot,\theta)\in L^p[t_0,\infty)$ and $\dfrac{\partial^k}{\partial x^k}[P(r;\lambda)]\in L^{p_k}[t_0,+\infty)$ with $p,p_k\in(1,2]$ for all $k=1,\dots,n$ and \eqref{cl}. Then there exists a solution $y_\lambda$ for the equation \eqref{poinca} satisfying \eqref{dlp}. Furthermore, as $t\to + \infty$ 
\begin{equation*}
    \begin{split}
        y_\lambda(t)= (1+\varepsilon(t))e^{\lambda(t-t_0)}&\exp\left(- \sum_{j=1}^{n-1}{1\over \Gamma_j\gamma_j} G_{\gamma_j}\big[P(r;\lambda)+R(\cdot, \theta) \big](t)\right)\\
        & \times \exp\left((-1)^n\prod_{k=1}^{n-1}(\lambda_k-\lambda)^{-1}\int_{t_0}^t\left[ P(r;\lambda)(s)+R(s,\theta(s))\right]\,ds\right),
    \end{split}
\end{equation*}
with $\displaystyle \varepsilon(t)=O\bigg(\sum_{j=1}^{n-1} I_{\alpha_j}[\mathcal{L}(\cdot,u)+ \tilde{\mathcal{F}}(\cdot ,U)] +\int_t^{+\infty}\left[\big|\mathcal{L}(s,u(s))\big|+ \big|\tilde{\mathcal{F}}(s,U(s))\big|\right]ds\bigg).$

\end{Theorem}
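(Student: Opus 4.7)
The proof follows the strategy of Theorem \ref{tut}, working with equation \eqref{eu} for $u=z-\theta$ and exploiting the stronger $L^p$ integrability of the data to replace the Lipschitz-style error $\int_t^\infty |u|\,ds$ in Theorem \ref{tut} by an explicit $L^1$ tail depending on $\mathcal{L}(\cdot,u)+\tilde{\mathcal{F}}(\cdot,U)$, in the spirit of Theorem \ref{pphw}.

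First I would apply Proposition \ref{fixord} to \eqref{eu}, taking the source $b=-R(\cdot,\theta)\in L^p[t_0,\infty)$ and the nonlinearity $-\mathcal{L}(\cdot,u)-\tilde{\mathcal{F}}(\cdot,U)$. Lemma \ref{oer} ensures that $G_{\gamma_j}[R(\cdot,\theta)](t)\to 0$ as $t\to+\infty$ and in fact $G_{\gamma_j}[R(\cdot,\theta)]\in L^p$. The Lipschitz condition \eqref{lipchitz} for the nonlinearity is inherited exactly as in the proof of Theorem \ref{orederzun}, the $\theta$-shift in $\tilde{\mathcal{F}}(\cdot,U)=\mathcal{F}(\cdot,U+\Theta)-\mathcal{F}(\cdot,\Theta)$ being handled by the mean value estimate \eqref{mofm} applied to the shifted polynomial. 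Together with the integral smallness condition \eqref{cl}, Proposition \ref{fixord} produces $u\in\mathcal{C}_0^{n-2}$ satisfying \eqref{eu} with
\[ u^{(i)}=O\bigl((I_\beta+I_{-\beta})[R(\cdot,\theta)]\bigr),\qquad i=0,\dots,n-2. \]

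Second, I would promote these pointwise bounds to integrability statements. Since $R(\cdot,\theta)\in L^p$, Lemma \ref{oer} yields $(I_\beta+I_{-\beta})[R(\cdot,\theta)]\in L^p$, so $u^{(i)}\in L^p\cap L^\infty\subset L^q$ for every $q\in[p,\infty]$. Hölder's inequality applied to each summand of \eqref{onedee} with conjugate exponent $p_k'=p_k/(p_k-1)\in[2,\infty)$ gives $\mathcal{L}(\cdot,u)\in L^1[t_0,\infty)$, since $\partial_x^k P(r;\lambda)\in L^{p_k}$ and $u^{(k-1)}\in L^{p_k'}$. For $\tilde{\mathcal{F}}(\cdot,U)$, Lemma \ref{nonlinearp} ensures every monomial of $\mathcal{F}$ has degree $\ge 2$, so every monomial of $\tilde{\mathcal{F}}$ has total degree $\ge 2$ and contains at least one factor of some $u^{(\cdot)}$; pairing the $u^{(\cdot)}$ and $\theta^{(\cdot)}$ factors via Hölder, using $u^{(i)}\in L^2$ (since $p\le 2$) for the monomials of degree exactly two and otherwise $u^{(i)}\in L^q$ with $q$ large, yields $\tilde{\mathcal{F}}(\cdot,U)\in L^1[t_0,\infty)$.

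Third, I would integrate the identity $u=-G[R(\cdot,\theta)+\mathcal{L}(\cdot,u)+\tilde{\mathcal{F}}(\cdot,U)]$ with the help of
\[ \int_{t_0}^t G_\omega[f](s)\,ds=-\frac{1}{\omega}\int_{t_0}^t f(s)\,ds+\frac{1}{\omega}\bigl[G_\omega[f](t)-G_\omega[f](t_0)\bigr], \]
and split $\int_{t_0}^t[\mathcal{L}(\cdot,u)+\tilde{\mathcal{F}}(\cdot,U)]\,ds=\int_{t_0}^\infty-\int_t^\infty$ to absorb a constant and leave an error $O\bigl(\int_t^\infty[|\mathcal{L}|+|\tilde{\mathcal{F}}|]\,ds\bigr)$; the term $G_{\gamma_j}[\mathcal{L}+\tilde{\mathcal{F}}](t)$ is controlled by $I_{\alpha_j}[\mathcal{L}+\tilde{\mathcal{F}}](t)$ via \eqref{cotagl}. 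Adding the resulting formula for $\int_{t_0}^t u$ to the formula for $\int_{t_0}^t\theta$ derived in the proof of Theorem \ref{tut}, exponentiating, absorbing the remaining constant into the normalization of $y_\lambda$, and using $|e^x-1|\le C|x|$ on compact subsets, produces the announced asymptotic with $\varepsilon(t)=O\bigl(\sum_j I_{\alpha_j}[\mathcal{L}(\cdot,u)+\tilde{\mathcal{F}}(\cdot,U)](t)+\int_t^\infty[|\mathcal{L}(s,u(s))|+|\tilde{\mathcal{F}}(s,U(s))|]\,ds\bigr)$.

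The main technical obstacle is verifying $\tilde{\mathcal{F}}(\cdot,U)\in L^1$: this sum of monomials of varying degrees (up to $n$) with coefficients of mixed type (constants $a_j$ versus perturbations $r_j$) demands a case-by-case Hölder argument. The hypothesis $p,p_k\in(1,2]$ is used crucially so that all conjugate exponents lie in $[2,\infty)$ and the interpolation $L^p\cap L^\infty\subset L^q$ for $q\ge p$ suffices to place every factor. Once this is achieved, the remainder of the argument follows the pattern already exercised in Theorems \ref{tut} and \ref{pphw}.
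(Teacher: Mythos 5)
Your proposal matches the paper's proof essentially step for step: applying Proposition \ref{fixord} to equation \eqref{eu} with source $b=-R(\cdot,\theta)$, using Lemma \ref{oer} to get $G_{\gamma_j}[R(\cdot,\theta)]\to0$ and $u^{(i)}=O\bigl((I_\beta+I_{-\beta})[R(\cdot,\theta)]\bigr)$, promoting to $u^{(i)}\in L^q$ for $q\geq p$, deducing $\mathcal{L}(\cdot,u),\tilde{\mathcal{F}}(\cdot,U)\in L^1$ by H\"older with the conjugate exponents $q_k$, and then integrating $u$ and adding $\int_{t_0}^t\theta$ before exponentiating. The additional commentary you give on the Lipschitz verification for $\tilde{\mathcal{F}}$ and the case-by-case H\"older pairing in $\tilde{\mathcal{F}}\in L^1$ supplies detail that the paper compresses into "similarly, in view of Lemma \ref{nonlinearp}," but the route and the level of rigor are the same.
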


\begin{proof}
Again thanks to Lemma \ref{oer} we have that  $G_{\gamma_j}[R(\cdot,\theta)](t)\to0$ as $t\to+\infty$ and \linebreak $G_{\gamma_j}[R(\cdot,\theta)]\in L^p[t_0,+\infty)$, for all $j=1,\dots,n-1$. $G[R(\cdot,\theta)](t)\to0$ as $t\to+\infty$ is fulfilled. In view of condition \eqref{cl} and Proposition \ref{fixord}, there exists a solution $u$ to equation \eqref{eu} with $u^{(i)}=O\big((I_{\beta}+I_{-\beta})[R(\cdot,\theta)]\big)$ for all $i=0,\dots,n-2$. Then, we have that $u\in L^q[t_0,+\infty)$, since $(I_{\beta}+I_{-\beta})[R(\cdot,\theta)]\in L^q[t_0,\infty)$ for all $q\geq p$. Hence, for each $p_k$ we find $q_k\geq 2$ such that $\dfrac{1}{p_k}+\dfrac{1}{q_k}=1$ and $u^{(k-1)}\in L^{q_k}[t_0,+\infty)$, so that $\dfrac{\partial^k}{\partial x^k}[P(r;\lambda)] u
^{(k-1)}\in L^1[t_0,+\infty)$ for all $k=1,\dots,n$. Thus, it follows that $\mathcal{L}(\cdot,u)\in L^1[t_0,+\infty)$. Similarly, $\tilde{\mathcal{F}}(\cdot,U)\in L^1[t_0,+\infty)$ in view of Lemma \ref{nonlinearp}. Using \eqref{yeilz} and $z=\theta+u=\theta-G[R(\cdot,\theta)+\mathcal{L}(\cdot,u)+\tilde{\mathcal{F}}(\cdot,U)]$, we obtain that
  \begin{equation*}
      \begin{split}
          \int_{t_0}^t u(s)\,ds
          =&\,c - \sum_{j=1}^{n-1}{1\over \Gamma_j\gamma_j} G_{\gamma_j}\big[R(\cdot,\theta)+\mathcal{L}(\cdot,u)+\tilde{\mathcal{F}}(\cdot,U)\big](t)+(-1)^n\prod_{j=1}^{n-1}\gamma^{-1}_j \int_{t_0}^t R(\cdot,\theta)(s) \,ds \\
          &\,+(-1)^n \prod_{j=1}^{n-1}\gamma^{-1}_j \int_{t}^{+\infty} \big[\mathcal{L}(s,u(s))+\tilde{\mathcal{F}}(s,U(s))\big]\,ds
      \end{split}
  \end{equation*}
for some constant $c$ and the result follows.
\end{proof}

\subsection{Harris-Lutz type result}

Finally, we discuss a Harris-Lutz's type result, see \cite{FP1,FP3} for cases $n=2,3$ and \cite{CHP2} for $n=4$. We shall assume that $r_i\in L^p[t_0,+\infty)$, $i=0,1,\dots,n-1$ for any $p>2$. Taking into account previous results, we have that there exists a solution $y$ to equation \eqref{poinca} satisfying \eqref{dlp} with $z^{(i)}=O\big((I_\beta+I_{-\beta})[P(r;\lambda)]\big)$ for all $i=0,\dots,n-2$. Then, $z^{(i)}\in L^q[t_0,+\infty)$ for all $i=0,1,\dots,n-2$ and any $q\ge p$. Now, we will use decomposition \eqref{dfi} in order to decompose $\mathcal{F}(t,Z)$ in terms of integrable functions of decreasing order. Replacing \eqref{dfi} in \eqref{fsdef} we get that
\begin{equation}\label{dfz}
    \begin{split}
        \mathcal{F}(t,Z)
        =&\,\sum_{k=2}^n\sum_{i=k}^n\sum_{j=0}^{n-i}{i+j \choose j} (a_{i+j}+r_{i+j}(t))\lambda^j h_{k,i-1}(z,z',\dots,z^{(i-2)})\\
        =&\, \sum_{i=2}^n \tilde a_i h_{2,i-1}(z,z',\dots,z^{(i-2)}) \\ 
        &\,+ \sum_{k=3}^n\bigg[\sum_{i=k}^n \tilde a_i h_{k,i-1}(z,z',\dots,z^{(i-2)}) + \sum_{i=k-1}^{n-1} h_{k-1,i-1}(z,z',\dots,z^{(i-2)})\tilde r_i(t) \bigg], 
    \end{split}
\end{equation}
in view of $r_n\equiv 0$. For simplicity, we have denoted
$$\tilde{a}_i=\sum_{j=0}^{n-i}{i+j \choose j} a_{i+j}\lambda^j\qquad\text{and}\qquad \tilde{r}_i(t)=\sum_{j=0}^{n-i-1}{i+j \choose j} \lambda^j r_{i+j}(t).$$
Then, either $h_{k,i-1}(z,z',\dots,z^{(i-2)})$ belongs to $L^{p/k}[t_0,+\infty)$ or  $L^{1}[t_0,+\infty)$ depending on whether or not $p>k$. Thus, if $p>2$ then $\displaystyle \sum_{i=2}^n \tilde{a}_i h_{2,i-1}(z,z',\dots,z^{(i-2)}) \in L^{p/2}[t_0,+\infty)$
and if $p>k$ then
$$\sum_{i=k}^n \tilde{a}_i h_{k,i-1}(z,z',\dots,z^{(i-2)})+ \sum_{i=k-1}^n h_{k-1,i-1}(z,z',\dots,z^{(i-2)})\tilde{r}_i\in L^{p/k}[t_0,+\infty),$$
in view of $\tilde r_i\in L^p[t_0,+\infty)$. In order to illustrate main ideas assume that $p\in(2,3]$. Hence, $\mathcal{L}(\cdot,z),\mathcal{F}(\cdot,Z)\in L^{p/2}[t_0,+\infty)$, so, $z=\theta_1 + \psi_1$ where $\theta_1^{(k)}\in L^{p}[t_0,+\infty)$ and $\psi_1^{(k)}\in L^{p/2}[t_0,+\infty)$, $k=0,1,\dots,n-2$, with $\psi_1=-G[\mathcal{L}(\cdot,z)+\mathcal{F}(\cdot,Z)]$. Replacing in the integral equation for $z$ we get that
$$z=\theta_1  -G[\mathcal{L}(\cdot,z)+\mathcal{F}(\cdot,Z)]=\theta_1 -G[\mathcal{L}(\cdot,\theta_1 + \psi_1)+\mathcal{F}(\cdot,\Theta_1 + \Psi_1)].$$
Note that $\mathcal{L}(\cdot,\theta_1 + \psi_1)=\mathcal{L}(\cdot,\theta_1) + \mathcal{L}(\cdot,\psi_1)$, $\mathcal{L}(\cdot,\theta_1)\in L^{p/2}[t_0,+\infty)$, $\mathcal{L}(\cdot,\psi_1)\in L^{1}[t_0,+\infty)$ and
\begin{equation*}
    \begin{split}
        \mathcal{F}(\cdot,\Theta_1 + \Psi_1)=&\ \sum_{i=2}^n \tilde{a}_i h_{2,i-1}(\theta_1,\theta_1',\dots,\theta_1^{(i-2)})\\
        &\ + \mathcal{F}(\cdot,\Theta_1 + \Psi_1) -\sum_{i=2}^n \tilde{a}_i h_{2,i-1}(\theta_1,\theta_1',\dots,\theta_1^{(i-2)}),
    \end{split}
\end{equation*}
$\sum_{i=2}^n \tilde{a}_i h_{2,i-1}(\theta_1,\theta_1',\dots,\theta_1^{(i-2)}) \in L^{p/2}[t_0,+\infty)$ and 
$$\mathcal{F}(\cdot,\Theta_1 + \Psi_1) -\sum_{i=2}^n \tilde{a}_i h_{2,i-1}(\theta_1,\theta_1',\dots,\theta_1^{(i-2)})\in L^{1}[t_0,+\infty).$$
Therefore, it follows that $z=\theta_1+\theta_2+\psi_2$, where $\theta_i\in L^{p/i}[t_0,+\infty)$ for $i=1,2$, they are given by 
$$\theta_1=-G[P(r;\lambda)]\quad\text{and}\quad\theta_2=-G\left[\mathcal{L}(\cdot,\theta_1)+\sum_{i=2}^n \tilde{a}_i h_{2,i-1}(\theta_1,\theta_1',\dots,\theta_1^{(i-2)})\right]$$
and $\psi_2\in L^1[t_0,+\infty)$. Consequently, we get that as $t\to +\infty$
$$y(t)=(1+o(1))e^{\lambda(t-t_0)}\exp\left(\int_{t_0}^t\left[ \theta_1(s)+ \theta_2(s)\right]\,ds\right).$$
Note that as $t\to +\infty$
\begin{equation*}
    \begin{split}
        \int_{t_0}^t\left[ \theta_1(s)+ \theta_2(s))\right]\,ds  = &\, c+ o(1)+(-1)^n\prod_{k=1}^{n-1}(\lambda_k-\lambda)^{-1}\int_{t_0}^t\bigg[ P(r;\lambda)(s) +\mathcal{L}(\cdot,\theta_1(s))\\ 
        & \, +\sum_{i=2}^n \tilde{a}_i h_{2,i-1}(\theta_1(s),\theta_1'(s),\dots,\theta_1^{(i-2)}(s)) \bigg]\,ds.
    \end{split}
\end{equation*}
Assuming that $p\in (3,4]$, from latter arguments follows that $z=\theta_1+\theta_2+\psi_2$, with $\theta_i\in L^{p/i}[t_0,+\infty)$ for $i=1,2$ and $\psi_2\in L^{p/3}[t_0,+\infty)$. Let us stress that $\theta_1$ and $\theta_2$ depend only on perturbations $r_i$'s and but $\psi_2$ depends on $z$. Hence, we replace $\psi_1=\theta_2+\psi_2$ in the integral equation for $z$ and decompose the expressions according its order of integrability taking into account \eqref{dfz}. Thus, it follows that $z=\theta_1+\theta_2+\theta_3+\psi_3$, with $\theta_i\in L^{p/i}[t_0,+\infty)$ for $i=1,2,3$ and $\psi_3\in L^{1}[t_0,+\infty)$. The same procedure works out for $p\in(4,5]$, $p\in(5,6]$ and so on. Expressions can be found explicitly in terms of perturbations $r_i$'s, so that $\displaystyle z=\sum_{i=1}^m\theta_i + \psi_m,$
where $\theta_i\in L^{p/i}[t_0,+\infty)$ for $i=1,\dots,m$ are functions depending only on $r_i$, $i=0,\dots,n-1$ and $\psi_m\in L^1[t_0,+\infty)$. Therefore, we deduce the following statement.

\begin{Theorem}\label{hllp}
Assume that $r_i\in L^p[t_0,+\infty)$, for all $i=0,\dots,n-1$ with $p\in(m,m+1]$ for some $m\in\mathbb{N}$, $m\geq 2$. Then there exists a solution $y_\lambda$ for the equation \eqref{poinca} satisfying \eqref{dlp} and as $t\to + \infty$ 
$$y_\lambda(t)=(1+o(1))e^{\lambda(t-t_0)}\exp\left(\int_{t_0}^t\left[ \theta_1(s)+\cdots + \theta_m(s)\right]\,ds\right),$$
where $\theta_i\in L^{p/i}[t_0,+\infty)$ for $i=1,\dots,m$ are functions depending on $r_i$, $i=0,\dots,n-1$, $\theta_1=-G[P(r;\lambda)]$, every $\theta_k$ for $k=2,\dots,n-1$ depends on $\theta_1,\dots,\theta_{k-1}$ and can be found following a recurrence.
\end{Theorem}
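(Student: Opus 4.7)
The plan is to combine the integral representation of $z$ provided by Theorem \ref{orederzun} with the graded polynomial decomposition \eqref{dfz} of $\mathcal{F}$, iterating an ``unfold-and-split'' procedure on the remainder. First I would apply Theorem \ref{orederzun} (its hypotheses \eqref{gpr} and \eqref{cl} follow from $r_i\in L^p$ together with Lemma \ref{oer}) to obtain a solution $z$ satisfying \eqref{dlp} with $z^{(i)}=O\big((I_\beta+I_{-\beta})[P(r;\lambda)]\big)$, so by Lemma \ref{oer} each $z^{(i)}\in L^q[t_0,+\infty)$ for all $q\ge p$. The base case then sets $\theta_1=-G[P(r;\lambda)]$ and $\psi_1=-G[\mathcal{L}(\cdot,z)+\mathcal{F}(\cdot,Z)]$, so that $z=\theta_1+\psi_1$ with $\theta_1^{(i)}\in L^p$; H\"older together with \eqref{dfz} gives $\mathcal{L}(\cdot,z),\mathcal{F}(\cdot,Z)\in L^{p/2}$, hence $\psi_1^{(i)}\in L^{p/2}$ by Lemma \ref{oer}.

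Next I would proceed by induction on $k\le m$, maintaining the invariant $z=\theta_1+\cdots+\theta_k+\psi_k$ where $\theta_j^{(i)}\in L^{p/j}$ depends only on $r_0,\ldots,r_{n-1}$ and the previously constructed $\theta_1,\ldots,\theta_{j-1}$, and $\psi_k^{(i)}\in L^{p/(k+1)}$. The inductive step substitutes $z=\sum_{j=1}^{k-1}\theta_j+\psi_{k-1}$ into the expression for $\psi_{k-1}=-G[\mathcal{L}(\cdot,z)+\mathcal{F}(\cdot,Z)]$ and uses \eqref{dfz}: the linear operator $\mathcal{L}$ splits additively, while the polynomial $\mathcal{F}$ expands into monomials which, thanks to the homogeneity of the pieces $h_{k,i-1}$ produced by the decomposition \eqref{dfi}, can be separated into a part of ``pure order $k$'' in the already-known functions $\theta_1,\ldots,\theta_{k-1}$ (the new $\theta_k$) and a remainder containing at least one factor of $\psi_{k-1}$ or an additional $\tilde r_i$-factor (the new $\psi_k$). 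The $\theta_k$ so defined lies in $L^{p/k}$ by Lemma \ref{oer} applied to $G$ acting on $L^{p/k}$-data, while the remainder lies in $L^{p/(k+1)}$ since each extra $\psi_{k-1}$- or $\tilde r_i$-factor lowers the integrability exponent by $1/p$.

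The main technical obstacle will be the integrability bookkeeping, namely verifying that the many monomials produced by \eqref{dfz} distribute correctly between $\theta_k$ and $\psi_k$. The crucial fact is that $h_{k,i-1}$ is homogeneous of total degree $k$ in its arguments; combining this with H\"older's inequality and the inductive bound $\theta_j^{(i)}\in L^{p/j}$ yields that any monomial entirely built from $\theta_1,\ldots,\theta_{k-1}$ with total multidegree $k$ lies exactly in $L^{p/k}$, whereas replacing one such factor by $\psi_{k-1}^{(i)}\in L^{p/k}$ or adjoining an $r_i\in L^p$ factor drops the exponent to $p/(k+1)$. The recursion terminates at step $m$ because $p\in(m,m+1]$ forces $p/(m+1)\le 1$, giving $\psi_m\in L^1[t_0,+\infty)$.

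Finally, the asymptotic formula follows from \eqref{yeilz}: write
$$y_\lambda(t)=\exp\bigg(\lambda(t-t_0)+\int_{t_0}^t[\theta_1(s)+\cdots+\theta_m(s)]\,ds+\int_{t_0}^t\psi_m(s)\,ds\bigg).$$
Since $\psi_m\in L^1[t_0,+\infty)$, the integral $\int_{t_0}^t\psi_m(s)\,ds$ converges to a finite constant $c$ as $t\to+\infty$, so $\exp\big(\int_{t_0}^t\psi_m(s)\,ds\big)=e^c(1+o(1))$; absorbing $e^c$ into the leading constant (which can be normalized to $1$ by rescaling the solution) yields the stated representation. The convergence \eqref{dlp} is inherited from Theorem \ref{orederzun} applied at the outset.
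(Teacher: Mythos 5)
Your proposal is correct and follows essentially the same route the paper takes: apply Theorem \ref{orederzun} to get $z^{(i)}\in L^q$ for $q\ge p$, peel off $\theta_1=-G[P(r;\lambda)]$, then iteratively resubstitute $z=\sum_j\theta_j+\psi_{k-1}$ into $\psi_{k-1}=-G[\mathcal{L}(\cdot,z)+\mathcal{F}(\cdot,Z)]$, using the graded decomposition \eqref{dfi}--\eqref{dfz} and H\"older to split each step into a new $\theta_k\in L^{p/k}$ plus a remainder $\psi_k\in L^{p/(k+1)}$, terminating at $k=m$ because $p/(m+1)\le1$. The only cosmetic difference is that you package the iteration as a clean induction with an explicit invariant, whereas the paper illustrates the cases $p\in(2,3]$ and $p\in(3,4]$ in detail and then displays the explicit recursive formulas for the $\theta_k$ in the $n=5$ worked example of Section 4; both arguments are the same in substance, including the final normalization step absorbing $e^{\int_{t_0}^\infty\psi_m}$ into the constant multiple of the solution.
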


In next section, we will present main ideas of the proof, addressing the case $n=5$.

\section{Comments and examples}\label{exams}

In this section, we apply our results to a general equation of order 5 and also a particular choice of $a_i$'s and perturbations $r_i$'s.

\subsection{The case n=5}

Here we show how the complete Bell's polynomial permits to get explicitly the equation for $z$ in case $n=5$. 
In this case we have
$$
y^{(5)}(t)+\sum_{i=0}^{4}(a_i+r_i(t))y^{(i)}(t)=0\,.
$$
In order to reduce the equation for $z$, we will use the formula \eqref{poincaz}. The complete Bell polynomials can be computed recursively using the relation \eqref{complebell}. Thus, from \eqref{bellpols} 
and Theorem \ref{theo1} one has that 
 \begin{equation*}
 \begin{split}
 P(r(t);\lambda)&=r_4(t)\lambda^4+ r_3(t)\lambda^3+r_2(t)\lambda^2+r_1(t)\lambda+r_0(t) \,, \\
\mathcal{ D}z(t) &= z^{(4)}(t)+( 5\lambda+ a_4)z^{(3)}+(10\lambda^2+4a_4\lambda+a_3)z^{(2)}(t)  \\
 &\quad +(10\lambda^3+6\lambda^2a_4+3\lambda a_3+a_2)z'(t)+(5\lambda^4+4a_4\lambda^3+3a_3\lambda^2+2a_2\lambda+a_1)z(t)  \\
\mathcal{L}(t,z) &=\, r_4(t)z^{(3)}(t) +\big(4r_4(t)\lambda+r_3(t)\big)z^{(2)}(t)+ (6\lambda^2r_4(t)+3\lambda r_3(t)+r_2(t))z'(t) \\
&\quad \quad +  (4r_4(t)\lambda^3+3r_3(t)\lambda^2+2r_2(t)\lambda+r_1(t))z(t) 
 \end{split}
 \end{equation*}
Thus, if we subtract $z',z^{(2)},z^{(3)},z^{(4)}$ to the four last terms of the computation above, one obtains the non-linear part $\mathcal{F}(t,Z(t)):=\mathcal{F}(t,z,z',z^{(2)},z^{(3)},z^{(4)})$\,. Explicitly 
\begin{equation*}
\begin{split}
\mathcal{F}(t,Z)=&  z^2(t)\sum_{j=0}^{3}{j+2\choose j}(a_{j+2}+r_{j+2}(t))\lambda^j  \\
&\quad +\big(z(t)^3+3z(t)z'(t)\big)\sum_{j=0}^{2}{j+3\choose j}(a_{j+3}+r_{j+3}(t))\lambda^j   \\
&\quad +\big(z(t)^4+6z(t)^2z'(t)+4z(t)z^{(2)}(t)+3z'(t)^2\big)\sum_{j=0}^{1}{j+2\choose j}(a_{j+4}+r_{j+4}(t))\lambda^j \\
&\quad +  z(t)^5+10z(t)^3z'(t)+15z'(t)^2z(t) + 10[z(t)]^2z^{(2)}(t)+10z^{(2)}(t)z'(t)+5z^{(3)}(t)z(t)\,.
\end{split}
\end{equation*}
We simplify notation for $i=2,3,4,5$
$$\tilde{a}_i=\sum_{j=0}^{5-i}{i+j \choose j} a_{i+j}\lambda^j\qquad\text{and}\qquad \tilde{r}_i(t)=\sum_{j=0}^{4-i}{i+j \choose j} \lambda^j r_{i+j}(t).$$
Note that $\tilde a_5=1$ and $\tilde r_4=r_4$. Thus, we have the following integral equation for $z$
\begin{equation}\label{eizn5}
\begin{split}
z(t)= - G\Big[ & P(r;\lambda)+ \mathcal{L}(\cdot,z) +  \tilde a_2 z^2 + 3\tilde a_3 zz' + \tilde a_4 \big(4zz'' + 3[z']^2\big) + 5zz^{(3)} + 10 z'z'' \\
& + \tilde a_3 z^3 + 6\tilde a_4 z^2z' + 10 z^2z'' + 15 z[z']^2 + \tilde r_2 z^2 + 3\tilde r_3 zz' + r_4\big( 4zz'' + 3[z']^2\big)\\
& + \tilde a_4 z^4 + 10 z^3z' + \tilde r_3 z^3 + 6 r_4 z^2z''  + r_4 z^4+ z^5\Big](t) .
\end{split}
\end{equation}

1. Note that $zz'$, $z'z''$, $z^2z'$ and $z^3z'$ are conditionally integrable. Hence, under assumptions \eqref{gpr} and \eqref{cl0} we find the asymptotic formula as $t\to+\infty$
\begin{equation*}
    \begin{split}
        y(t)=(1+o(1))& e^{\lambda(t-t_0)}\\
        \times& \exp\bigg(-\prod_{j=1}^4(\lambda_j-\lambda)^{-1}\int_{t_0}^t [P(r;\lambda)(s)+\mathcal{L}(s,z(s))+\widehat{\mathcal{F}}(s,Z(z))]\,ds\bigg)\,,
    \end{split}
\end{equation*}
where
\begin{equation*}
    \begin{split}
        \widehat{\mathcal{F}}(t,Z)=&\, \tilde a_2 z^2 + \tilde a_4 \big(4z z'' + 3[z']^2\big) + 5zz^{(3)} + \tilde a_3 z^3 + 10 z^2 z'' + 15 z[z']^2 + \tilde r_2(t) z^2 + 3\tilde r_3(t) z z'\\
        & + r_4(t)\big( 4zz'' + 3[z']^2\big) + \tilde a_4 z^4 + \tilde r_3(t) z^3 + 6 r_4(t) z^2 z'' + r_4(t) z^4+ z^5 
    \end{split}
\end{equation*}

\medskip

2. Now, we develop Theorem \ref{hllp} for $n=5$. Assume that $r_i\in L^p[t_0,+\infty)$ for all $i=0,\dots,4$ with $p\in(m,m+1]$ for some $m\in\mathbb{N}$, $m\ge 2$. We know that $z\in L^p[t_0,+\infty)$ and $z=\theta_1 + \psi_1$, where $\theta_1=-G[P(r;\lambda)]$ with $\theta_1^{(i)}\in L^p$ and $\psi_1^{(i)}\in L^{p/2}$ for all $i=0,\dots,4$. In general, suppose that
$$z=\varphi_k+\psi_{k+1},\quad \text{with}\quad \varphi_k=\sum_{l=1}^k\theta_l,\ \ k<m-1,$$
where $\theta_l$ are given by
\begin{equation*}
\theta_2= - G\Big[ \mathcal{L}(\cdot,\theta_1) +  \tilde a_2 \theta_1^2 + 3\tilde a_3 \theta_1\theta_1' + \tilde a_4 \big(4\theta_1\theta_1'' + 3[\theta_1']^2\big) + 5\theta_1\theta_1^{(3)} + 10 \theta_1'\theta_1''\Big]
\end{equation*}
\begin{equation*}
\begin{split}
\theta_3= - G\Big[& \mathcal{L}(\cdot,\theta_2) +  2\tilde a_2 \theta_1\theta_2 + 3\tilde a_3 (\theta_1\theta_2' +\theta_1'\theta_2) + \tilde a_4 \big(4\{\theta_1\theta_2'' + \theta_1''\theta_2\} + 6 \theta_1'\theta_2'\big) \\
&+ 5\{\theta_1\theta_2^{(3)} + \theta_1^{(3)}\theta_2\} + 10( \theta_1'\theta_2'' + \theta_1''\theta_2') + \tilde a_3 \theta_1^3 + 6\tilde a_4 \theta_1^2\theta_1' + 10 \theta_1^2\theta_1'' + 15 \theta_1[\theta_1']^2\\
& + \tilde r_2 \theta_1^2 + 3\tilde r_3 \theta_1\theta_1' + r_4(4\theta_1\theta_1'' + 3[\theta_1']^2)\Big]
\end{split}
\end{equation*}
\begin{equation*}
\begin{split}
\theta_4= - G\Big[& \mathcal{L}(\cdot,\theta_3) + \sum_{i=1}^3 \big(\tilde a_2 \theta_i\theta_{4-i} + 3\tilde a_3 \theta_i\theta_{4-i}' + \tilde a_4 \big\{4 \theta_i\theta_{4-i}'' + 3 \theta_i'\theta_{4-i}'\big\} + 5 \theta_i\theta_{4-i}^{(3)} + 10\theta_i'\theta_{4-i}''  \big)\\
& + \sum_{i+j+n=4}\big(\tilde a_3 \theta_i\theta_j\theta_n + 6\tilde a_4 \theta_i\theta_j\theta_n' + 10 \theta_i\theta_j\theta_n'' + 15 \theta_i\theta_j'\theta_n' \big)+ 2\tilde r_2 \theta_1\theta_2 + 3\tilde r_3\{ \theta_1\theta_2'\\
&+\theta_1'\theta_2\} + \tilde r_4( 4\{\theta_1\theta_2'' + \theta_1'' \theta_2\} + 6\theta_1'\theta_2'  ) + \tilde a_4\theta_1^4 + 10 \theta_1^3\theta_1' + \tilde r_3\theta_1^3 + 6r_4\theta_1^2\theta_1''\Big]
\end{split}
\end{equation*}
and in general, for $l\ge 5$
\begin{equation*}
\begin{split}
\theta_l = - G\Big[& \mathcal{L}(\cdot,\theta_{l-1}) + \sum_{i=1}^{l-1} \big(\tilde a_2 \theta_i\theta_{l-i} + 3\tilde a_3 \theta_i\theta_{l-i}' + \tilde a_4 \big\{4 \theta_i\theta_{l-i}'' + 3 \theta_i'\theta_{l-i}'\big\} \\
&+ 5 \theta_i\theta_{l-i}^{(3)} + 10\theta_i'\theta_{l-i}''  \big) + \sum_{i+j+n=l}\big(\tilde a_3 \theta_i\theta_j\theta_n + 6\tilde a_4 \theta_i\theta_j\theta_n' + 10 \theta_i\theta_j\theta_n'' + 15 \theta_i\theta_j'\theta_n' \big)\\
&+ \sum_{i=1}^{l-2}\big( \tilde r_2 \theta_i\theta_{l-i-1} + 3\tilde r_3  \theta_i\theta_{l-i-1}' + \tilde r_4\{ 4 \theta_i\theta_{l-i-1}'' + 3 \theta_i'\theta_{l-i-1}' \}\big) \\
& + \sum_{i+j+m+n=l}\big( \tilde a_4\theta_i\theta_j\theta_m\theta_n + 10 \theta_i\theta_j\theta_m\theta_n'\big) + \sum_{i+j+n=l-1} \big(\tilde r_3\theta_i\theta_j\theta_n + 6r_4\theta_i\theta_j\theta_n''\big)\\
& + \sum_{i+j+m+n=l-1} \tilde r_4\theta_i\theta_j\theta_m\theta_n +\sum_{i+j+m+n+q=l}\theta_i\theta_j\theta_m\theta_n\theta_q\Big]
\end{split}
\end{equation*}
$\theta_l^{(i)}\in L^{p/l}$ and $\psi_{k+1}^{(i)}\in L^{p/(k+1)}$ for all $i=0,\dots,4$. Replacing it in \eqref{eizn5}, we get that $$\mathcal{L}(\cdot,\varphi_k+\psi_{k+1})=\mathcal{L}(\cdot,\varphi_k ) +\mathcal{L}(\cdot, \psi_{k+1}) = \sum_{l=1}^k\mathcal{L}(\cdot,\theta_l ) +\mathcal{L}(\cdot, \psi_{k+1}),$$ 
$\mathcal{L}(\cdot, \theta_{l}) \in L^{p/(l+1)}$, $\mathcal{L}(\cdot, \psi_{k+1}) \in L^{p/(k+2)}$ and $\mathcal{F}(\cdot,\varphi_k+\psi_{k+1}) - \mathcal{F}(\cdot,\varphi_k)\in L^{p/(k+2)}$. Hence, we compute $\mathcal{F}(\cdot,\varphi_k)$ by using the definition of $\varphi_k$. Note that
$$\varphi_k^2=\sum_{l=1}^k\underbrace{\sum_{i=1}^{l} \theta_i\theta_{l-i+1}}_{\in L^{p/(l+1)}} + \underbrace{\sum_{i+j>k+1}\theta_i\theta_j}_{\in L^{p/(k+2)}},\qquad \varphi_k\varphi_k'=\sum_{l=1}^k\underbrace{\sum_{i=1}^{l} \theta_i\theta_{l-i+1}'}_{\in L^{p/(l+1)}} + \underbrace{\sum_{i+j>k+1}\theta_i\theta_j'}_{\in L^{p/(k+2)}},$$
and similar expressions for remain terms of degree 2: $\varphi_k\varphi_k''$, $[\varphi_k']^2$, $\varphi_k\varphi_k^{(3)}$ and $\varphi_k'\varphi_k''$ can be found. For terms of degree 3, we obtain that
\begin{equation*}
    \begin{split}
        \varphi_k^3=&\sum_{l=2}^{k} \underbrace{\sum_{i+j+n=l+1}\theta_i\theta_{j}\theta_n}_{\in L^{p/(l+1)}} + \underbrace{\sum_{i+j+n>k+1}\theta_i\theta_j\theta_n}_{\in L^{p/(k+2)}},\\
        \varphi_k^2\varphi_k'=&\sum_{l=2}^{k}\underbrace{ \sum_{i+j+n=l+1}\theta_i\theta_{j}\theta_n'}_{\in L^{p/(l+1)}} + \underbrace{ \sum_{i+j+n>k+1}\theta_i\theta_j\theta_n'}_{\in L^{p/(k+2)}},
    \end{split}
\end{equation*}
and similar expressions for remain terms of degree 3: $\varphi_k^2\varphi_k'$ and $\varphi_k[\varphi_k']^2$ can be found. Also, we have that
$$\tilde r_2 \varphi_k^2=  \sum_{l=2}^k\underbrace{\sum_{i=1}^{l-1} \tilde r_2\theta_i\theta_{l-i} }_{\in L^{p/(l+1)}}+ \underbrace{ \tilde r_2 \sum_{i+j>k+1}\theta_i\theta_j}_{\in L^{p/(k+2)}},$$
and similar for $\tilde r_3\varphi_k\varphi_k'$ and $r_4\{4\varphi_k\varphi_k'' + 3[\varphi_k']^2\}$ can be found. Next term of degree 4 is
$$\varphi_k^4=\sum_{l=3}^{k}\underbrace{ \sum_{i+j+m+n=l+1}\theta_i\theta_{j}\theta_m\theta_n}_{\in L^{p/(l+1)}} + \underbrace{ \sum_{i+j+m+n>k+1}\theta_i\theta_j\theta_m\theta_n}_{\in L^{p/(k+2)}} $$
and similar for $\varphi_k^3\varphi_k'$; also,
$$\tilde r_3 \varphi_k^3=\sum_{l=3}^{k}\underbrace{ \sum_{i+j+n=l}\tilde r_3\theta_i\theta_{j}\theta_n}_{\in L^{p/(l+1)}} + \underbrace{\tilde r_3 \sum_{i+j+n>k+1}\theta_i\theta_j\theta_n}_{\in L^{p/(k+2)}}$$
and similar expression can be found for $r_4 \varphi_k^2\varphi_k''$. The last two terms of degree 5 are
\begin{equation*}
    \begin{split}
        \varphi_k^5=&\sum_{l=4}^{k}\underbrace{ \sum_{i+j+m+n+q=l+1}\theta_i\theta_{j}\theta_m\theta_n\theta_q}_{\in L^{p/(l+1)}} + \underbrace{\sum_{i+j+m+n+q>k+1}\theta_i\theta_j\theta_m\theta_n\theta_q}_{\in L^{p/(k+2)}} \\
        r_4 \varphi_k^4 = & \sum_{l=4}^{k}\underbrace{ \sum_{i+j+m+n=l}r_4\theta_i\theta_{j}\theta_m\theta_n}_{\in L^{p/(l+1)}} + \underbrace{r_4 \sum_{i+j+m+n>k+1}\theta_i\theta_j\theta_m\theta_n}_{\in L^{p/(k+2)}}
    \end{split}
\end{equation*}
Then, after tedious computations we obtain that
\begin{equation*}
    \begin{split}
        \mathcal{F}(\cdot,\varphi_k)=&\,(\text{terms in $L^{p/2}$, $l=1$}:)\, \Big\{ \tilde a_2 \theta_1^2 + 3\tilde a_3 \theta_1\theta_1' + \tilde a_4 \big(4\theta_1\theta_1'' + 3[\theta_1']^2\big) + 5\theta_1\theta_1^{(3)} + 10 \theta_1'\theta_1''\Big\}\\[0.4cm]
        &+(\text{terms in $L^{p/3}$, $l=2$}:)\,\Big\{ 2\tilde a_2 \theta_1\theta_2 + 3\tilde a_3 (\theta_1\theta_2' +\theta_1'\theta_2) + \tilde a_4 \big(4\{\theta_1\theta_2'' + \theta_1''\theta_2\} + 6 \theta_1'\theta_2'\big) \\
&+ 5\{\theta_1\theta_2^{(3)} + \theta_1^{(3)}\theta_2\} + 10( \theta_1'\theta_2'' + \theta_1''\theta_2') + \tilde a_3 \theta_1^3 + 6\tilde a_4 \theta_1^2\theta_1' + 10 \theta_1^2\theta_1'' + 15 \theta_1[\theta_1']^2\\
& + \tilde r_2 \theta_1^2 + 3\tilde r_3 \theta_1\theta_1' + r_4(4\theta_1\theta_1'' + 3[\theta_1']^2)\Big\}\\[0.3cm]
&+ (\text{terms in $L^{p/4}$, $l=3$}:)\,\Big\{ \sum_{i=1}^3 \big(\tilde a_2 \theta_i\theta_{4-i} + 3\tilde a_3 \theta_i\theta_{4-i}' + \tilde a_4 \big\{4 \theta_i\theta_{4-i}'' + 3 \theta_i'\theta_{4-i}'\big\} \\
&+ 5 \theta_i\theta_{4-i}^{(3)} + 10\theta_i'\theta_{4-i}''  \big) + \sum_{i+j+n=4}\big(\tilde a_3 \theta_i\theta_j\theta_n + 6\tilde a_4 \theta_i\theta_j\theta_n' + 10 \theta_i\theta_j\theta_n'' + 15 \theta_i\theta_j'\theta_n' \big)\\
&+ 2\tilde r_2 \theta_1\theta_2 + 3\tilde r_3\{ \theta_1\theta_2'+\theta_1'\theta_2\} + \tilde r_4( 4\{\theta_1\theta_2'' + \theta_1'' \theta_2\} + 6\theta_1'\theta_2'  ) \\
& + \tilde a_4\theta_1^4 + 10 \theta_1^3\theta_1' + \tilde r_3\theta_1^3 + 6r_4\theta_1^2\theta_1'' \Big\}\\[0.3cm]
        &+(\text{terms in $L^{p/l}$, $l=5,\dots,k+1$}:)\,\Big\{\sum_{l=5}^{k+1}\bigg[ \sum_{i=1}^{l-1} \big(\tilde a_2 \theta_i\theta_{l-i} + 3\tilde a_3 \theta_i\theta_{l-i}' \\
        &+ \tilde a_4 \big\{4 \theta_i\theta_{l-i}'' + 3 \theta_i'\theta_{l-i}'\big\} + 5 \theta_i\theta_{l-i}^{(3)} + 10\theta_i'\theta_{l-i}''  \big) \\
        &+ \sum_{i+j+n=l}\big(\tilde a_3 \theta_i\theta_j\theta_n + 6\tilde a_4 \theta_i\theta_j\theta_n' + 10 \theta_i\theta_j\theta_n'' + 15 \theta_i\theta_j'\theta_n' \big)\\
&+ \sum_{i=1}^{l-2}\big( \tilde r_2 \theta_i\theta_{l-i-1} + 3\tilde r_3  \theta_i\theta_{l-i-1}' + \tilde r_4\{ 4 \theta_i\theta_{l-i-1}'' + 3 \theta_i'\theta_{l-i-1}' \}\big) \\
& + \sum_{i+j+m+n=l}\big( \tilde a_4\theta_i\theta_j\theta_m\theta_n + 10 \theta_i\theta_j\theta_m\theta_n'\big) + \sum_{i+j+n=l-1} \big(\tilde r_3\theta_i\theta_j\theta_n + 6r_4\theta_i\theta_j\theta_n''\big)\\
&+\sum_{i+j+m+n+q=l}\theta_i\theta_j\theta_m\theta_n\theta_q + \sum_{i+j+m+n=l-1} \tilde r_4\theta_i\theta_j\theta_m\theta_n \bigg] \Big\} +\text{terms in $L^{p/(k+2)}$}.
    \end{split}
\end{equation*}

\noindent Therefore, we conclude that
\begin{equation*}
    \begin{split}
        -&G\Big[\mathcal{L}(\cdot,\varphi_k+\psi_{k+1}) + \mathcal{F}(\cdot,\varphi_k+\psi_{k+1})\Big]\\
        =&\, -G\Big[\mathcal{L}(\cdot,\varphi_k) + \mathcal{F}(\cdot,\varphi_k)\Big] -G\Big[\mathcal{L}(\cdot,\psi_{k+1}) + \mathcal{F}(\cdot,\varphi_k+\psi_{k+1}) -  \mathcal{F}(\cdot,\varphi_k)\Big]\\
        =&\, \sum_{l=2}^{k+1}\theta_l + \text{terms in $L^{p/(k+2)}$},
    \end{split}
\end{equation*}
so that, $\displaystyle z=\sum_{i=1}^{m}\theta_i + \psi_{m+1}$. Thus, we have deduced asymptotic formula in Theorem \ref{hllp}.

\medskip
3. In particular, if $m=2$, that is to say $p\in(2,3]$, then $z=\theta_1+\theta_2+\psi_3$ with \linebreak $\theta_1=-G[P(r;\lambda)]$ and
\begin{equation*}
\theta_2= - G\Big[ \mathcal{L}(\cdot,\theta_1) +  \tilde a_2 \theta_1^2 + 3\tilde a_3 \theta_1\theta_1' + \tilde a_4 \big(4\theta_1\theta_1'' + 3[\theta_1']^2\big) + 5\theta_1\theta_1^{(3)} + 10 \theta_1'\theta_1''\Big],
\end{equation*}
$\theta_l^{(i)}\in L^{p/l}$, $l=1,2$ and $\psi_3\in L^1$. Integrating $\theta_1$ and $\theta_2$ and taking into account that $\theta_1\theta_1'$ and $\theta_1'\theta_1''$ are conditionally integrable, we find the following asymptotic formula in terms of $\theta_1$
\begin{equation*}
    \begin{split}
        y(t)=(1&+o(1)) e^{\lambda(t-t_0)}\exp\bigg(-\prod_{j=1}^4(\lambda_j-\lambda)^{-1}\int_{t_0}^t \big[P(r;\lambda)(s) + \mathcal{L}(s,\theta_1(s)) + \tilde a_2 (\theta_1(s))^2\big]\bigg)\\   
        \times&\exp\bigg(-\prod_{j=1}^4(\lambda_j-\lambda)^{-1}\int_{t_0}^t \big[\tilde a_4(4\theta_1(s)\theta_1''(s)+3[\theta_1'(s)]^2) + 5\tilde a_5 \theta_1(s)\theta_1^{(3)}(s) \big]\,ds\bigg)\,,
    \end{split}
\end{equation*}

\subsection{An explicit equation of degree 5}

\hfill

\medskip
\noindent Consider the equation 
$$
y^{(5)}+(r_3(t)-5)y^{(3)}+(r_1(t)+4)y^{(1)}+r_0(t)y=0
$$

Note that the polynomial $P(a,x)$ associated to this equations corresponds to
$$
P(a;x) = x^5-5x^3+4x = x(x-1)(x+1)(x+2)(x-2)
$$
Let us denote by $\lambda$ a root of $P(a,x)$. Note that in this case we have $a_4=a_2=a_0=0$\,.

We make the change of variable $z=y^{'}/y-\lambda$ and using the previous subsection we obtain the following equation
%


\begin{equation}
\begin{split}
z(t) =  & -G\Big[  P(r;\lambda)+ \mathcal{L}(\cdot,z) +  (10\lambda^3-15\lambda) z^2 + (30\lambda^2-15) zz' + 5\lambda \big(4zz'' + 3[z']^2\big)\\ 
&  + 5zz^{(3)}+ 10 z'z'' + (30\lambda^2-15)z^3 + 30\lambda z^2z' + 10 z^2z'' + 15 z[z']^2 + 3\lambda r_3 z^2 \\
&  + 3r_3 zz' + r_4\big( 4zz'' + 3[z']^2\big)+ 5\lambda z^4 + 10 z^3z' + r_3 z^3 + 6 r_4 z^2z'' + z^5 + r_4 z^4\Big](t) .
\end{split}
\end{equation}

where $P(r;\lambda)= r_3(t)\lambda^3+r_1(t)\lambda+r_0(t)$
and  $\mathcal{L}(\cdot,z) = r_3(t)z^{(2)}(t)+ (3\lambda r_3(t)+r_3(t))z'(t)+  (3r_3(t)\lambda^2+r_1(t))z(t)$

In order to apply Theorem \ref{orederzun}, we estimate the left hand side of inequality \eqref{cl0}. We can compute
$$
\begin{aligned}
\frac{1}{1!}\frac{\partial}{\partial x}P(r(t);\lambda) &= 3r_3(t)\lambda^2+r_1(t)\,, &
\frac{1}{2!}\frac{\partial^2}{\partial x^2}P(r(t);\lambda) &= 3r_3(t)\lambda\,, \\
\frac{1}{3!}\frac{\partial^3}{\partial x^3}P(r(t);\lambda) &= r_3(t)\,, &
\frac{1}{4!}\frac{\partial^4}{\partial x^4}P(r(t);\lambda) &= 0\,. \\
\end{aligned}
$$
Using the notation of section 2 $\gamma_j=\lambda_j-\lambda$\,. For instance, if we consider the case $\lambda=0$ we obtain that $\gamma_1=1$, $\gamma_2=-1$, $\gamma_3=2$ and $\gamma_4=-2$\,, then $\tilde\alpha_1=\tilde\alpha_2=4$ and $\tilde\alpha_3=\tilde\alpha_4=15$, and $|\Gamma_1|=|\Gamma_2|=6$ and $|\Gamma_3|=|\Gamma_4|=6$\,.

In general, if $\lambda\in\{0,1,-1,2,-2 \}$ we can see that  $|\alpha_j|=|\Re{\gamma_j}|\leq 4$ for all $j$\,. Hence $\tilde\alpha_j\leq 85$ for $j=0,1,2,3,4$\,. Also, $|\Gamma_j|\geq1$, hence we obtain that
$$
\frac{\tilde\alpha_j}{|\Gamma_j|}\leq 85 \qquad j=0,1,2,3,4\,.
$$

Therefore, we obtain
\begin{equation*}
\begin{split}
 	\sum_{j=1}^{n-1}\sum_{k=1}^{n-1} \frac{\tilde{\alpha}_j}{k! \left|\Gamma_j\right|}  \bigg\| I_{\alpha_j}\Big[ \frac{\partial^k}{\partial x^k}P(r(\cdot);\lambda) \Big] \bigg\|_\infty 
	& \leq
	85\sum_{j=0}^4 |3\lambda^2+3\lambda+1|\big\| I_{\alpha_j}\big(r_3\big) \big\|_\infty+\big\|I_{\alpha_j}\big(r_1\big)\big\|_\infty \\
	&\leq
	85\sum_{j=0}^4 19\big\| I_{\alpha_j}\big(r_3\big) \big\|_\infty+\big\|I_{\alpha_j}\big(r_1\big)\big\|_\infty\,,
\end{split}
\end{equation*}

since, for any root $\lambda$ of $P(a;\lambda)$, we have that $|3\lambda^2+3\lambda+1| \leq 19$\,. Thus, the theorem \ref{orederzun} holds if
\begin{equation}\label{examexte}
  \begin{aligned}
  \sum_{i=0}^{n-2}\big|G[P(r;\lambda)]^{(i)}(t)\big|\to 0\quad\text{as }\, t\to +\infty\,, \text{and } \\
   85\sum_{j=0}^4 19\big\| I_{\alpha_j}\big(r_3\big) \big\|_\infty+\big\|I_{\alpha_j}\big(r_1\big)\big\|_\infty <1\,.
  \end{aligned}
\end{equation}

Also note that for any election of $\lambda$ one has that $1\leq|\alpha_j|$\,, hence the second part of theorem \ref{orederzun} holds if there exists a $0<\beta<1$ such that 

\begin{equation}\label{examdos}
85\sum_{j=0}^4 19\big\| I_{\alpha_j-\sgn(\alpha_j)\beta}\big(r_3\big) \big\|_\infty+\big\|I_{\alpha_j-\sgn(\alpha_j)\beta}\big(r_1\big)\big\|_\infty < \frac{1}{2}
\end{equation}
Note that, if  $r_i\to 0$ as $t\to \infty$, for $i=1,2$ one can guarantee the existence of $t_0$ such that inequalities \eqref{examexte} and \eqref{examdos} hold. 
Moreover, under this hypothesis also theorem \ref{base} holds, i.e., there exists a fundamental system of solutions $y_i$, $i=1,2,3,4,5$ of equation  \eqref{poinca} such that
 $$
\frac{y_i^{(j)}(t)}{y_i(t)}=\lambda_i^{j} + O\big((I_{\beta}+I_{-\beta})[\lambda_i^3r_3+\lambda_i r_1+r_0](t)\big)
	\quad \text{for } j=0,1,\dots,n-2\,.
$$

On the other hand, if also it holds that  $\lambda_i^3r_3+\lambda_i r_1+r_0\in L^1[t_0,\infty)$ (in particular, if $r_i\in L^1[t_0,\infty)$\,, for $i=0,1,3$) one can apply Theorem \ref{pplev} to obtain
$$
y_i(t)=\bigg[1+O\left(\int_{t}^{+\infty}(I_{\beta}+I_{-\beta})[\lambda_i^3r_3+\lambda_i r_1+r_0](s)\, ds \right)\bigg] e^{\lambda_i(t-t_0)}.
$$
Note that $|I_{\pm\beta}[P(r,\lambda)](t)|\leq P(r(t),\lambda)$\,, for $i=1,2$. Thus, $I_{\pm\beta}[r_i]\in L^1$ and one can  rewrite the previous equation just by
$$
y_i(t)=(1+O(1))e^{\lambda_i(t-t_0)}\,.
$$

\medskip 
An interesting case is the following case $r_{3}(t)=\frac{1}{\sqrt[3]{t^2}}, r_{1}(t)=\frac{1}{\sqrt[3]{t^2+1}}$ and $r_{0}=r_{1} $\,.

We  consider $r_{i} \in L^{2}[t_0,\infty)$,
$i=0,1,2$\,, thus  $P(r,\lambda)\in L^{2}[t_0,\infty)$\,. Moreover, the hypothesis of Theorem \ref{teots} are satisfied. In order to obtain an asymptotic formula like the one in Theorem \ref{teots}, first we check the following properties of our equation. In this case one has that 
$$
P(r(t);\lambda) \to 0 \;\mbox{ then }\; \theta(t)= G_{\gamma_j}[P(r;\lambda)](t)\to 0 \,,\;\mbox{ as }t\to\infty\,.
$$
Moreover, since $|\theta(t)|=|G_{\gamma_j}[P(r;\lambda)](t)|\leq |P(|r(t)|,|\lambda|)|$, one has that $\theta\in L^2[t_0,\infty)$. Indeed, at least one has that $\theta$ is $O(t^{-2/3})$. Now, since 
$$
\begin{aligned}
R(\cdot,\theta)&= \mathcal{L}(\cdot,\theta) +  (10\lambda^3-15\lambda) \theta^2 + (30\lambda^2-15) \theta\theta' + 5\lambda \big(4\theta\theta'' + 3[\theta']^2\big)\\ 
&  + 5\theta\theta^{(3)}+ 10 \theta'\theta'' + (30\lambda^2-15)\theta^3 + 30\lambda \theta^2\theta' + 10 \theta^2\theta'' + 15 \theta[\theta']^2 + 3\lambda r_3 \theta^2 \\
&  + 3r_3 \theta\theta' + r_4\big( 4\theta\theta'' + 3[\theta']^2\big)+ 5\lambda \theta^4 + 10 \theta^3\theta' + r_3 \theta^3 + 6 r_4 \theta^2\theta'' + \theta^5 + r_4 \theta^4 \,,
\end{aligned}
$$
thus $R(\cdot,\theta)=O(t^{-4/3})$\,. With this information, the result of Theorem \ref{teots} implies that
\begin{equation*}
        y(t)= (1+\varepsilon(t))e^{\lambda(t-t_0)} \exp\left(-\prod_{k=1}^{4}(\lambda_k-\lambda)^{-1}\int_{t_0}^tP(r;\lambda)(s)\,ds\right),
\end{equation*}
with 
$$
\varepsilon(t)=O\bigg(\sum_{j=1}^{n-1} I_{\alpha_j}[\mathcal{L}(\cdot,u)+ \tilde{\mathcal{F}}(\cdot ,U)] +\int_t^{+\infty}\left[\big|\mathcal{L}(s,u(s))\big|+ \big|\tilde{\mathcal{F}}(s,U(s))\big|\right]ds\bigg).
$$

Also, the the function $z-\theta$ satisfies the equation \eqref{eu} and $u^{(i)}=O\big((I_\beta+I_{\beta})[R(\cdot,\theta )]\big))$   for $i=0,1,\dots,(n-2)$\,. It follows that, at least, $u^{(i)}=O(t^{-4/3})$\,. Moreover, we can conclude also that 
$
\mathcal{L}(t,u(t))=O(t^{-2}),
$
and $\tilde{\mathcal{F}}(\cdot,U)=O(t^{-2})$
$$
\varepsilon(t)=O\big(1/t)\,.
$$
therefore, we have that 
$$
\begin{aligned}
  y(t)= (1+O(1/t))ce^{\lambda(t-t_0)} &\exp\left(\prod_{k=1}^{4}(\lambda_k-\lambda)^{-1}\left(\frac{\lambda^3+1}{3}\right)\sqrt[3]{t} \right)
  \\
  \times& \exp\left(-\prod_{k=1}^{4}(\lambda_k-\lambda)^{-1}\frac{\lambda}{8}\left(t \sqrt{t^{2}+1}\left(2 t^{2}+5\right)+3 \ln(t+\sqrt{t^2+1})\right) \right)
  \end{aligned}
$$
where $c$ is a constant.
For example, considering $\lambda=1$ we obtain that 
$$
  y(t)= (1+O(1/t))ce^{\lambda(t-t_0)} \exp\left(\frac{\sqrt[3]{t}}{9} +\frac{t \sqrt{t^{2}+1}\left(2 t^{2}+5\right)}{48} \right)\left(t+\sqrt{t^2+1} \right)^{1/16}\,.
$$

\bigskip
\begin{center}
{\bf Acknowledgements}    
\end{center}
The second author has been supported by grant Fondecyt Regular N$^\circ$ 1201884. The third author has been supported by grant Fondecyt Regular N$^\circ$ 1170466.

\bigskip

\end{document}